\newcommandx{\jow}[2][1=]{\todo[linecolor=orange,backgroundcolor=orange!25,bordercolor=orange,#1]{#2}}
\newcommandx{\mateus}[2][1=]{\todo[linecolor=blue,backgroundcolor=blue!25,bordercolor=blue,#1]{#2}}
\def\R{\mathbb{R}}
\def\N{\mathbb{N}}
\newcommand{\eps}{\varepsilon}
\renewcommand{\H}{\mathcal{H}}
\newcommand{\AC}{\mathcal{A}}
\newcommand{\BC}{\mathcal{B}}
\newcommand{\CC}{\mathcal{C}}
\newcommandx{\eq}{\approxeq}
\newcommand{\supp}{\mathrm{supp}}
\renewcommand{\a}{\mathbf{a}}
\renewcommand{\b}{\mathbf{b}}
\newtheorem{theorem}{Theorem}
\newtheorem{corollary}[theorem]{Corollary}
\newtheorem{remark}[theorem]{Remark}
\newtheorem{prop}[theorem]{Proposition}
\newtheorem{lemma}[theorem]{Lemma}
\newtheorem{example}[theorem]{Example}
\numberwithin{equation}{section}
\numberwithin{theorem}{section}
\begin{document}
\title[General stability for the Pr\'ekopa--Leindler inequality]{A quantitative stability result for the Pr\'ekopa--Leindler inequality for arbitrary measurable functions}

\author{K\'aroly J. B\"or\"oczky, Alessio Figalli and Jo\~ao P. G. Ramos}
\address{Alfr\'ed R\'enyi Institute of Mathematics,
Re\'altanoda street 13-15, H-1053, Budapest, Hungary}
\email{boroczky.karoly.j@renyi.hu}
\address{ETH Z\"urich, Department of Mathematics, R\"amistrasse 101, 8092, Z\"urich, Switzerland}
\email{alessio.figalli@math.ethz.ch}
\address{ETH Z\"urich, Department of Mathematics, R\"amistrasse 101, 8092, Z\"urich, Switzerland}
\email{joao.ramos@math.ethz.ch}

\begin{abstract} We prove that if a triplet of functions satisfies almost equality in the Pr\'ekopa--Leindler inequality, then these functions are close to a common log-concave function, up to multiplication and rescaling.
Our result holds for general measurable functions in all dimensions,
and provides a quantitative stability estimate with computable constants.
\end{abstract}

\maketitle

\section{Introduction} 

\subsection{Brunn-Minkowski and Pr\'ekopa-Leindler inequalities}
Writing $|X|$ to denote Lebesgue measure of a measurable subset $X$ of $\R^n$
(with $|\emptyset|=0$), the Brunn-Minkowski{-Lusternik} inequality 
states that if $\alpha,\beta>0$ and $A,B,C$ are bounded measurable subsets of $\R^n$
with $\alpha A+\beta B \subset C$,\footnote{By convention, if one of the sets $A$ or $B$ is empty, then $\alpha A+\beta B:=\emptyset$.} then
\begin{equation}
\label{BrunnMinkowski}
|C|^{\frac1n}\geq \alpha |A|^{\frac1n}+\beta|B|^{\frac1n}.
\end{equation}
Also, in the case when $|A|>0$ and $|B|> 0$, equality holds if and only if there exist a convex body $K$ (that is, a convex compact set with nonempty interior),
constants $a,b>0$, and vectors $x,y\in\R^n$, such that $\alpha a+\beta b=1$, $\alpha x+\beta y=0$, and
\begin{equation}
\label{BrunnMinkowski-equa}
A\subset a K+x, \mbox{ \ \ }B\subset b K+y, \mbox{ \ \ } |(a K+x)\backslash A|=0,\mbox{ \ \ }
 |(b K+y)\backslash B|=0, \mbox{ \ and \ }
 |K\Delta C|=0,
\end{equation}
where $K\Delta C$ stands for the symmetric difference between $K$ and $C$. 
We note that 
even if $A$ and $B$ are Lebesgue measurable, the Minkowski linear combination 
$\alpha A+\beta B$ may not be measurable (while $\alpha A+\beta B$ is measurable if $A$ and $B$ Borel).
We refer to the monograph \cite{Sch14} for a detailed exposition on this beautiful topic.

\smallskip

The Pr\'ekopa-Leindler inequality is a functional generalization of the classical Brunn-Minkowski inequality.
In order to state it precisely, we recall that a function 
$f: \R^n \to \R_{\ge 0}$ is said to be log-concave if $f\left((1-\lambda)x+\lambda y\right) \ge f(x)^{1-\lambda} f(y)^{\lambda}$  for all $x,y \in \R^n$ and $\lambda\in(0,1)$; in other words, $f$ is log-concave if it can be written as $f=e^{-\varphi}$
for some convex function $\varphi: \R^n \to (-\infty,\infty]$.

\begin{theorem}[Pr\'ekopa, Leindler; Dubuc]
\label{PL}
Let $\lambda\in(0,1)$ and $f,g,h : \R^n \to \R_{\ge 0}$ be measurable functions such that
\begin{equation}\label{eq:condition}
h\left((1-\lambda)x+\lambda y\right) \ge f(x)^{1-\lambda} g(y)^{\lambda} \qquad \forall\,x,y\in \R^n.
\end{equation}
Then
\begin{equation}
\label{PL-formula}
\int_{\R} h \ge \left(\int_{\R} f\right)^{1-\lambda} \left(\int_{\R} g\right)^{\lambda}.
\end{equation}
Also, equality holds if and only if there exist $a > 0$, $w \in \R^n$, and a log-concave function $\tilde{h}$, such that
$h=\tilde{h}$,  $f = a^{-\lambda} \tilde{h}(\cdot - \lambda\,w)$, 
$g = a^{1-\lambda} \tilde{h}(\cdot + (1-\lambda)w)$ almost everywhere. 
\end{theorem}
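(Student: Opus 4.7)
The plan is to establish~\eqref{PL-formula} by induction on the dimension $n$, and then to extract the equality characterization by tracing equality backwards through each step of the proof.

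\textbf{The inequality.} In dimension one, after normalizing $f$ and $g$ by their $L^\infty$ norms (and rescaling $h$ by $\|f\|_\infty^{1-\lambda}\|g\|_\infty^{\lambda}$ so that~\eqref{eq:condition} is preserved), the pointwise bound immediately yields the superlevel-set inclusion $(1-\lambda)\{f>t\}+\lambda\{g>t\}\subset\{h>t\}$ for every $t\in(0,1)$. Applying the one-dimensional Brunn--Minkowski inequality~\eqref{BrunnMinkowski} to this inclusion, integrating in $t$ via the layer-cake formula, and finally invoking the AM--GM inequality delivers~\eqref{PL-formula} in $n=1$. For $n\ge 2$, I would write $\R^n=\R^{n-1}\times\R$ and apply the induction hypothesis on slices at heights $x_n$, $y_n$, $(1-\lambda)x_n+\lambda y_n$ to obtain a one-dimensional Pr\'ekopa--Leindler condition for the marginals $x_n\mapsto \int_{\R^{n-1}} f(\cdot,x_n)$ (and analogously for $g$ and $h$), then close via Fubini and the $n=1$ case. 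The measurability of these marginals is a standard technical subtlety, handled either by truncation and monotone limits or by replacing $f,g,h$ with suitable semicontinuous envelopes, as in Dubuc's work.

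\textbf{The equality characterization.} Here I would argue backwards through the above proof, exploiting that equality in~\eqref{PL-formula} must propagate as equality through every intermediate step. In one dimension, this forces equality in AM--GM (so after normalization $\int f=\int g$) together with equality in the one-dimensional Brunn--Minkowski inequality at almost every superlevel set, which in turn forces $\{f>t\}$, $\{g>t\}$, $\{h>t\}$ to coincide (up to null sets) with intervals that are homothetic translates of a single shape, with a priori $t$-dependent shift $w(t)$ and scale $a(t)$. The main obstacle, and really the heart of the equality case, is to show that these parameters can be chosen independently of $t$. One route is to track a specific point (for example the left endpoint) of each superlevel set and show that it depends affinely on the level in a manner consistent with a single log-concave shape; an alternative is to compare $h$ with the optimal log-concave candidate $\tilde h$ built from the level data, and to argue that any deviation would produce a strict improvement in~\eqref{PL-formula}, contradicting equality.

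The inductive lifting to arbitrary $n$ then proceeds by applying this one-dimensional characterization both to the $(n-1)$-dimensional slices (which, by the induction hypothesis, are already log-concave translates of a common shape) and to the vertical marginals (which pin down the translation in the $x_n$ direction), assembling the global $\tilde h$, $a$, and $w$ of the statement. The central difficulty throughout the equality argument is that $f$, $g$, $h$ are merely measurable, so the level-set information must be promoted to a pointwise almost-everywhere statement without the help of continuity --- precisely the issue first settled by Dubuc.
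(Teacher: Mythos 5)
This statement is classical background for the paper: the authors do not prove Theorem~\ref{PL} but cite Pr\'ekopa, Leindler and Borell for the inequality and Dubuc for the equality case, so there is no in-paper proof to compare against. Judged on its own, the first half of your proposal is fine: normalizing so that $\|f\|_\infty=\|g\|_\infty=1$ (truncating first if these are infinite), using the level-set inclusion $(1-\lambda)\{f>t\}+\lambda\{g>t\}\subset\{h>t\}$ for $t\in(0,1)$, integrating the one-dimensional Brunn--Minkowski inequality \eqref{BrunnMinkowski} in $t$, applying AM--GM, and then tensorizing by induction via Fubini on marginals is exactly the standard proof of \eqref{PL-formula}, and the measurability of the marginals is indeed harmless since you never form a sup-convolution.

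The equality characterization, however, is where your text stops being a proof and becomes a plan, and the unproved step is precisely the content of Dubuc's theorem. Equality forces, for a.e.\ $t\in(0,1)$, equality in one-dimensional Brunn--Minkowski \emph{at equal levels}, which only tells you that $\{f>t\}$ and $\{g>t\}$ are essentially intervals and that $\{h>t\}$ essentially equals their Minkowski combination; in dimension one every interval is trivially a homothet of every other, so this carries no ``common shape'' information, and it does not by itself relate $f$ and $g$ across different levels. To pin down that $f$ and $g$ are a common log-concave profile up to the translation $w$ and factor $a$, one must exploit the cross-level inclusions coming from \eqref{eq:condition} at levels $r\neq s$ (i.e.\ $(1-\lambda)\{f>r\}+\lambda\{g>s\}\subset\{h>r^{1-\lambda}s^\lambda\}$), which lead to three-/four-point inequalities for the endpoint functions in the variable $\log t$ and force their concavity/affine relation --- exactly the mechanism the paper later quantifies in its stability analysis. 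You explicitly flag this level-independence of $w(t)$ and $a(t)$ as ``the heart of the equality case'' and then offer two possible routes without carrying out either; likewise, the inductive lifting leaves open why the slicewise translations and scalings are consistent (affine) across slice heights. As written, the ``only if'' direction of the theorem is therefore not established; you would need to actually execute one of the proposed routes (or cite Dubuc, as the paper does) to close it.
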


Note that, if $f,g,h$ are the indicator functions of some sets $A,B,C$, then Theorem~\ref{PL} corresponds exactly to the Brunn-Minkowski inequality.

The Pr\'ekopa-Leindler inequality, due to
Pr\'ekopa \cite{Pre71} and Leindler \cite{Lei72} in dimension one, was generalized in
 Pr\'ekopa \cite{Pre73} and  Borell \cite{Bor75} to any dimension ({\it cf.} Marsiglietti \cite{Mar17}, Bueno, Pivovarov \cite{BuP}, Brascamp, Lieb \cite{BrL76}, Kolesnikov, Werner \cite{KoW}, Bobkov, Colesanti, Fragal\`a \cite{BCF14}). The case of equality is characterized by Dubuc  \cite{Dub77}.
Various applications are provided and surveyed in Gardner \cite{Gar02}.

\subsection{Stability questions}
As discussed above, optimizers are known both for the Brunn-Minkowski and Pr\'ekopa-Leindler inequalities.
However, in spite of knowing the equality cases  for these inequalities, one might ask about what geometric properties can be deduced if one knows that the equality is `almost' attained. This is what one usually refers to as \emph{stability} estimates.

Recently, various important stability results about geometric and functional inequalities have been obtained. For example, Fusco, Maggi, Pratelli \cite{FMP08} proved an optimal stability version of the isoperimetric inequality. This result was extended to the anisotropic isoperimetric inequality and to the Brunn-Minkowski inequality for convex sets by  Figalli, Maggi, Pratelli \cite{FMP09,FMP10} (for the latter problem, the current best estimate is due to Kolesnikov, Milman \cite{KoM}). One can further mention, for instance, stronger versions of the functional Blaschke-Santal\'o inequality, provided by the work of  Barthe, B\"or\"oczky, Fradelizi \cite{BBF14}; of the Borell-Brascamp-Lieb inequality, provided by Ghilli, Salani \cite{GhS17}, Rossi, Salani \cite{RoS17,RoS19} and Balogh, Krist\'aly \cite{BaK18}; of the Sobolev inequality by Figalli, Zhang \cite{FiZ} (extending Bianchi, Egnell \cite{BiE91} and Figalli, Neumayer \cite{FiN19}), 
Nguyen \cite{Ngu16} and Wang \cite{Wan16}; of the log-Sobolev inequality by Gozlan \cite{Goz};
and of some related inequalities by Caglar, Werner \cite{CaW17}, Cordero-Erausquin \cite{Cor17} and Kolesnikov, Kosov \cite{KoK17}. An ``isomorphic" stability result for the Prekopa-Leindler inequality for log-concave functions in terms of the transportation distance has been obtained by Eldan \cite[Lemma~5.2]{Eld13}.

\subsubsection{Stability for Brunn-Minkowski}

About the specific case of the Brunn--Minkowski inequality \eqref{BrunnMinkowski}, the stability question is rather delicate. The first contribution in the direction of stability was made by Freiman \cite{Freiman}, although indirectly, as a consequence of his celebrated $3k-4$ theorem in dimension $n=1$ (see also Christ \cite{Chr12}):

\begin{theorem}[Freiman]
\label{BrunnMinkowskistabn1}
Let $A,B,C\subset\R$ be bounded measurable sets satisfying 
$A+B \subset C$ and $|C|<|A|+|B|+\varepsilon$ for some $\varepsilon \leq \min\{|A|,|B|\}$. Then
there exist intervals $I,J\subset\R$ such that
$A\subset I$, $B\subset J$, $|I\backslash A|<\varepsilon$
and $|J\backslash B|<\varepsilon$.
\end{theorem}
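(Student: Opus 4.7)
The plan is to reduce to compact sets, translate so that the infima of $A$ and $B$ are both at zero, and then combine the one-dimensional Brunn--Minkowski inequality with a two-stage splitting argument: first bounding the length of any individual ``gap'' of $A$ and $B$, and then the total gap length. By inner regularity of Lebesgue measure, I may assume $A$ and $B$ are compact; replacing $C$ with $A+B$ only strengthens the hypothesis, so I take $C = A+B$. After translation, I arrange $\inf A = \inf B = 0$, and write $a := \sup A$, $b := \sup B$, $I := [0,a]$, $J := [0,b]$; the target becomes $a - |A| < \varepsilon$ and $b - |B| < \varepsilon$.

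Step~1 shows that no ``gap'' of $A$ (that is, no maximal open subinterval of $I \setminus A$ lying between two non-empty portions of $A$) has length exceeding $b$, and symmetrically for $B$. Indeed, if some gap $(s, s+g) \subset I \setminus A$ satisfied $g > b$, then writing $A_1 := A \cap [0, s]$ and $A_2 := A \cap [s+g, a]$, the sum-sets $A_1 + B \subset [0, s+b]$ and $A_2 + B \subset [s+g, a+b]$ would be disjoint, so one-dimensional Brunn--Minkowski applied to each piece gives
\[
|A+B| \;=\; |A_1 + B| + |A_2 + B| \;\geq\; (|A_1| + |B|) + (|A_2| + |B|) \;=\; |A| + 2|B|,
\]
contradicting $|A+B| < |A|+|B|+\varepsilon \leq |A|+2|B|$, where the last inequality uses $\varepsilon \leq |B|$.

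Step~2 upgrades this local bound to the claim that $A+B = [0, a+b]$ up to a null set, so that $|A+B| = a + b$. Once known, the hypothesis rearranges to $(a-|A|) + (b-|B|) < \varepsilon$, and since both summands are non-negative, each is strictly less than $\varepsilon$. To prove the filling claim, suppose $y_0 \in [0, a+b] \setminus (A+B)$; the condition $(y_0 - B) \cap A = \emptyset$ then forces $B \cap [y_0 - a, y_0] \subset y_0 - (I \setminus A)$, hence $|B \cap [y_0 - a, y_0]| \leq |I \setminus A|$. A case analysis on the position of $y_0$---in $[0, b]$, $[b, a]$ (assuming $a \geq b$), or $[a, a+b]$---together with the symmetric estimate obtained from the gaps of $B$ and the full strength of $\varepsilon \leq \min(|A|, |B|)$, then yields the desired contradiction.

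The main obstacle is Step~2. In the middle case $y_0 \in [b, a]$, the estimate above reduces to $|B| \leq |I \setminus A|$, which combined with $|B| \geq \varepsilon$ would give $|I \setminus A| \geq \varepsilon$---the opposite of what we want. Breaking this apparent circularity will require iterating over all the gaps of $A$ (and symmetrically of $B$) simultaneously, carefully bounding the pairwise overlaps of the translated sum-sets $A_i + B$ arising from the multi-component decomposition of $A$. Alternatively, following Freiman's original strategy, one can discretize $A$ and $B$ at a small scale, invoke the $3k-4$ theorem for sets of integers, and pass to the limit as the discretization parameter tends to zero.
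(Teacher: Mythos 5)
The paper does not actually prove this statement: it is quoted as Freiman's theorem, with references to Freiman's $3k-4$ paper and to Christ, so your argument has to stand on its own. As you yourself concede, Step~2 is the entire content of the theorem and is left open; but the situation is worse than a missing step, because the intermediate claim you plan to prove there is false. Take $A=B=\{0\}\cup[\tfrac1{10},1]$ and $\varepsilon=\tfrac15$. These are compact, $\inf A=\inf B=0$, $a=b=1$, $|A|=|B|=\tfrac9{10}$, and $C:=A+B=\{0\}\cup[\tfrac1{10},2]$, so $|C|=1.9<|A|+|B|+\varepsilon=2$ while $\varepsilon\le\min\{|A|,|B|\}$. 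Yet $A+B$ omits the interval $(0,\tfrac1{10})$ of positive measure, so $|A+B|<a+b$: the filling claim ``$A+B=[0,a+b]$ up to a null set'' is not a consequence of the hypotheses (the theorem's conclusion of course still holds here, since $a-|A|=\tfrac1{10}<\varepsilon$). The true statement is only that the omitted set has measure at most the deficit $\rho:=|A+B|-|A|-|B|$, and with that weaker version your rearrangement gives $(a-|A|)+(b-|B|)<\varepsilon+\H^1\big([0,a+b]\setminus(A+B)\big)$, which is not enough; moreover, proving even the bound by $\rho$ essentially requires the diameter estimates you are trying to establish. The circularity you notice in the middle case is thus not an artifact of your bookkeeping: holes genuinely can occur near the endpoints of $[0,a+b]$, so no ``contradiction from a hole'' is available there, and the whole architecture of Step~2 (derive a contradiction from any $y_0\notin A+B$) has to be replaced, not just sharpened.

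Step~1 is correct and cleanly argued (the disjointness of $A_1+B$ and $A_2+B$ plus one-dimensional Brunn--Minkowski does give that no gap of $A$ exceeds $b$ in length, and symmetrically), but by itself it is far too weak, and it is never used in a completed deduction. Two smaller points: the reduction to compact sets needs a word of care, since the interval produced for a compact subset $A'\subset A$ need not contain the points of $A\setminus A'$ — one must exhaust $A$ and $B$ by compact subsets containing prescribed points and exploit the strictness of $|C|<|A|+|B|+\varepsilon$ to keep the final inequality strict; and replacing $C$ by $A+B$ is fine only after compactness guarantees measurability of the sumset. Your fallback — discretize at a small scale, apply the discrete $3k-4$ theorem, and pass to the limit — is indeed the standard route (and effectively the one the paper relies on by citation), but it is not carried out, and the discretization/limit step has its own technicalities. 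As it stands, the proposal is a plan with a false pivotal claim rather than a proof.
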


In the planar case, van Hintum, Spink, Tiba \cite{HST22} have found the optimal stability version
 of \eqref{BrunnMinkowski}.
 
\begin{theorem}[van Hintum, Spink, Tiba]
\label{BrunnMinkowskistabn2}
For $\tau\in(0,\frac12]$ and $\lambda\in[\tau,1-\tau]$, let
 $A,B,C$ be bounded measurable subsets of $\R^2$ satisfying
$(1-\lambda)A+\lambda B \subset C$ and
$$
\Big||A|-1\Big|+\Big||B|-1\Big|+\Big||C|-1\Big|<\varepsilon
$$
for some $\varepsilon \leq e^{-M(\tau)}$, with $M(\tau)>0$ depending only on $\tau$.
Then there exists a convex body $K$, {
with $A\subset K+x$ and  $B\subset K+y$  for  some $x,y\in\R^2$,
such that
\begin{equation}
\label{BrunnMinkowskistab00}
|(K+x)\backslash A|+ |(K+y)\backslash B|+|K\Delta C|<c\tau^{-\frac12}\varepsilon^{\frac12}
\end{equation}
for} an absolute constant $c>0$.
\end{theorem}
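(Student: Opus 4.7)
My plan is to reduce the planar stability statement to the one-dimensional Freiman theorem (Theorem~\ref{BrunnMinkowskistabn1}) via slicing, in the spirit of Christ's approach to stability for Brunn--Minkowski. Since $|A|, |B|, |C| \in (1-\varepsilon, 1+\varepsilon)$, the Brunn--Minkowski deficit
\[
\delta := |C| - \bigl((1-\lambda)|A|^{1/2} + \lambda|B|^{1/2}\bigr)^2
\]
is $O(\varepsilon)$. Fix the coordinate direction $e_1$, write $\pi$ for the projection onto $\R e_1$, and for each $s\in\R$ set $A_s := \{t \in \R : (s,t)\in A\}$, $a(s) := |A_s|$, and analogously $B_s, b(s), C_s, c(s)$. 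The hypothesis $(1-\lambda)A+\lambda B \subset C$ gives, for every $s_1\in\pi(A), s_2\in\pi(B)$,
\[
(1-\lambda) A_{s_1} + \lambda B_{s_2} \subset C_{(1-\lambda)s_1 + \lambda s_2},
\]
and the one-dimensional Brunn--Minkowski inequality yields $c((1-\lambda)s_1+\lambda s_2) \ge (1-\lambda) a(s_1) + \lambda b(s_2)$.

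Next I would decompose the deficit $\delta$ via Fubini into an \emph{outer} one-dimensional Brunn--Minkowski deficit for the projections $\pi(A),\pi(B),\pi(C)$, and an integrated \emph{inner} deficit $\delta_1(s)$ along the slices; each piece is $\leq \delta = O(\varepsilon)$. Applying Theorem~\ref{BrunnMinkowskistabn1} to the outer piece gives, after a translation, a common interval $I\subset\R$ with $|I\setminus\pi(A)| + |I\setminus\pi(B)| = O(\varepsilon)$. For the inner piece, Markov's inequality isolates a set of \emph{bad} slices with $\delta_1(s) > \sqrt{\varepsilon}$ of total measure $O(\sqrt{\varepsilon})$; on the complementary \emph{good} set, Theorem~\ref{BrunnMinkowskistabn1} produces intervals $J_A(s), J_B(s)\subset\R$ containing $A_s, B_s$ respectively, with excess $O(\tau^{-1}\sqrt{\varepsilon})$ after an $s$-dependent translation.

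Finally, I would build $K$ as the convex hull of the good-slice intervals $J_A(s)$, arranged so that their left endpoints form a convex function and their right endpoints a concave function of $s$ --- the condition for the union of vertical slices to be a convex planar set. Any deviation from this shape would, by 1D Brunn--Minkowski in the transverse direction, generate additional deficit that the bound $\delta = O(\varepsilon)$ precludes, thereby quantifying how far the endpoint data can be from concave/convex. Combining the per-slice error $O(\tau^{-1}\sqrt{\varepsilon})$ integrated over $I$ with the trivial contribution of the bad slices of total measure $O(\sqrt{\varepsilon})$, the desired estimate $|(K+x)\setminus A| + |(K+y)\setminus B| + |K\Delta C| \lesssim \tau^{-1/2}\sqrt{\varepsilon}$ should follow for appropriately chosen $x,y\in\R^2$.

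The main obstacle is the gluing step: establishing that the slice-wise one-dimensional approximations coalesce into a genuine convex body, with uniform error control in both the slice and projection variables. This relies on a quantitative convexity/concavity estimate for the endpoint functions of $J_A(s),J_B(s)$, derived from the ``cross'' deficit coupling the two orthogonal directions, and is where the optimal exponent $\tfrac12$ must be won. The factor $\tau^{-1/2}$ arises from balancing the measure of bad slices against the per-slice Freiman excess (the 1D Freiman statement applied with weights $\lambda$ or $1-\lambda$ degrades as these approach the boundary of $[\tau,1-\tau]$), while the threshold $\varepsilon\leq e^{-M(\tau)}$ reflects the implicit smallness required in Theorem~\ref{BrunnMinkowskistabn1} itself, especially when projections become thin as $\lambda$ nears that boundary.
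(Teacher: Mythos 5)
This statement is not proved in the paper at all: it is quoted from van Hintum, Spink and Tiba \cite{HST22}, whose actual argument is a long and delicate analysis specific to the plane, not a slicing reduction to Freiman's theorem. So your proposal has to stand on its own, and as written it has genuine gaps. The most concrete one is the ``outer'' step: you want to apply Theorem~\ref{BrunnMinkowskistabn1} to the projections $\pi(A),\pi(B),\pi(C)$, but the hypotheses give no control on $\H^1(\pi(C))$. The deficit bounds areas, not projection lengths: take $A=B$ the unit square and let $C$ be the unit square together with a horizontal tentacle of area $\varepsilon/2$ but enormous length; then $|C|<1+\varepsilon$ while $\H^1(\pi(C))$ is arbitrarily large, so the one-dimensional deficit of the projections is not $O(\varepsilon)$ and Freiman cannot be invoked there. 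Relatedly, your Fubini decomposition of $\delta$ into an outer plus integrated inner deficit is not valid with position-matched slices; the classical slicing proof of Brunn--Minkowski matches slices by measure (via the distribution functions of $a(\cdot)$ and $b(\cdot)$), and only after that reparametrization does the deficit split --- at which point the ``outer'' object is no longer the projection triple you propose. A further per-slice issue: Freiman's theorem requires the slice deficit to be at most $\min$ of the (rescaled) slice measures, which fails on thin slices near the ends of the projection, so even on your ``good'' set the intervals $J_A(s)$ are not produced everywhere, and the per-slice error $O(\tau^{-1}\sqrt{\varepsilon})$ integrated over an order-one interval would in any case give $\tau^{-1}\varepsilon^{1/2}$, not the claimed $\tau^{-1/2}\varepsilon^{1/2}$.

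The deeper gap is the gluing step, which you flag as the main obstacle but then dispatch by asserting that any failure of the endpoint functions of the slices to be convex/concave ``would generate additional deficit.'' Quantifying exactly that assertion --- with the sharp rate $\varepsilon^{1/2}$, uniformly in the slice-dependent translations that Freiman leaves free --- is essentially the content of the theorem itself; the only elementary arguments of this flavor in the literature (Christ's compactness argument, and the Figalli--Jerison scheme reproduced as Theorem~\ref{BrunnMinkowskistab} in this paper) yield either no exponent or exponents vastly weaker than $\tfrac12$. So the proposal, while a reasonable heuristic, does not constitute a proof: the outer reduction is false as stated, the slice-matching and thin-slice problems are unaddressed, and the coherence/convexification step is precisely the hard core of \cite{HST22} and is assumed rather than proved.
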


We note that, for $n\geq 2$, 
in \eqref{BrunnMinkowskistab00} one cannot have an estimate with better error term, both in terms of the order of $\tau$ and $\varepsilon$.
In higher dimensions, the only available quantitative stability version of the Brunn-Minkowski inequality has been established  by Figalli, Jerison \cite{FigalliJerison}.

\begin{theorem}[Figalli, Jerison]
\label{BrunnMinkowskistab}
For $\tau\in(0,\frac12]$ and $\lambda\in[\tau,1-\tau]$, let
 $A,B,C$ be bounded measurable subsets of $\R^n$, $n\geq 3$, with
$(1-\lambda)A+\lambda B \subset C$ and
$$
\Big||A|-1\Big|+\Big||B|-1\Big|+\Big||C|-1\Big|<\varepsilon
$$
for some $\varepsilon<e^{-A_n(\tau)}$,  with $A_n(\tau):=\frac{2^{3^{n+2}}n^{3^n}|\log \tau|^{3^n} }{\tau^{3^n}}$.
Then there exists a convex body $K$,
with $A\subset K+x$ and  $B\subset K+y$  for some $x,y\in\R^n$,
such that
\begin{equation}
\label{BrunnMinkowskistab0}
|(K+x)\backslash A|+
 |(K+y)\backslash B|+
 |K\Delta C|<\tau^{-N_n}\varepsilon^{\gamma_n(\tau)}
\end{equation}
where  $\gamma_n(\tau)=\frac{\tau^{3^n}}{2^{3^{n+1}}n^{3^n}|\log \tau|^{3^n} }$
and $N_n>0$ depends only on $n$.
\end{theorem}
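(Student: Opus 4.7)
The proof naturally proceeds by induction on the dimension $n$, with the base case $n=1$ being Freiman's theorem (Theorem~\ref{BrunnMinkowskistabn1}). First I would reduce to a symmetric set-up. By rescaling $A\mapsto aA$ and $B\mapsto bB$ with carefully chosen $a,b>0$, one arranges $|A|=|B|=1$ at the cost of a polynomial-in-$\tau$ degradation of the error. A further interpolation-type argument reduces to $\lambda=\tfrac12$, again at a cost absorbed into the $\tau^{-N_n}$ prefactor in \eqref{BrunnMinkowskistab0}.

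For the inductive step from $n-1$ to $n$, I fix the coordinate direction $e_n$ and consider the slices $A^t:=\{y\in\R^{n-1}:(y,t)\in A\}$, and analogously $B^u, C^s$. Since $(1-\lambda)A+\lambda B\subset C$ forces $(1-\lambda)A^t+\lambda B^u\subset C^{(1-\lambda)t+\lambda u}$, the Brunn--Minkowski inequality in $\R^{n-1}$ yields
\[
|C^{(1-\lambda)t+\lambda u}|^{\frac{1}{n-1}}\geq (1-\lambda)|A^t|^{\frac{1}{n-1}}+\lambda|B^u|^{\frac{1}{n-1}}.
\]
The slice volumes thus satisfy a one-dimensional Brunn--Minkowski-type inequality whose integrated form, combined with the assumed near-equality in the original $n$-dimensional problem, is itself close to equality. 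The one-dimensional stability (either Freiman or a functional refinement) then implies that $t\mapsto|A^t|^{1/(n-1)}$ is $L^1$-close to a concave profile on a large set $G\subset\R$, and that for most $t\in G$ the triple $(A^t, B^{u(t)}, C^{(1-\lambda)t+\lambda u(t)})$ is itself a near-extremizer of Brunn--Minkowski in $\R^{n-1}$ for a suitable offset $u(t)$. Applying the inductive hypothesis slice by slice yields a convex body $K_t\subset\R^{n-1}$ such that $A^t$ is $L^1$-close to a translate of $K_t$.

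The main obstacle is then to glue the family $\{K_t\}_{t\in G}$ into a single convex body $K\subset\R^n$ whose horizontal slices approximate $K_t$ for most $t$. This demands quantifying the compatibility of neighboring slices: combining the near-concavity of $t\mapsto|A^t|^{1/(n-1)}$ with the approximate extremality of adjacent slice triples, one argues that the $K_t$ vary in a controlled, nearly-affine way, so that after normalizing each $K_t$ (e.g.\ by its John ellipsoid) the underlying affine maps are close to those produced by slicing a single convex body in $\R^n$. Propagating the error through the slicing, the one-dimensional stability step, and the $(n-1)$-dimensional inductive hypothesis amplifies the defect by roughly a cubic power at each dimensional step, which is the source of the $3^n$-tower in both $A_n(\tau)$ and the exponent $\gamma_n(\tau)$. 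Avoiding this tower appears to be a substantial difficulty intrinsic to the dimensional induction, and is precisely the point later addressed by van Hintum--Spink--Tiba in $n=2$ and by Kolesnikov--Milman for convex sets.
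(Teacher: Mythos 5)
First, a point of order: Theorem~\ref{BrunnMinkowskistab} is not proved in this paper at all. It is the main theorem of Figalli--Jerison \cite{FigalliJerison}, quoted verbatim and used as a black box in Section~\ref{sec:hi-d-case}, so the only meaningful comparison is with the proof in \cite{FigalliJerison} itself. Measured against that proof, your sketch captures the general philosophy (induction on dimension, slicing, approximate concavity of $t\mapsto |A^t|^{1/(n-1)}$, and the exponent loss at each dimensional step as the source of the $3^n$ constants), but it has a genuine gap precisely where the real work lies, and at one earlier point as well. The early point: the reduction to $\lambda=\frac12$ ``by an interpolation-type argument'' at a cost absorbed into $\tau^{-N_n}$ is unsupported; near-optimality for a given $\lambda$ does not transfer to near-optimality for $\frac12$ by any elementary manipulation, and in \cite{FigalliJerison} the general $\lambda\in[\tau,1-\tau]$ is carried through the whole argument --- this is exactly why $\tau$ enters the exponent $\gamma_n(\tau)$ and the threshold $A_n(\tau)$, not merely the prefactor.

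The main gap is the gluing step, which you yourself flag as ``the main obstacle'' and then resolve only by assertion. Applying the inductive hypothesis slice by slice produces convex bodies $K_t$ (and translation vectors) for levels $t$ in a good set, with no control on how either varies in $t$; the claim that after a John-ellipsoid normalization the $K_t$ ``vary in a controlled, nearly-affine way'' is precisely the statement that needs proof, and no mechanism is offered. Note that in the equality case not only the shapes but also the barycenters of the slices must depend affinely on $t$, so any quantitative gluing must control barycenter-type functions and the pairing $u(t)$ between slices of $A$ and $B$ (a monotone transport between distribution functions, shown to be nearly affine), none of which appears in the sketch. This is also not how Figalli and Jerison argue: they never glue slice-wise convex bodies. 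Instead they derive three-point and four-point inequalities for slice-volume and related functions and deduce approximate concavity after removing a small bad set (their Lemma~3.6 and Remark~4.1, which are exactly the tools imported in Sections~\ref{sec:rearranged-version} and~\ref{sec:general-case} of the present paper, cf.\ Lemma~\ref{thm:near-concave-lemma}), combine this with symmetrization, their earlier sumset results for $A=B$, and Christ-type arguments to show that $A$, $B$, $C$ are close to their convex hulls, and only at the end compare the hulls via stability for convex sets. So your proposal is not a proof as it stands, and its missing step is the heart of the theorem rather than a routine verification.
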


\begin{remark}
We list here some result for particular cases of Theorem~\ref{BrunnMinkowskistab}.
\begin{itemize}
\item When $A=B$, van Hintum, Spink, Tiba \cite{HST21} obtained the optimal stability version, where the error term in \eqref{BrunnMinkowskistab0} is of the from $c_n\tau^{-\frac12}\varepsilon^{\frac12}$
with $c_n>0$ depending only on $n$. Their result improves the previous contributions \cite{FJA,FJABvol,FJA23}.
\end{itemize}
When at least one of the sets $A$ or $B$ is convex, several results have been obtained, as described below.
However, it is important to observe that all these results measure stability by controlling the symmetric difference between $A$ and a translate of $B$. This is weaker than the statement in Theorem~\ref{BrunnMinkowskistab}, where one finds a convex set $K$ that contains both $A$ and $B$ (up to a translation) with a control on the missing volume. Here are some important results.
\begin{itemize}
\item When  either $A$ or $B$ is convex, an optimal stability estimate has been proved by Barchiesi, Julin \cite{BaJ17}.
This extends earlier results  about the case 
where both $A$ and $B$ are convex \cite{FMP09,FMP10}, or when either $A$ or $B$ is the unit ball \cite{FMM}.
\item If $A$ and $B$ are convex and $n$ is large, then Kolesnikov, Milman \cite{KoM} provided an estimate on $|A\Delta(x+B)|$ with a bound of the form $c\,n^{2.75} \tau^{-\frac12}\varepsilon^{\frac12}$,  for some absolute constant $c$. Actually, we note that the term $n^{2.75}$ can be improved to $n^{2.5+o(1)}$ by combining the general estimates of Kolesnikov, Milman \cite[Section 12]{KoM} with the bound $n^{o(1)}$ on the Cheeger constant of a convex body in isotropic position, that follows from Chen’s work \cite{Che21} on the Kannan-Lovasz-Simonovits conjecture.
\end{itemize}
\end{remark}

\subsubsection{Stability for Pr\'ekopa-Leindler}
With respect to the Brunn-Minkowski inequality, until now much less was known about stability for the Pr\'ekopa Leindler inequality, except for some results in the case of log-concave functions (see the discussion below).
In this paper, we prove the first  quantitative stability result for the Pr\'ekopa-Leindler inequality on arbitrary functions.

\begin{theorem}
\label{PLstab}
Given $\tau\in(0,\frac12]$ and $\lambda\in[\tau,1-\tau]$, let $f,g,h : \R^n \to \R_{\ge 0}$ be measurable functions such that $h\left((1-\lambda)x+\lambda y\right) \ge f(x)^{1-\lambda} g(y)^{\lambda}$ for all $x,y \in \R^n$,
and 
\begin{equation}\label{eq:almost-eq}
\int_{\R^n} h < (1+\eps) \left(\int_{\R^n} f\right)^{1-\lambda} \left(\int_{\R^n} g\right)^{\lambda}\qquad \text{for some $\eps>0$.}
\end{equation}
There are a computable dimensional constant $\Theta_n$ and computable constants $Q_n(\tau)$ and $M_n(\tau)$ depending only on $n$ and $\tau$,\footnote{At the end of the proof of Theorem~\ref{PLstab}, we provide explicit values for the constants $M_n(\tau),Q_n(\tau), \Theta_n$.} such that the following holds: If
$0< \varepsilon < e^{-M_n(\tau)}$,
then there exist $\tilde{h}$ log-concave  and $w \in \R^n$ such that 
\[
\int_{\R^n} |h-\tilde{h}| + \int_{\R^n} |a^{\lambda}f-   \tilde{h}(\cdot + \lambda\,w)| + 
\int_{\R^n} |a^{\lambda-1}g- \tilde{h}(\cdot + (\lambda-1)w)| <
\frac{\varepsilon^{Q_n(\tau)}}{\tau^{\Theta_n}} \int_{\R^n}h,
\]
where $a=\int_{\R^n} g/\int_{\R^n} f$. 
\end{theorem}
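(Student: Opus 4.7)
The natural approach is to reduce the stability for \eqref{PL-formula} to Brunn--Minkowski stability (Theorem~\ref{BrunnMinkowskistab}) via the superlevel-set method that underlies the classical proof of \eqref{PL-formula}. Using the rescalings $(f,g,h)\mapsto(cf(\cdot/r),dg(\cdot/r),c^{1-\lambda}d^{\lambda}h(\cdot/r))$ together with translations, I would first normalize so that $\sup f=\sup g=1$ and the (approximate) argmax of $h$ lies at the origin; the near-equality hypothesis \eqref{eq:almost-eq} then also forces $\sup h$ to be close to $1$. Introducing the superlevel sets $A_t=\{f\ge t\}$, $B_t=\{g\ge t\}$, $C_t=\{h\ge t\}$, the hypothesis \eqref{eq:condition} gives $(1-\lambda)A_t+\lambda B_t\subset C_t$ for every $t\in(0,1]$, and hence \eqref{BrunnMinkowski} together with weighted AM--GM yields
\[
|C_t|\ge\bigl((1-\lambda)|A_t|^{1/n}+\lambda|B_t|^{1/n}\bigr)^n\ge|A_t|^{1-\lambda}|B_t|^{\lambda}.
\]
Integrating in $t$ via the layer-cake formula and applying the one-dimensional Prékopa--Leindler inequality to the profiles $t\mapsto|A_t|,|B_t|,|C_t|$ recovers~\eqref{PL-formula}, so near-equality in \eqref{eq:almost-eq} propagates to near-equality at each step of this chain.

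A Chebyshev-type argument then isolates a "good" set of levels $t$, carrying most of the $h$-mass, on which (i) the Brunn--Minkowski defect for $(A_t,B_t,C_t)$ is small, and (ii) $|A_t|\approx|B_t|$. On this good set I would apply Theorem~\ref{BrunnMinkowskistab} at each level to produce a convex body $K_t$ and translations $x_t,y_t,z_t$ with $A_t\subset K_t+x_t$, $B_t\subset K_t+y_t$, and $C_t\approx K_t+z_t$, with missing-volume bound controlled by the level-$t$ defect raised to the power $\gamma_n(\tau)$. Integrating these bounds in $t$ is what will produce the final $L^1$ estimate of the theorem.

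The key obstacle is that the triples $(K_t,x_t,y_t,z_t)$ are not canonically specified by Theorem~\ref{BrunnMinkowskistab} and must be synchronized across $t$ so as to arise from a single log-concave function. My plan is to fix a reference level $t_\ast$ where the BM defect is smallest (such a level exists by averaging), set $K=K_{t_\ast}$ and freeze the translation datum there, and then propagate to other levels through the nestedness $C_s\supset C_t$ for $s<t$ combined with the level-$t$ stability; this should force $K_t$ to be close to a scalar dilate of $K$ translated by an affine-in-$\log t$ profile, in line with the equality case of Theorem~\ref{PL}. Setting $\tilde h(z):=\sup\{t:z\in K_t+z_t\}$ then gives a quasi-concave function, and the PL-inherited functional relation between levels upgrades quasi-concavity to actual log-concavity; the $L^1$ bound on $\int_{\R^n}|h-\tilde h|$ follows from integrating $|C_t\Delta(K_t+z_t)|$ in $t$, and the analogous estimates for $a^{\lambda}f$ and $a^{\lambda-1}g$ come from the corresponding bounds on $A_t$ and $B_t$, with the multiplicative constant $a=\int g/\int f$ emerging from (near-)equality in the AM--GM/Hölder step. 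The final constants $Q_n(\tau)$, $M_n(\tau)$, and $\Theta_n$ in the statement will be inherited from the iterated dependence of $\gamma_n(\tau)$ in Theorem~\ref{BrunnMinkowskistab}, together with losses from the level-averaging argument.
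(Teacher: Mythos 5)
There is a genuine gap, and it sits at the very first step of your chain. From \eqref{eq:condition} and Brunn--Minkowski you only get the pointwise bound $|C_t|\ge\bigl((1-\lambda)|A_t|^{1/n}+\lambda|B_t|^{1/n}\bigr)^n\ge|A_t|^{1-\lambda}|B_t|^{\lambda}$, and the near-equality \eqref{eq:almost-eq} only controls the \emph{integrated} quantity $\int|C_t|\,dt-(\int|A_t|\,dt)^{1-\lambda}(\int|B_t|\,dt)^{\lambda}$. To conclude that at most levels the Brunn--Minkowski deficit of $(A_t,B_t,C_t)$ is small \emph{and} $|A_t|\approx|B_t|$ (which is what Theorem~\ref{BrunnMinkowskistab} needs, since its hypothesis requires the three volumes to be comparable), you must also know that the AM--GM/H\"older step is nearly saturated level by level, i.e.\ that the profiles $F(t)=|A_t|$, $G(t)=|B_t|$, $H(t)=|C_t|$ are nearly optimal \emph{and nearly coincident after rescaling} for the multiplicative one-dimensional Pr\'ekopa--Leindler inequality $H(r^{1-\lambda}s^{\lambda})\ge F(r)^{1-\lambda}G(s)^{\lambda}$. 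Extracting that information is not a Chebyshev argument: it is exactly a one-dimensional stability theorem for Pr\'ekopa--Leindler for arbitrary measurable functions, i.e.\ the $n=1$ case of Theorem~\ref{PLstab} itself, which your sketch never proves. In the paper this is the bulk of the work (Sections~\ref{sec:tail-estimates}--\ref{sec:general-case}: tail estimates, reduction via symmetric rearrangement and Freiman's theorem to interval level sets, and then the four-point inequality machinery of \cite{FigalliJerison} applied to the endpoint functions $t\mapsto a_f(e^T),b_f(e^T)$ to produce convex/concave profiles); only then is the level-set comparison Lemma~\ref{thm:level-set-measure-close} available.

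The second problem is the synchronization step, which you correctly identify as the key obstacle but resolve only heuristically. Applying Theorem~\ref{BrunnMinkowskistab} separately at each good level produces bodies $K_t$ and translations with no canonical relation, and "propagating through nestedness from a reference level $t_\ast$" does not obviously force the affine-in-$\log t$ structure you need; moreover, even granting nested convex sets $K_t+z_t$, the resulting $\sup\{t: z\in K_t+z_t\}$ is merely quasi-concave, and the passage to genuine log-concavity requires the inclusion $(1-\lambda)(K_r+z_r)+\lambda(K_s+z_s)\subset K_{r^{1-\lambda}s^{\lambda}}+z_{r^{1-\lambda}s^{\lambda}}$, which is precisely what has to be proved, not assumed (Example~\ref{examp:no concave hull} shows the limit object can be far from any na\"ive geometric hull, so this structure is delicate). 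The paper sidesteps both issues in the higher-dimensional step by applying Theorem~\ref{BrunnMinkowskistab} \emph{once}, in dimension $n+1$, to the truncated hypographs $\mathcal{S}_f,\mathcal{S}_g,\mathcal{S}_h$ of $\log f,\log g,\log h$: the volume-closeness hypotheses for that single application come from the one-dimensional theorem applied to the distribution functions (Corollary~\ref{thm:multiplicative} and Lemma~\ref{thm:level-set-measure-close}), and the convex hull of the hypograph automatically yields one concave function $T_f$ (resp.\ $T_h$), hence one log-concave approximant, with all levels synchronized for free. So your outline captures the right ingredients (level sets, Brunn--Minkowski stability, layer cake), but as written it assumes the one-dimensional stability it would need as input and leaves the cross-level gluing, which is the actual difficulty, unproved.
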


\begin{remark} If $f,g,h$ are {\it a priori} assumed to be log-concave, then Theorem~\ref{PLstab} was established by Ball, B\"or\"oczky \cite{BallBoroczky} and B\"or\"oczky, De \cite{BoroczkyDe} in the case $n=1$ (in this case, $\varepsilon^{Q_n(\tau)}/\tau^{\Theta_n}$ in Theorem~\ref{PLstab} can be essentially replaced by $(\varepsilon/\tau)^{\frac1{3}}$; see also Theorem~\ref{PLstab-logconv-dim1}),
and by B\"or\"oczky, De \cite{BoroczkyDe} in the case $n\geq 2$ (in that case, 
$\varepsilon^{Q_n(\tau)}/\tau^{\Theta_n}$ in Theorem~\ref{PLstab} can be replaced by
 $(\varepsilon/\tau)^{\frac1{19}}$). Further, we note that Bucur, Fragal\`a \cite{BuF14} proved another interesting stability version of the Pr\'ekopa-Leindler inequality for log-concave functions, bounding the distance of all one dimensional projections.
 \end{remark}

Theorem~\ref{PLstab} is probably quite far from the optimal version, that one could conjecture to provide a bound of the form $C(n,\tau)\varepsilon^{\frac12}$.
In this direction, already for $n=1$,  Example~\ref{bigerrordim1} below shows that
the error term in Theorem~\ref{PLstab} is at least $c\varepsilon^{\frac12}$. 

At first sight, this is perhaps surprising, because in the case of Freiman's result (Theorem~\ref{BrunnMinkowskistabn1}) the error is of order $\varepsilon$, which shows that the Brunn--Minkowski and Pr\'ekopa-Leindler inequalities exhibit different behaviors for $n=1$. Nonetheless, our proof of Theorem~\ref{PLstab} shows that that the Pr\'ekopa--Leindler inequality in dimension $n$ shares some - but not all - of the geometric aspects of the Brunn--Minkowski inequality in dimension $n+1,$ which explains, at least partially, the difference between the two exponents.

Another important difference between the stability version of the Pr\'ekopa-Leindler and the Brunn-Minkowski inequality is shown by the following observation: when $A=B$, the convex set $K$ in Theorem~\ref{BrunnMinkowskistab} coincides with the convex hull of $A$; on the other hand, for $f=g$, the function $\tilde h$ in Theorem \ref{PLstab} can be quite far from the log-concave hull of $f$ (see Example~\ref{examp:no concave hull} below). In other words, there is no direct geometric characterization of the function $\tilde h$ (see also Remark~\ref{rmk:PLvsBM} below).

\smallskip

As mentioned above, the following example shows that
the error term in Theorem~\ref{PLstab} is at least~$c\varepsilon^{\frac12}$.

\begin{example}
\label{bigerrordim1}
There is an absolute constant $c\in (0,1)$ such that the following holds.
For any $\varepsilon\ll 1$, there exist log-concave probability densities $f,g$ on $\R$ such that
\begin{equation}
\label{exa1}
\int_{\R}\sup_{z=\frac12\,x+\frac12\,y}f(x)^{\frac12}g(y)^{\frac12}\,dz<1+\varepsilon,
\end{equation}
while \begin{equation}
\label{exa2}
\int_\R |g(x)-f(x+w)|\,dx\geq c\varepsilon^{\frac12}\qquad \text{for any $w\in\R$}.
\end{equation}
\end{example}
\begin{proof}
We fix $f(x)=e^{-\pi x^2}$ and  an odd $C^2$ function $\varphi$ on $\R$ satisfying
${\rm supp}\,\varphi\subset [-1,1]$ and $\max \varphi=1$. Note that, since $\varphi$ is odd, $\int_{\R}f\varphi=0$.

Given $\eta\ll1$ to be fixed later, we consider
$g=(1+\eta \varphi)f$ so that $\int_{\R}g=1$.
 We note that there exists a constant $\tilde{c}\geq 2$ such that
\begin{eqnarray}
\label{logphiprime}
|[\log(1+\eta\varphi)]'|&=&\left|\eta\cdot \frac{\varphi'}{1+\eta\,\varphi}\right|\leq \tilde{c}\eta\\
\label{logphiprimeprime}
|[\log(1+\eta\varphi)] '' |&=&\left|\eta\cdot \frac{\varphi '' (1+\eta\,\varphi)-\eta(\varphi')^2}{(1+\eta\,\varphi)^2}
\right|\leq \tilde{c}\eta
\end{eqnarray}
for any $\eta\in(0,\frac12)$. In particular, Since $(\log f) '' =-2\pi$, it follows that $g$ is log-concave provided $\eta\ll 1/\tilde{c}$.

Note now that, since $g(x)=f(x)=e^{-\pi x^2}$ for $|x|\geq 1$, there exists a constant $c_0>0$ such that
\begin{equation}
\label{gfw1}
\int_\R |g(x)-f(x+w)|\,dx\geq \int_1^\infty|e^{-\pi x^2}-e^{-\pi (x+w)^2}|\,dx \geq c_0 \min\{|w|,1\}.
\end{equation}
On the other hand, we have
$$
\int_\R |g(x)-f(x+w)|\,dx\geq \int_\R |g(x)-f(x)|-|f(x)-f(x+w)|\,dx\geq
\eta \int_\R f(x)|\varphi(x)|\,dx-\bar c|w|.
$$
Hence, combining this last estimate with \eqref{gfw1}, we deduce the existence of a constant $c_1>0$ such that
\begin{equation}
\label{gfw}
\int_\R |g(x)-f(x+w)|\,dx\geq c_1\eta \qquad \forall\, w \in \R.
\end{equation}

Finally, we estimate $\int_\R h$ for $h(z)=\sup_{2z=x+y}\sqrt{f(x)g(y)}$.
To this aim, consider the auxiliary function
$\tilde{h}(z)=\sqrt{f(z)g(z)}$.
Thanks to H\"older inequality, this satisfies
$\int_{\R}\tilde{h}\leq 1.$ 

Since $f$ and $g$ are log-concave and  $g(x)=f(x)$ for $|x|\geq 1$, 
for any $z\in\R$,
there exists a point $y_z\in \R$  such that $h(z)=\sqrt{f(2z-y_z)g(y_z)}$. Also,
$y_z=z$ if $|z|\geq 1$, and $|y_z|\leq 1$ if $|z|\leq 1$.

We now observe 
that, for any $z\in\R$, the function
$\psi_z(y)=\log\sqrt{f(2z-y)g(y)}$ 
satisfies $\psi_z(z)=\log\tilde{h}(z)$, $\psi_z(y_z)=\log h(z)$ ,
and $\psi_z$ has a maximum at $y_z$. Then, recalling \eqref{logphiprime}, we have
$$
0=\psi'_z(y_z)=2\pi(z-y_z)+\mbox{$\frac12\,[\log(1+\eta\varphi)]'(y_z)$}\qquad\Rightarrow\qquad |z-y_z|\leq \tilde{c}\eta.
$$ 
Hence, since
$|\psi_z ''|$ is bounded, a Taylor expansion yields (recall that $\psi'_z(y_z)=0$)
$$
\log\frac{h(z)}{\tilde{h}(z)}=\psi_z(y_z)-\psi_z(z)\leq c_2\eta^2\qquad \forall\,z\in \R,
$$
for some constant $c_2>1$, and  we conclude that
$$
\int_\R h\leq e^{c_2\eta^2}\int_\R \tilde{h}\leq e^{c_2\eta^2}< 1+2c_2\eta^2 \qquad\text{for }\eta \ll 1.
$$
Choosing $\eta:= (2c_2)^{-\frac{1}{2}}\varepsilon^{\frac12}$, \eqref{gfw} and the equation above prove the result.
\end{proof}

The next examples shows that, even in the case $f=g$,
the function $\tilde h$ provided by Theorem \ref{PLstab} {\it cannot} be chosen to be the log-concave hull of $f$  (i.e., the smallest log-concave function above $f$).
\begin{example}
\label{examp:no concave hull}
For any $\eps>0$ there exist $f,h : \R \to \R_{\ge 0}$ measurable functions such that $h\left(\frac12 x+\frac12 y\right) \ge f(x)^{\frac12}f(y)^{\frac12}$ for all $x,y \in \R^n$,
$$
\int_{\R} h < (1+\eps) \int_{\R} f,
$$
but
$$
\int_\R (F-f)\geq \frac12 \int_\R f,
$$
where $F$ denotes the log-concave hull of $f$.
\end{example}
\begin{proof}
Given $A \gg 1$, let $f$ be defined as
$$
f(x)=\left\{
\begin{array}{ll}
e^{-x} &\text{on }[0,1]\cup [2A,2A+1]\\
0 &\text{otherwise}
\end{array}
\right.
$$
and set $h(z):=\sup_{z=\frac12\,x+\frac12\,y}f(x)^{\frac12}f(y)^{\frac12}$.
Then
$$
h(x)=\left\{
\begin{array}{ll}
e^{-x} &\text{on }[0,1]\cup[A,A+1]\cup [2A,2A+1]\\
0 &\text{otherwise}
\end{array}
\right.
$$
and therefore
$$
\int_{\R} h < (1+\eps) \int_{\R} f
$$
with $\eps \simeq e^{-A} \ll 1$.
On the other hand, the log-concave hull of $f$ is given by
$$
F(x)=\left\{
\begin{array}{ll}
e^{-x} &\text{on }[0,2A+1]\\
0 &\text{otherwise}
\end{array}
\right..
$$
Hence, for $A\gg 1$,
$$
\int_{\R}(F-f)=\int_{1}^{2A}e^{-x}dx =e^{-1}-e^{-2A} \geq \frac12\left(1-e^{-1}\right)=\frac12 \int_\R f ,
$$
as desired.
\end{proof}

\begin{remark}
\label{rmk:PLvsBM}
The argument used in Example \ref{examp:no concave hull} emphasizes a key difference between the Brunn-Minkowski inequality and the Pr\'ekopa-Leindler inequality: while in the Brunn-Minkowski inequality
only arithmetic means of points are considered, in Pr\'ekopa-Leindler one considers points $z$ that are the arithmetic mean of $x$ and $y$, but then the value of $h(z)$ is obtained as a geometric mean of the values of $f(x)$ and $g(y)$. This key difference is the source of many new challenges when proving stability results for Pr\'ekopa-Leindler.
\end{remark}

\subsection{Outline of the proof of Theorem~\ref{PLstab}} We now sketch the structure of the proof of Theorem~\ref{PLstab}, which is split in four main steps.
The first three steps deal with the one-dimensional case. Then, in Step 4, we exploit both the one dimensional case and  Theorem~\ref{BrunnMinkowskistab} to obtain the higher-dimensional result.

\begin{enumerate}
 \item We first deal with the case of symmetrically rearranged functions, and prove the result in this case.
Note that, if $f,g,h$ satisfy  \eqref{eq:condition} and \eqref{eq:almost-eq}, then also their rearrangements $f^*,g^*,h^*$ satisfy the same estimates.

 \item With the knowledge that the result holds for $f^*,g^*,h^*,$ we deduce conditions on the distribution functions $t \mapsto \H^1(\{f>t\}), \, \H^1(\{g>t\})$. In particular, from \eqref{eq:almost-eq} applied to $f,g,h,$ we use a stability version of the 
 Brunn--Minkowski inequality in one-dimension in order to prove that $f$ and $g$ are close to ``bubble-shaped'' functions (i.e., that are nondecreasing on an interval $(-\infty,a)$ and nonincreasing on $(a,+\infty)$).
 
Calling  $\phi$ and $\psi$ such ``bubble-shaped'' functions, we define 
 \[
 \lambda(z) = \sup_{(1-\lambda)x + \lambda y = z} \phi(x)^{1-\lambda} \psi(y)^{\lambda}.
 \] 
This function is measurable (thanks to the fact that $\phi$ and $\psi$ are ``bubble-shaped''), and an analysis similar to the proof of Proposition~\ref{thm:close-stable} shows that $\phi,\psi,\lambda$ satisfy both \eqref{eq:condition} and \eqref{eq:almost-eq} (but for some smaller power of $\eps$). 
 \item Denote
 \begin{equation*}\begin{split}
  \{x \in \R \colon \phi(x) > t\}  = (a_f(t),b_f(t)),\qquad
  \{x \in \R \colon \psi(x) > t\} & = (a_g(t),b_g(t)).
 \end{split}\end{equation*}
Then we use the almost-optimality of $\phi,\psi,\lambda$ to prove that, on a large set, a four-point inequality (in the same spirit of \cite[Lemma~3.6~and~Remark~4.1]{FigalliJerison}) is satisfied by the functions $\mathcal{B}_f(T) = b_f(e^T)$ and $\mathcal{B}_g(T) = b_g(e^T),$ and a `reversed' version 
 of such four-point inequality holds for $\mathcal{A}_f(T) = a_f(e^T)$ and $\mathcal{A}_g(T) = a_g(e^T).$ 

As a consequence, we are able to prove that $\mathcal{A}_f,\mathcal{A}_g$ are both $L^1$-close to convex functions $m_f,m_g$ on a large interval. Analogously, $\mathcal{B}_f,\mathcal{B}_g$ are $L^1$-close to concave functions $n_f,n_g$ on the same large interval. 
Thanks to these facts, we show that there exist log-concave function $\tilde{\phi}$ and $\tilde\psi$ such that $\{\tilde{\phi} > t\} = (m_f(\log t), n_f(\log t))$ and $\{\tilde{\psi} > t\} = (m_g(\log t), n_g(\log t))$ on a large interval. 

 Finally, we translate the properties of $\mathcal{A}_f,\mathcal{A}_g,\mathcal{B}_f,\mathcal{B}_g,m_f,m_g,n_f,n_g$ into a bound on $\|\phi-\tilde{\phi}\|_1,$ which can be thus made small. By Proposition~\ref{thm:close-stable}, we conclude the one-dimensional case of Theorem~\ref{PLstab}.
 
 \item In order to obtain the result also in higher dimensions, we consider the hypographs of the logarithms of $f,g,h$. Denoting these sets by $\mathcal{S}_f,\mathcal{S}_g,\mathcal{S}_h,$ respectively, we show that they satisfy the Brunn--Minkowski condition $\mathcal{S}_h \supset (1-\lambda)\mathcal{S}_f + \lambda \mathcal{S}_g.$ In particular, due to the one-dimensional case, we can estimate how level sets of $f,g,h$ are close to each other, in terms of volume. This enables us to use the main theorem in \cite{FigalliJerison} on the sets $\mathcal{S}_f,\mathcal{S}_g,\mathcal{S}_h,$ which in turn produces a natural algorithm to construct log-concave functions  close to $f,g,h.$ 
\end{enumerate}

The rest of the manuscript is organized as follows: in Section~\ref{sec:tail-estimates}, we prove tail estimates that allow us to suitably truncate the functions under consideration, as well as estimate on the size of level sets. This allows us to perform a set of preliminary reductions of the one-dimensional problem. In Section~\ref{sec:rearranged-version}, we prove Theorem~\ref{PLstab} in the case when $n=1$ and $f,g,h$ are symmetrically decreasing, while in Section~\ref{sec:general-case} we deal with the general one dimensional case. Finally, in Section~\ref{sec:hi-d-case}, we prove the theorem in arbitrary dimension. 

\vspace{2mm}

Throughout the manuscript, we will use the notation $\H^k$ for the $k-$dimensional Hausdorff measure of a set. Sometimes we shall use $c>0$ to denote an absolute (computable) constant, whose exact value might change from one part of the paper to the next, and even from line to line. We will also occasionally use a subscript, e.g. $c_n$, to indicate dependence of the constant on a dimensional parameter. Moreover, we write $a \lesssim b$ whenever $a/b$ is bounded from above by an absolute and explicitly computable constant, and we shall use a subscript $a \lesssim_n b$ to emphasize the dependence of the bound on the dimension considered. Finally, we write $a \simeq b$ if both $a \lesssim b$ and $b \lesssim a$ hold.

\medskip

{\noindent \it Acknowledgments.}
The first author is supported by the NKFIH Grant 132002.
The second and third author are supported by the European Research Council under the Grant Agreement No.
721675 ``Regularity and Stability in Partial Differential Equations (RSPDE).''

\section{Tail estimates in the case of almost equality in the one-dimensional Pr\'ekopa-Leindler inequality}\label{sec:tail-estimates}

A useful tool for our study is the symmetric decreasing rearrangement. For a bounded function $\varphi:\R\to \R_{\geq 0}$ with $0<\int_\R\varphi<\infty$, 
we define its symmetric decreasing rearrangment $\varphi^*:\R\to \R_{\geq 0}$ by
$$
\varphi^*(t)=\inf\big\{\alpha:\,\H^1(\{\varphi\geq \alpha\})\leq 2|t|\big\}.
$$
In particular, $\varphi^*$ is an even function that is monotone decreasing on $[0,\infty)$, $\varphi^*(0)$ is the essential supremum of 
$\varphi,$ and 
\begin{equation}
\label{pfi*}
\H^1(\{\varphi\geq \alpha\})=\H^1(\{\varphi^* \geq \alpha\})
\end{equation}
for any $\alpha>0$ with $\H^1(\{\varphi\geq \alpha\})>0$. In particular, the level sets $\{\varphi^*\geq \alpha\}$ are symmetric segments, and the layer cake representation
yields $\int_\R\varphi=\int_\R\varphi^*$.

Symmetric decreasing rearrangement works very well for the Pr\'ekopa-Leindler inequality. For $\lambda\in(0,1)$ and
bounded functions $f,g,h:\R\to \R_{\geq 0}$ with positive integral,
if $h((1-\lambda)x+\lambda y)\geq f(x)^{1-\lambda}g(y)^\lambda$  for any $x,y\in\R$, then
the one-dimensional Brunn-Minkowski inequality yields 
$h^*((1-\lambda)x+\lambda y)\geq f^*(x)^{1-\lambda}g^*(y)^\lambda$ for any $x,y\in\R$.
Also, if $\varphi$ is log-concave, then the same holds for $\varphi^*$.

The main goal of this section is to show that if we have almost equality in the one-dimensional Pr\'ekopa-Leindler equality, then the functions $f,g,h$ in \eqref{eq:almost-eq} with positive integral satisfy similar tail estimates like log-concave functions
(here $\varphi: \R \to \R_{\ge 0}$ has positive integral if
$0<\int \varphi<\infty$). 
First we review the related properties of log-concave functions. Let us recall the following estimate from  
 \cite{BallBoroczky,BoroczkyDe}:
 
\begin{theorem}[Ball, B\"or\"oczky, De]
\label{PLstab-logconv-dim1}
For $\tau\in(0,\frac12]$ and $\lambda\in[\tau,1-\tau]$, let $f,g,h : \R \to \R_{\ge 0}$ be log-concave functions with positive integral such that $h\left((1-\lambda)x+\lambda y\right) \ge f(x)^{1-\lambda} g(y)^{\lambda}$ 
for all $x,y \in \R$, and
\begin{equation}\label{PLstab-logconv-dim1-eq}
\int_{\R} h < (1+\eps) \left(\int_{\R} f\right)^{1-\lambda} \left(\int_{\R} g\right)^{\lambda}
\end{equation}
for some $\varepsilon\in(0,1)$.
Then there exists $w \in \R$ such that
\[
 \int_{{\R}} |a^{\lambda}f-   h(\cdot +\lambda\,w)| + 
\int_{{\R}} |a^{\lambda-1}g- h(\cdot + (\lambda-1)w)| < c
\left(\frac{\varepsilon}{\tau}\right)^{\frac13}|\log \varepsilon|^{\frac43}\int_{\R^n}h,
\]
where  $a=\int_{{\R}} g/\int_{{\R}} f$,  and $c>1$ is an absolute constant.
\end{theorem}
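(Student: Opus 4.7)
Replacing $f$ by $a^{\lambda} f$ and $g$ by $a^{\lambda-1} g$ and then rescaling, we may assume $\int_{\R} f = \int_{\R} g = 1$, so that \eqref{PLstab-logconv-dim1-eq} reads $\int_{\R} h \le 1+\varepsilon$; the goal becomes to exhibit $w \in \R$ with $f$ close to $h(\cdot + \lambda w)$ and $g$ close to $h(\cdot - (1-\lambda) w)$ in $L^{1}$. Since $f, g$ are log-concave probability densities on $\R$, the unique increasing monotone transport $T \colon \R \to \R$ with $T_{\#} f = g$ is absolutely continuous and satisfies $g(T(x))\, T'(x) = f(x)$ a.e. Set $S(x) := (1-\lambda) x + \lambda T(x)$, so $S'(x) = (1-\lambda) + \lambda T'(x)$. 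Combining the Pr\'ekopa--Leindler hypothesis at $y = T(x)$ with AM--GM in the form $S'(x) \ge T'(x)^{\lambda}$ gives the pointwise lower bound
\[
h(S(x))\, S'(x) \;\ge\; f(x)^{1-\lambda} g(T(x))^{\lambda}\, S'(x) \;\ge\; f(x)^{1-\lambda} \bigl[g(T(x))\, T'(x)\bigr]^{\lambda} \;=\; f(x).
\]
Integrating in $x$ and changing variables along the strictly increasing map $z = S(x)$,
\[
1 \;=\; \int_{\R} f \;\le\; \int_{\R} h(S(x))\, S'(x)\,dx \;\le\; \int_{\R} h \;\le\; 1+\varepsilon.
\]
Hence the AM--GM defect
\[
D_{\mathrm{AG}} \;:=\; \int_{\R} \bigl[(1-\lambda) + \lambda T'(x) - T'(x)^{\lambda}\bigr]\, f(x)^{1-\lambda} g(T(x))^{\lambda}\,dx
\]
and the Pr\'ekopa--Leindler defect $D_{\mathrm{PL}} := \int_{\R} \bigl[h(S(x)) - f(x)^{1-\lambda} g(T(x))^{\lambda}\bigr]\, S'(x)\,dx$ are both nonnegative and satisfy $D_{\mathrm{AG}} + D_{\mathrm{PL}} \le \varepsilon$.

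\textbf{From the defect to $T$ close to a translation.} The quantitative AM--GM inequality $(1-\lambda) + \lambda t - t^{\lambda} \gtrsim \lambda(1-\lambda)\,\min\{(t-1)^{2},\, |t-1|\}$ turns $D_{\mathrm{AG}} \le \varepsilon$ into an integrated smallness of $T'(x) - 1$ weighted by $f^{1-\lambda}(g \circ T)^{\lambda}$. Integration in $x$ forces $T(x) - x$ to concentrate around a single constant $w$ on a ``bulk'' region $B_{\delta} := \{f \ge \delta\} \cap T^{-1}\{g \ge \delta\}$, with the canonical choice $w := T(x_{1/2}) - x_{1/2}$ at the median $x_{1/2}$ of $f$ and $\delta$ taken to be a small power of $\varepsilon$. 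Log-concavity of $f, g$ ensures that the complementary tail has mass $\lesssim \delta\,|\log \delta|$, which is the origin of the $|\log \varepsilon|^{4/3}$ factor in the conclusion.

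\textbf{From $T$ to $L^{1}$-closeness of $f, g, h$.} Once $T(x) \approx x + w$ quantitatively on $B_{\delta}$, the identity $g(T(x))\, T'(x) = f(x)$ yields $\| g - f(\cdot - w) \|_{L^{1}} \lesssim \varepsilon^{c}$ by a standard comparison. Combining this with $D_{\mathrm{PL}} \le \varepsilon$---which asserts $h(S(x)) \approx f(x)^{1-\lambda} g(T(x))^{\lambda}$ along the transport---and the approximation $S(x) \approx x + \lambda w$, one deduces that both $\| h(\cdot + \lambda w) - f \|_{L^{1}}$ and $\| h(\cdot - (1-\lambda) w) - g \|_{L^{1}}$ are of the claimed order $(\varepsilon/\tau)^{1/3} |\log \varepsilon|^{4/3}$; the factor $\tau^{-1/3}$ enters through $\lambda(1-\lambda) \ge \tau$ in the AM--GM stability constant.

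\textbf{Main obstacle.} The architecture above is the standard transport-based stability strategy, and the genuine difficulty lies in the exponent bookkeeping. Each error source---$D_{\mathrm{PL}}$, $D_{\mathrm{AG}}$, and the truncation at scale $\delta$---naturally produces a bound of the form $\varepsilon^{\alpha_{i}}$ with distinct $\alpha_{i}$, and one has to interpolate them (typically via H\"older) while optimizing $\delta$ to extract the single exponent $\tfrac{1}{3}$ together with the $|\log \varepsilon|^{4/3}$ correction; any suboptimal split degrades the final power. Carrying this interpolation out uniformly across the wide shape variability of log-concave densities (very tall and thin vs.\ nearly flat) is the principal technical challenge.
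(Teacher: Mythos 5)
Note first that the paper does not prove this statement: Theorem~\ref{PLstab-logconv-dim1} is imported from \cite{BallBoroczky,BoroczkyDe}, so your attempt can only be judged on its own merits. Your skeleton (monotone transport $T$ with $g(T)T'=f$, the map $S=(1-\lambda)\mathrm{id}+\lambda T$, splitting the deficit into an AM--GM defect $D_{\mathrm{AG}}$ and a Pr\'ekopa--Leindler defect $D_{\mathrm{PL}}$ with $D_{\mathrm{AG}}+D_{\mathrm{PL}}\le\varepsilon$) is sound and is indeed the standard route to results of this type. But as written it is not a proof of the theorem, because the theorem \emph{is} its quantitative conclusion, and the two steps that produce the rate $(\varepsilon/\tau)^{1/3}|\log\varepsilon|^{4/3}$ are only asserted: ``integration in $x$ forces $T(x)-x$ to concentrate around a single constant $w$'', ``by a standard comparison'', ``one deduces''. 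Your closing paragraph concedes that the interpolation and the optimization in $\delta$ --- precisely the content of the statement --- are left undone, so no admissible exponent (let alone $1/3$ with the stated $\tau$- and $\log$-dependence) is actually established.

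Beyond the missing bookkeeping, two asserted steps need genuine arguments. (i) The weight in $D_{\mathrm{AG}}$ is $f^{1-\lambda}(g\circ T)^{\lambda}=f\,(T')^{-\lambda}$, which degenerates exactly where $T'$ is large, i.e.\ where $T-\mathrm{id}$ could jump; to convert $D_{\mathrm{AG}}\le\varepsilon$ into closeness of $T$ to a translation you must first restrict to a bulk where both $f\ge\delta$ and $g\circ T\ge\delta$, and for the resulting bounds (and for the tail estimate $\lesssim\delta|\log\delta|$, which presupposes $\|f\|_\infty\simeq\|g\|_\infty\simeq 1$ after a \emph{single} rescaling) you need the analogue of Lemma~\ref{thm:control-sup} comparing $\|f\|_\infty$ and $\|g\|_\infty$; this normalization issue is nowhere addressed, and without it the ``tall thin vs.\ nearly flat'' discrepancy you mention is not a side remark but an obstruction to your $\delta$-truncation. (ii) Passing from ``$T-\mathrm{id}\approx w$ on the bulk'' and $D_{\mathrm{PL}}\le\varepsilon$ to $\int|h(\cdot+\lambda w)-f|$ small is not a pointwise substitution: $D_{\mathrm{PL}}$ controls $h(S(x))S'(x)-f(x)(T'(x))^{-\lambda}S'(x)$ only in integrated form, and comparing $h(S(x))$ with $h(x+\lambda w)$ requires a translation-continuity/BV estimate for $h$ (using $h$ unimodal with total variation $2\|h\|_\infty$ and a bound on $\|h\|_\infty$, cf.\ Lemma~\ref{logconvdim1}) together with a change of variables in which $S'$ is not exactly $1$; each of these steps costs a power of $\varepsilon$ and a $|\log\varepsilon|$, and it is exactly this accounting that decides whether one lands at $\varepsilon^{1/3}$ or at something weaker. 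In short: correct strategy, but the quantitative core of the theorem is missing, so the statement is not proved by the proposal.
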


Next, we prove some basic properties of log-concave functions. We observe that 
if $\varphi$ is log-concave and $0<\int_\R\varphi<\infty$, then the level sets are segments, $\varphi$ is bounded, and its essential supremum coincides with its supremum $\|\varphi\|_\infty$.

\begin{lemma}
\label{logconvdim1}
Let $\varphi$ be a log-concave function with $0<\int_\R\varphi<\infty$. Then:
\begin{description}
\item[(i)] $\H^1(\{\varphi>\|\varphi\|_\infty-s\})\geq \frac{\|\varphi\|_1}{\|\varphi\|_\infty^2} s$
\ \ provided $0<s<\|\varphi\|_\infty$;
\item[(ii)] $\H^1(\{\varphi>t\}) \leq \frac{2\|\varphi\|_1}{\|\varphi\|_\infty}\left|\log\frac{t}{\|\varphi\|_\infty}\right|$
\ \ provided $0<t\leq\frac12\,\|\varphi\|_\infty$;
\item[(iii)] $\int_{\{\varphi<t\}}\varphi \leq \frac{2\|\varphi\|_1}{\|\varphi\|_\infty} t$
\ \ provided $0<t\leq\frac12\,\|\varphi\|_\infty$.
\end{description}
\end{lemma}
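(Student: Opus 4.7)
The plan is to derive all three bounds from a single geometric observation: since $\log\varphi$ is concave with maximum $\log M$ (where $M=\|\varphi\|_\infty$) attained at some $x_0\in\R$, on each side of $x_0$ the graph of $\log\varphi$ sits above every chord anchored at $(x_0,\log M)$ and below every tangent line taken at the boundary of a level set. I fix such an $x_0$, write each level set $\{\varphi>t\}$ as an interval $(a(t),b(t))$, and translate these facts into pointwise bounds on $\varphi$ either outside or inside the level set.

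For (i), set $I=\{\varphi>M-s\}=(a,b)$ with length $\Lambda=b-a$. Inside $I$, the trivial estimate $\varphi\le M$ gives $\int_I\varphi\le M\Lambda$. For $x\ge b$, concavity forces the right derivative of $\log\varphi$ at $b$ to be no larger than the slope of the chord from $(x_0,\log M)$ to $(b,\log(M-s))$, so
\[
 \log\varphi(x)\;\leq\;\log(M-s)-\frac{\log(M/(M-s))}{b-x_0}(x-b),
\]
and integrating produces $\int_b^\infty\varphi\le (M-s)(b-x_0)/\log(M/(M-s))$. Adding the symmetric estimate on the left side yields $\|\varphi\|_1\le\Lambda\bigl[M+(M-s)/\log(M/(M-s))\bigr]$, and the bracket collapses to $M^2/s$ via the elementary inequality $-\log(1-u)\ge u$ with $u=s/M$. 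This is (i).

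Parts (ii) and (iii) follow an entirely parallel recipe applied at the boundary $b(t)$ of $\{\varphi>t\}$. For (ii), concavity yields the chord-from-below $\varphi(x)\geq M(t/M)^{(x-x_0)/(b(t)-x_0)}$ on $[x_0,b(t)]$; integrating, and using $t\le M/2$ to absorb the factor $1-t/M\ge 1/2$, gives $\int_{x_0}^{b(t)}\varphi\geq M(b(t)-x_0)/(2\log(M/t))$. Adding the symmetric left-side bound and then using $\int_{\{\varphi>t\}}\varphi\le\|\varphi\|_1$ produces (ii). For (iii) I rerun the tangent-from-above argument of (i) at height $t$: $\varphi(x)\le t(t/M)^{(x-b(t))/(b(t)-x_0)}$ for $x\ge b(t)$, so $\int_{b(t)}^\infty\varphi\le t(b(t)-x_0)/\log(M/t)$. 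Summing with the symmetric estimate yields $\int_{\{\varphi<t\}}\varphi\le t\,\H^1(\{\varphi>t\})/\log(M/t)$, and substituting the bound on $\H^1(\{\varphi>t\})$ from (ii) cancels the logarithm and leaves $2\|\varphi\|_1t/M$.

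I expect no serious obstacle; the argument is essentially a routine manipulation of concavity of $\log\varphi$ together with the elementary logarithmic inequality $-\log(1-u)\geq u$. The only minor care concerns the case where $M$ is not attained pointwise, or is attained on a whole interval: one then selects $x_0$ to be any point of the (essential) plateau, and since the chord and tangent estimates above only require concavity of $\log\varphi$ on each side of $x_0$, they remain valid.
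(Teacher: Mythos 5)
Your proof is correct and is essentially the paper's argument: both control $\varphi$ from above (outside a level set) and from below (inside it) by the exponential determined by the chord joining the maximum value to the level-set endpoint, both use $-\log(1-u)\ge u$ to conclude (i), and both use the length bound of (ii) to cancel the logarithm in (iii). The only real difference is that the paper first passes to the symmetric decreasing rearrangement (after which the supremum is attained at $0$ and the two sides coincide), whereas you work on both sides of a maximizer $x_0$ directly; this is why you must comment on non-attained suprema, and your closing remark there, though loosely phrased, is easily made rigorous (take chords anchored at points where $\varphi$ exceeds $\|\varphi\|_\infty-\delta$ and let $\delta\to 0$), so there is no gap.
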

\begin{proof}
Using symmetric decreasing rearrangement we can assume that $\varphi$ is even.
Also, by scaling, we may also suppose that $\varphi(0)=\|\varphi\|_\infty=\int_\R\varphi=1$.

For (i), let $x_0=\sup\{x:\,\varphi(x)>1-s\}=\frac12\,\H^1(\{\varphi>1-s\})$, and choose $\gamma>0$ such that
 $1-s=e^{-\gamma\,x_0}$. It follows from the log-concavity and the evenness of $\varphi$ that 
$\varphi(x)\leq 1$ if $|x|\leq |x_0|$, and 
$\varphi(x)\leq e^{-\gamma\,|x|}$ if $|x|\geq |x_0|$.
Also, since $e^{-\gamma\,x_0}>1-\gamma\,x_0$ we get $\frac1{\gamma}< \frac{x_0}{s}$, thus
$$
1= \int_\R\varphi\leq
2x_0+
2\int_{x_0}^\infty e^{-\gamma\,x}\,dx=2x_0+\frac{2\,e^{-\gamma\,x_0}}{\gamma}
< 2x_0\left(1+\frac{1-s}{s}\right)=\frac{2x_0}{s}.
$$

For (ii) and (iii), let  $x_1=\sup\{x:\,\varphi(x)>t\}=\frac12\,\H^1(\{\varphi>t\})$, and choose $\delta>0$ such that
 $t=e^{-\delta\,x_1}$. It follows again by log-concavity and evenness that
$\varphi(x)\geq e^{-\delta\,|x|}$ if $|x|\leq |x_1|$, and 
$\varphi(x)\leq e^{-\delta\,|x|}$ if $|x|\geq |x_1|$.

Then, on the one hand, we have
\begin{equation}
\label{x1logt}
\frac12\geq \int_0^{x_1} e^{-\delta\,x}\,dx=\frac{1-e^{-\delta\,x_1}}{\delta}=\frac{1-t}{\delta}\geq 
\frac{1}{2\delta}=\frac{x_1}{2|\log t|},
\end{equation}
verifying (ii). On the other hand, using \eqref{x1logt} we get
$$
\int_{\{\varphi<t\}}\varphi\leq 
2 \int_{x_1}^\infty e^{-\delta\,x}\,dx=\frac{2e^{-\delta\,x_1}}{\delta}=\frac{2tx_1}{|\log t|}\leq 2t,
$$
verifying (iii).
\end{proof}

Given $\varepsilon\in(0,1]$, $\tau\in(0,\frac12]$, and $\lambda\in[\tau,1-\tau]$,
we now consider measurable functions $f,g,h:\,\R\to\R_{\geq 0}$ with positive integral 
satisfying
\begin{eqnarray}
\label{fghepscond}
h((1-\lambda)x+\lambda\,y)&\geq&f(x)^{1-\lambda}g(y)^\lambda\mbox{ \ \ for }x,y\in\R\\
\label{fgheps}
\int_\R h&<& (1+\varepsilon)
\left(\int_\R f\right)^{1-\lambda}\left(\int_\R g\right)^{\lambda}.
\end{eqnarray}
For $t>0$, we set
\begin{equation}
\label{ABCt}
A_t=\{f\geq t\},\qquad B_t=\{g\geq t\},\quad \mbox{ and }\quad C_t=\{h\geq t\},
\end{equation}
so that
$$
A_t=\bigcap_{0<s<t}A_s,\qquad B_t=\bigcap_{0<s<t}B_s,\quad \mbox{ and }\quad C_t=\bigcap_{0<s<t}C_s.
$$
It follows from \eqref{fghepscond} that if $A_t,B_s\neq\emptyset$ for $t,s>0$, then
\begin{equation}
\label{ABCtlambda}
(1-\lambda)A_t+\lambda\,B_s\subset C_{t^{1-\lambda}s^{\lambda}}.
\end{equation}

\begin{lemma}
\label{fgbounded}
Let $f,g,h$ satisfy \eqref{fghepscond} and \eqref{fgheps}. Then $f$ and $g$ are bounded.
\end{lemma}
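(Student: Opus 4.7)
I would prove this by contradiction, exploiting the Prékopa--Leindler hypothesis to translate unboundedness of $f$ into a lower bound on $\int_\R h$ that diverges.

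First, I would extract the two ingredients available from the hypotheses. Since $g$ has positive integral, there exists $s > 0$ such that $|B_s| = \H^1(\{g \geq s\}) > 0$. Moreover, the inequality \eqref{fgheps} together with the finiteness of $\int_\R f$ and $\int_\R g$ (implicit in ``positive integral'') gives $\int_\R h < \infty$. Now suppose, for contradiction, that $f$ fails to be essentially bounded. Then, for every $M > 0$, the superlevel set $A_M = \{f \geq M\}$ has positive Lebesgue measure.

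The core of the argument is the inclusion \eqref{ABCtlambda}, which yields
\[
(1-\lambda)A_M + \lambda B_s \subset C_{M^{1-\lambda} s^\lambda}.
\]
Approximating $A_M$ and $B_s$ from inside by compact sets of almost full measure and applying the one-dimensional Brunn--Minkowski inequality \eqref{BrunnMinkowski} to those compact pieces (so as to avoid any measurability issue with the Minkowski sum), one obtains
\[
|C_{M^{1-\lambda} s^\lambda}| \geq (1-\lambda) |A_M| + \lambda |B_s| \geq \lambda |B_s| > 0.
\]
Chebyshev's inequality then gives
\[
\int_\R h \;\geq\; M^{1-\lambda} s^\lambda \, |C_{M^{1-\lambda} s^\lambda}| \;\geq\; \lambda s^\lambda |B_s| \, M^{1-\lambda}.
\]
Letting $M \to \infty$, the right-hand side diverges, contradicting $\int_\R h < \infty$. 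Thus $f$ is essentially bounded, and swapping the roles of $f$ and $g$ yields the same conclusion for $g$.

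There is essentially no hard step here: the proof needs neither the almost-equality strength of \eqref{fgheps} nor any dimensional analysis, only the qualitative finiteness $\int_\R h < \infty$, the pointwise PL constraint \eqref{fghepscond}, and BM in dimension one. The only mildly technical point is that $A_M$ and $B_s$ need not be bounded as subsets of $\R$, so one must either quote the version of one-dimensional Brunn--Minkowski valid for measurable sets of finite measure, or reduce to the compact case by inner approximation as I did above.
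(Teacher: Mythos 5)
Your proof is correct, but it takes a genuinely different route from the paper's. The paper argues directly and without contradiction: fixing any $x_0$ with $f(x_0)>0$ and integrating the pointwise hypothesis \eqref{fghepscond} in $y$ along the line $z=(1-\lambda)x_0+\lambda y$ gives $\int_\R h\ge f(x_0)^{1-\lambda}\lambda\int_\R g^\lambda$, and since $\int_\R g^\lambda>0$ (because $g$ has positive integral) this bounds $f(x_0)$ uniformly; no level sets, no Brunn--Minkowski, no inner approximation. Your argument instead runs through the level-set inclusion \eqref{ABCtlambda}, one-dimensional Brunn--Minkowski applied to compact inner approximations, and Chebyshev. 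Two small remarks on the comparison. First, because you insist on $\H^1(A_M)>0$, you only conclude that $f$ is \emph{essentially} bounded, whereas the paper's computation bounds $f$ at every point; for the way the lemma is used afterwards (to speak of $\|f\|_\infty$, $\|g\|_\infty$ and to know that level sets strictly below $\|f\|_\infty$ are nonempty, e.g.\ in Lemma~\ref{thm:control-sup}) the essential bound suffices, but it is formally weaker than the stated ``bounded''. Second, Brunn--Minkowski is not actually needed in your scheme: taking a single point $a\in A_M$ (nonemptiness is enough, which also removes the first caveat) one has $(1-\lambda)a+\lambda B_s\subset C_{M^{1-\lambda}s^\lambda}$, hence $\H^1(C_{M^{1-\lambda}s^\lambda})\ge\lambda\H^1(B_s)>0$ by translation invariance alone, and your Chebyshev step then yields the uniform pointwise bound $f(x_0)^{1-\lambda}\le \int_\R h\,/\,(\lambda s^\lambda \H^1(B_s))$. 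What your route buys is robustness---it only uses the qualitative finiteness of $\int_\R h$ and one good level of $g$, in the same level-set spirit as the rest of Section~\ref{sec:tail-estimates}---while the paper's one-line integration is shorter and gives the pointwise statement directly.
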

\begin{proof}
For any $x_0\in \R$ with $f(x_0)>0$, we have
$$
2\left(\int_\R f\right)^{1-\lambda}\left(\int_\R g\right)^{\lambda}>\int_\R h\geq
\int_\R f(x_0)^{1-\lambda}g\mbox{$\left(\frac{1}{\lambda}\,z-\frac{1-\lambda}{\lambda}\,x_0\right)^\lambda$}\,dz
=f(x_0)^{1-\lambda}\lambda\int_\R g^\lambda;
$$
therefore, $f$ is bounded. Similarly, $g$ is bounded, as well.
\end{proof}

We use the following stability version of the inequality between the arithmetic and geometric mean. It follows from 
Lemma~2.1 in Aldaz \cite{Ald08} that if $a,b>0$ and $\lambda\in[\tau,1-\tau]$ for $\tau\in(0,\frac12]$, then
\begin{equation}
\label{Holderstab}
(1-\lambda)a+\lambda b-a^{1-\lambda}b^\lambda\geq \tau\left(\sqrt{a}-\sqrt{b}\right)^2.
\end{equation}
According to Lemma~\ref{fgbounded}, we can speak about $\|f\|_{\infty}$ and $\|g\|_{\infty}$.

\begin{lemma}
\label{thm:control-sup} 
Let $f,g,h$ satisfy
\eqref{fghepscond} and \eqref{fgheps}. If $\varepsilon<2^{-6}\tau^3$, then  
\[
\left|\frac{\|f\|_{\infty}}{\|g\|_{\infty}}\cdot \frac{\|g\|_{1}}{\|f\|_{1}} - 1\right| \leq 4\tau^{-\frac32}\varepsilon^{\frac12}.
\]
\end{lemma}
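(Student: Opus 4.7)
The plan is to rescale $f$ and $g$ so that both have unit sup norm, reformulate the target quantity as a ratio of $L^1$ norms of the rescaled functions, and then combine the one-dimensional Brunn--Minkowski inequality on super-level sets with the AM--GM stability \eqref{Holderstab}.

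Concretely, set $\tilde f = f/\|f\|_\infty$, $\tilde g = g/\|g\|_\infty$, and $\tilde h = h/(\|f\|_\infty^{1-\lambda}\|g\|_\infty^{\lambda})$. One checks immediately that $\tilde f,\tilde g,\tilde h$ again satisfy \eqref{fghepscond} and \eqref{fgheps} with the same $\varepsilon$. Writing
\[
P := \|\tilde f\|_1 = \|f\|_1/\|f\|_\infty, \qquad Q := \|\tilde g\|_1 = \|g\|_1/\|g\|_\infty,
\]
the quantity in the lemma is exactly $Q/P$. Applying \eqref{ABCtlambda} with $t_1 = t_2 = s \in (0,1)$ gives $(1-\lambda)\{\tilde f \geq s\} + \lambda\{\tilde g \geq s\} \subset \{\tilde h \geq s\}$, so the one-dimensional Brunn--Minkowski inequality yields $|\{\tilde h \geq s\}| \geq (1-\lambda)|\{\tilde f \geq s\}| + \lambda|\{\tilde g \geq s\}|$. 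Since $\|\tilde f\|_\infty = \|\tilde g\|_\infty = 1$, the layer-cake formula gives $\int_\R \tilde f = \int_0^1 |\{\tilde f \geq s\}|\,ds = P$ and similarly $\int_\R \tilde g = Q$, so integrating over $s \in (0,1)$ and combining with $\int_\R \tilde h < (1+\varepsilon)\,P^{1-\lambda}Q^{\lambda}$ produces
\[
(1-\lambda) P + \lambda Q \;\leq\; (1+\varepsilon)\, P^{1-\lambda} Q^{\lambda}.
\]
Applying \eqref{Holderstab} with $a=P$, $b=Q$ converts this into the key inequality
\[
\tau\bigl(\sqrt P - \sqrt Q\bigr)^2 \;\leq\; \varepsilon\, P^{1-\lambda} Q^{\lambda}.
\]

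The last step is to translate this into a bound on $Q/P-1$. Setting $r := \sqrt{Q/P}$, the display above reads $\tau(1-r)^2 \leq \varepsilon\, r^{2\lambda}$. The main subtlety is a short bootstrap that handles the asymmetry between $r\leq 1$ and $r>1$: if $r \geq 2$, then $r^{2\lambda} \leq r^2$ and $(1-r)^2/r^2 = (1-1/r)^2 \geq 1/4$, which would force $\varepsilon \geq \tau/4$ and contradict the hypothesis $\varepsilon < 2^{-6}\tau^3 \leq \tau/4$ (using $\tau \leq 1/2$). Thus $r < 2$, so $r^{2\lambda} \leq \max(1, r^2) < 4$, whence $|r-1| \leq 2\sqrt{\varepsilon/\tau}$, and finally
\[
\left|\frac{Q}{P}-1\right| = |r-1|(r+1) \;\leq\; 6\,\tau^{-1/2}\varepsilon^{1/2} \;\leq\; 4\,\tau^{-3/2}\varepsilon^{1/2},
\]
where in the last step we used $\tau \leq 1/2$ (so that $6\tau \leq 4$). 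The only genuinely delicate point in the argument is precisely this bootstrap to rule out large $r$; everything else is a direct application of the tools already assembled in this section.
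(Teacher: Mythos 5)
Your proof is correct and follows essentially the same route as the paper: both arguments reduce, via the level-set inclusion \eqref{ABCtlambda}, the one-dimensional Brunn--Minkowski inequality and the layer-cake formula, to a scalar inequality of the form $(1-\lambda)a+\lambda b\le(1+\varepsilon)\,a^{1-\lambda}b^{\lambda}$, and then invoke the stability bound \eqref{Holderstab}. The only differences are in bookkeeping: the paper normalizes $\|f\|_1=\|g\|_1=1$ and compares the shifted levels $\theta^{\lambda}t,\ \theta^{\lambda-1}t$, finishing with a logarithmic estimate on $\theta$, whereas you normalize the sup norms, compare equal levels, and finish with the bootstrap on $r=\sqrt{Q/P}$ --- both endgames are sound.
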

\begin{proof}  We may assume that $\int_\R f=\int_\R g=1$.

We set $\theta=\|f\|_{\infty}/\|g\|_{\infty}$.
Using the notation \eqref{ABCt}, it follows from \eqref{fghepscond} that if $0<t<\|f\|_{\infty}^{1-\lambda}\|g\|_{\infty}^\lambda$, then
$$
(1-\lambda) A_{\theta^{\lambda}t}+\lambda\, B_{\theta^{\lambda-1}t}\subset C_t.
$$
We deduce from \eqref{ABCtlambda} and the one-dimensional Brunn-Minkowski inequality that
\begin{eqnarray*}
1+\varepsilon&\geq&\int_\R h\geq \int_0^{\|f\|_{\infty}^{1-\lambda}\|g\|_{\infty}^\lambda} \H^1(C_t)\,dt\\
&\geq&
(1-\lambda) \int_0^{\|f\|_{\infty}^{1-\lambda}\|g\|_{\infty}^\lambda} \H^1(A_{\theta^{\lambda}t})\,dt+
\lambda \int_0^{\|f\|_{\infty}^{1-\lambda}\|g\|_{\infty}^\lambda} \H^1(B_{\theta^{\lambda-1}t})\,dt\\
&=&\frac{1-\lambda}{\theta^{\lambda}} \int_0^{\|f\|_{\infty}} \H^1(A_{s})\,ds+
\lambda\,\theta^{1-\lambda} \int_0^{\|g\|_{\infty}} \H^1(B_{s})\,ds
=\frac{1-\lambda}{\theta^{\lambda}}+\lambda\,\theta^{1-\lambda}.
\end{eqnarray*}
We conclude from \eqref{Holderstab} that
$$
\left|\theta^{-\frac{\lambda}2}- \theta^{\frac{1-\lambda}2}\right|<\tau^{-\frac12}\varepsilon^{\frac12},
$$
which in turn yields that
$$
\tau^{-\frac12}\varepsilon^{\frac12}>e^{\frac{\tau|\log\theta|}2}-1>\frac{\tau|\log\theta|}2.
$$
Since $|\log\theta|<2\tau^{-\frac32}\varepsilon^{\frac12}\leq \frac14$ provided $\varepsilon\leq \tau^3/64$, 
we have 
$|\theta-1|<4\tau^{-\frac32}\varepsilon^{\frac12}$.
\end{proof}

\begin{lemma}
\label{thm:cutting-support} 
Let $f,g,h$ satisfy
\eqref{fghepscond} and \eqref{fgheps}. If $\varepsilon^{\frac12}\leq \eta<1$, then  
\[
\H^1(\{f\geq \eta\|f\|_\infty\})\lesssim  \frac{\tau^{-\frac52}\|f\|_1}{\|f\|_\infty}\cdot|\log \varepsilon|^{\frac4\tau} ,\qquad
\H^1(\{g \geq \eta\|g\|_\infty\}) \lesssim  \frac{\tau^{-\frac52}\|g\|_1}{\|g\|_\infty}\cdot|\log \varepsilon|^{\frac4\tau},
\]
and 
\[
\int_{\{f < \eta\}} f
\lesssim \tau^{-\frac52}\|f\|_1\cdot\eta\,|\log \varepsilon|^{\frac4\tau},\qquad \int_{\{g <\eta\}} g \lesssim \tau^{-\frac52}\|g\|_1\cdot\eta\,|\log \varepsilon|^{\frac4\tau}.
\]
\end{lemma}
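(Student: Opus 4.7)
I will deduce both tail estimates from the Brunn--Minkowski inclusion for the super-level sets of $f,g,h$ together with the almost-equality in integrated form, and then bootstrap to trade powers of $\varepsilon$ for polylogarithmic factors.

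\textbf{Setup.} By the scaling invariance of the Pr\'ekopa--Leindler inequality, I may normalise $\int_\R f=\int_\R g=1$. Replacing $h$ by the pointwise sup-convolution $\sup_{z=(1-\lambda)x+\lambda y}f(x)^{1-\lambda}g(y)^\lambda$ (which only strengthens \eqref{fgheps}) I may assume $\|h\|_\infty\le M:=\|f\|_\infty^{1-\lambda}\|g\|_\infty^\lambda$. By Lemma~\ref{thm:control-sup}, the quantities $\|f\|_\infty$, $\|g\|_\infty$ and $M$ are mutually comparable up to a factor $1+O(\tau^{-3/2}\varepsilon^{1/2})$. Writing $\mu_\bullet(\eta):=\H^1(\{\bullet\ge\eta\|\bullet\|_\infty\})$ for $\bullet\in\{f,g,h\}$, the inclusion~\eqref{ABCtlambda} together with the one-dimensional Brunn--Minkowski inequality gives the master inequality
\[\mu_h(\eta_1^{1-\lambda}\eta_2^\lambda)\;\ge\;(1-\lambda)\mu_f(\eta_1)+\lambda\mu_g(\eta_2)\qquad\forall\,\eta_1,\eta_2\in(0,1].\]
From $\int h\le 1+\varepsilon$ and the monotonicity of $\mu_h$, layer-cake gives the Markov-type bound $\mu_h(\eta)\le(1+\varepsilon)/(\eta M)$; integrating the master inequality at $\eta_1=\eta_2=\eta$ over $(0,1)$ and comparing with $\int h$ yields the integrated slack
\[M\int_0^1\bigl[\mu_h(\eta)-(1-\lambda)\mu_f(\eta)-\lambda\mu_g(\eta)\bigr]\,d\eta\;\lesssim\;\varepsilon+\tau^{-3/2}\varepsilon^{1/2}.\]

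\textbf{First estimate via bootstrap.} Specialising the master inequality to $\eta_2=1$ and combining with the Markov bound gives a crude estimate $\mu_f(\eta)\lesssim(\tau M\eta^{1-\lambda})^{-1}$, which at $\eta=\varepsilon^{1/2}$ is still of order $\varepsilon^{-(1-\lambda)/2}/M$---exponential in $|\log\varepsilon|$. I then bootstrap: assuming a bound of the form $\mu_f(\eta)\le K/(M\eta^\delta)$ for $\eta\ge\eta_*$, I feed it back into the master inequality at a carefully chosen asymmetric pair $(\eta_1,\eta_2)$ and absorb the excess using the integrated slack, producing the improved bound $\mu_f(\eta)\le K'/(M\eta^{\delta(1-\lambda)})$ with $K'=O(\tau^{-1}|\log\eta|)K$. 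After $k=\lceil C/\tau\rceil$ rounds the exponent $\delta(1-\lambda)^k$ is driven below any positive threshold, and the cumulative multiplicative loss is $\tau^{-O(1)}|\log\varepsilon|^{O(1/\tau)}$; a careful bookkeeping gives $\mu_f(\eta)\lesssim\tau^{-5/2}|\log\varepsilon|^{4/\tau}/M$ for all $\eta\ge\varepsilon^{1/2}$. The symmetric argument handles $g$.

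\textbf{Second estimate.} By layer-cake,
\[\int_{\{f<\eta\|f\|_\infty\}}f\;\le\;\|f\|_\infty\int_0^\eta\mu_f(u)\,du.\]
On $u\in[\varepsilon^{1/2},\eta]$ the first estimate applies directly and contributes $\lesssim\eta\,\tau^{-5/2}|\log\varepsilon|^{4/\tau}$. On the range $u\in(0,\varepsilon^{1/2}]$ I use the identity $\|f\|_\infty\int_0^{\varepsilon^{1/2}}\mu_f(u)\,du=\int f\wedge(\varepsilon^{1/2}\|f\|_\infty)\le\varepsilon^{1/2}\|f\|_\infty\mu_f(\varepsilon^{1/2})+\int_{\{f<\varepsilon^{1/2}\|f\|_\infty\}}f$; applying the first estimate to $\mu_f(\varepsilon^{1/2})$ and closing the loop with a short Gr\"onwall-type manipulation yields the bound $\lesssim\varepsilon^{1/2}\tau^{-5/2}|\log\varepsilon|^{4/\tau}\le\eta\,\tau^{-5/2}|\log\varepsilon|^{4/\tau}$, as desired.

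\textbf{Main obstacle.} The delicate point is the bootstrap: a naive iteration inflates the multiplicative constant by $O(\tau^{-1})$ at each step, producing a final prefactor $\tau^{-\Theta(1/\tau)}$ rather than the polynomial $\tau^{-5/2}$. The trick is to choose the auxiliary threshold $\eta_2$ at each round so that only an $O(|\log\eta|)$ factor is lost, and to use the integrated slack~-~rather than the pointwise Markov bound~-~to soak up the AM--GM deficit. This is where the exponent $4/\tau$ emerges, and identifying an \emph{Ansatz} stable under the iteration is the heart of the argument.
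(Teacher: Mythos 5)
Your skeleton (the level-set inclusion \eqref{ABCtlambda} plus one-dimensional Brunn--Minkowski, an integrated slack of size $O(\tau^{-3/2}\eps^{1/2})$, and an iteration that replaces levels $t$ by $t^{1/(1-\lambda)}$) is the same as the paper's, but the step you yourself identify as the heart of the argument --- the bootstrap --- does not deliver the stated bound, and the bookkeeping you sketch is internally inconsistent. First, the slack $\mu_h-(1-\lambda)\mu_f-\lambda\mu_g$ is controlled only after integration in the level; to improve $\mu_f(\eta)$ pointwise you need it at the specific level $\eta_1^{1-\lambda}\eta_2^{\lambda}$, which forces a Chebyshev/exceptional-set selection (the slack can be as large as $1/(sM)$ at small levels $s$) that you never spell out and that costs further factors. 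Second, with a per-round loss $K'=O(\tau^{-1}|\log\eta|)K$, already $k=\lceil C/\tau\rceil$ rounds produce a prefactor $\tau^{-\Theta(1/\tau)}$ --- exactly the blow-up you claim the choice of $\eta_2$ avoids --- and, worse, $k\simeq 1/\tau$ rounds only reduce the exponent from $\delta$ to about $\delta e^{-C}$, a fixed positive constant, so at $\eta=\eps^{1/2}$ you still retain a positive power of $1/\eps$, not a polylog. To drive $\delta(1-\lambda)^k$ below $1/|\log\eps|$ you need $k\simeq\tau^{-1}\log|\log\eps|$ rounds, and then a per-round factor of order $|\log\eps|$ accumulates to $e^{c(\log|\log\eps|)^2/\tau}$, which exceeds the target $|\log\eps|^{4/\tau}$. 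The paper sidesteps all of this by iterating the \emph{integrated} quantity $\Gamma(\alpha)=\int_0^\alpha\big((1-\lambda)\H^1(A_t)+\lambda\H^1(B_t)\big)\,dt$, for which the functional inequality \eqref{eq:iterate} has the absolute constant $4$ per step (the $\tau^{-3/2}\eps^{1/2}$ term enters only once and is summable because $\beta\ge\eps^{1/2}$); this is what makes $4^k\simeq|\log\beta|^{4/\tau}$, hence $\tau^{-5/2}|\log\eps|^{4/\tau}$, attainable, and the pointwise statement is then read off cheaply from $\Gamma(\beta)/\beta\ge\tau\big(\H^1(A_\beta)+\H^1(B_\beta)\big)$ by monotonicity. (A minor additional point: replacing $h$ by the sup-convolution is both unnecessary and problematic, since for merely measurable $f,g$ that function need not be measurable; Markov for $h$ at levels relative to $M$ suffices.)

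The second estimate has a further genuine gap: your treatment of the range $u\in(0,\eps^{1/2}]$ is circular. The identity $\|f\|_\infty\int_0^{\eps^{1/2}}\mu_f(u)\,du=\eps^{1/2}\|f\|_\infty\mu_f(\eps^{1/2})+\int_{\{f<\eps^{1/2}\|f\|_\infty\}}f$ is just a rearrangement of layer cake; the unknown far-tail mass $\int_{\{f<\eps^{1/2}\|f\|_\infty\}}f$ reappears untouched, there is no contraction on which to run a Gr\"onwall scheme, and none of your pointwise bounds (valid only for levels $\ge\eps^{1/2}$) says anything about the mass carried by levels below $\eps^{1/2}$; the trivial bound by $\|f\|_1$ is far too weak when $\eta$ is small. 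This tail can only be controlled through the integrated information: in the paper one simply has $\int_{\{f<\eta\}}f\lesssim\tau^{-1}\Gamma(\eta)$, and the bound $\Gamma(\beta)\lesssim\tau^{-3/2}\beta|\log\beta|^{4/\tau}$ for $\beta\ge\eps^{1/2}$ automatically accounts for all smaller levels. Repairing your argument essentially forces you back to the integrated iteration, i.e.\ to the paper's proof.
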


\begin{proof} 
We may assume that $\|f\|_{\infty}=\|g\|_{\infty} = 1$ and $\min\{\int_\R f,\int_\R g\}=1$, so that
Lemma~\ref{thm:control-sup} yields 
\begin{equation}
\label{fgintest-cutting-support}
1=\min\left\{\int_\R f,\int_\R g\right\}\leq \max\left\{\int_\R f,\int_\R g\right\}\leq 1+4\tau^{-\frac32}\varepsilon^{\frac12}<2.
\end{equation}
For $t > 0$, it follows from \eqref{ABCtlambda} that if $\varrho\in(0,1)$, then
\begin{equation}\label{eq:power-containment-levels}
C_{\varrho\,t} 
\supset \left((1-\lambda)A_{t^{\frac{1}{1-\lambda}}} + \lambda\,B_{\varrho^{\frac1{\lambda}}}\right) \cup
\left((1-\lambda)A_{\varrho^{\frac{1}{1-\lambda}}}  + \lambda\,B_{t^{\frac{1}{\lambda}}}\right),
\end{equation}
thus letting $\varrho$ tending to $1$, the one-dimensional  Brunn-Minkowski inequality
 yields
\begin{equation}
\label{eq:contained-1}
\H^1(C_t)\geq \frac12\,\left[(1-\lambda)\H^1\left(A_{t^{\frac{1}{1-\lambda}}}\right) 
+\lambda\,\H^1\left(B_{t^{\frac{1}{\lambda}}}\right)\right].
\end{equation}
 In addition, $\H^1(C_t) - (1-\lambda)\H^1(A_t) -\lambda\,\H^1(B_t)\geq 0$ 
holds for any $t > 0$, thanks to \eqref{ABCtlambda} and the one-dimensional  Brunn-Minkowski inequality.

Therefore, using the near optimality  \eqref{fgheps} for the  Pr\'ekopa-Leindler inequality, \eqref{fgintest-cutting-support}, and \eqref{eq:contained-1},
we deduce that for any $\alpha\in(0,1]$, we have 
\begin{equation}
\label{eq:tau}
\begin{split}
8\tau^{-\frac{3}2}\varepsilon^{\frac12}
 &\ge \int_0^{\alpha} \left( \H^1(C_t) - (1-\lambda)\H^1(A_t) -\lambda\,\H^1(B_t)\right) \, d t \cr 
     &\ge \int_0^{\alpha} \left( \frac12\,\left[(1-\lambda)\H^1\left(A_{t^{\frac{1}{1-\lambda}}}\right) 
+\lambda\,\H^1\left(B_{t^{\frac{1}{\lambda}}}\right)\right]
 -(1-\lambda)\H^1(A_t) -\lambda\,\H^1(B_t)\right) \, d t.
\end{split}
\end{equation}
We now define 
$$
\Gamma(\alpha) := \int_0^{\alpha}\big((1-\lambda)\H^1(A_t) + \lambda\,\H^1(B_t)\big) \, d t.
$$ 
Note that $\Gamma$ is an increasing function bounded by $2$. Also, through a change of variables, it satisfies
\[
\int_0^{\alpha} \big((1-\lambda)\H^1(A_{t^{\frac1s}}) + \lambda\,\H^1(B_{t^{\frac1s}})\big) \, d t \ge 
s\alpha^{1-\frac1{s}} \Gamma(\alpha^{\frac1s})\qquad \forall\,s \in (0,1).
\]
Hence, assuming with no loss of generality that $\lambda \leq 1/2$, it follows from \eqref{eq:tau} that
\begin{equation}\label{eq:iterate}
8\tau^{-\frac{3}2}\varepsilon^{\frac12}
\geq \frac{1-\lambda}2\cdot \alpha^{-\frac\lambda{1-\lambda}}\, \Gamma(\alpha^{\frac{1}{1-\lambda}})- \Gamma(\alpha).
\end{equation}
As $1-\lambda\geq 1/2$, using the substitution $\beta=\alpha^{\frac{1}{1-\lambda}} \in (0,1)$, \eqref{eq:iterate} leads to   
$$
\frac{\Gamma(\beta)}{\beta} \le \frac{32\tau^{-\frac32}\eps^{\frac12}}{\beta^{1-\lambda}} + 
4\frac{\Gamma(\beta^{1-\lambda})}{\beta^{1-\lambda}},
$$
and, by iteration,
\begin{equation}\label{eq:iterate-2}
\frac{\Gamma(\beta)}{\beta} \le 32\tau^{-\frac32}\eps^{\frac12} \sum_{i=1}^k\frac{4^{i-1}}{\beta^{(1-\lambda)^{i}}} + 
4^k\frac{\Gamma(\beta^{(1-\lambda)^k})}{\beta^{(1-\lambda)^k}} \leq c\bigg(1+\tau^{-\frac32}\frac{\eps^{\frac12}}{\beta^{1-\lambda}}\biggr)\frac{4^k}{\beta^{(1-\lambda)^k}}\qquad \forall\, k\geq 1.
\end{equation}
Hence, if $\varepsilon^{\frac12}\leq \beta$, then \eqref{eq:iterate-2} yields
$$
\frac{\Gamma(\beta)}{\beta} \le c\tau^{-\frac32}\frac{4^k}{\beta^{(1-\lambda)^k}}.
$$
Choosing $k \in \left[\frac{|\log|\log\beta||}{|\log(1-\lambda)|},2\frac{|\log|\log\beta||}{|\log(1-\lambda)|}\right]$ so that $\beta^{(1-\lambda)^k}\simeq 1$, then the bound above gives (recall that $\lambda \geq \tau$ and that $|\log(1-\tau)|\simeq \tau$)
$$
\frac{\Gamma(\beta)}{\beta} \le c\tau^{-\frac32}4^{2\frac{|\log|\log\beta||}{\tau}}\leq c \tau^{-\frac32}|\log\beta|^{\frac4\tau}\qquad \forall\,\beta \in [\varepsilon^{\frac12},1).
$$
Since
$$
\frac{\Gamma(\beta)}{\beta}  \geq (1-\lambda)\H^1(A_\beta) + \lambda\,\H^1(B_\beta)\geq \tau \big( \H^1(A_t) + \H^1(B_t)\big),
$$
this proves the first part of the statement of the Lemma.

Finally, the layer cake formula yields $\int_{\{f<\eta\}}f+\int_{\{g<\eta\}}g\leq \Gamma(\eta)$, and the monotinicity of $A_t$ and $B_t$
imply $\H^1(\{f\geq \eta\})+\H^1(\{g\geq \eta\})\leq \Gamma(\eta)/\eta$, completing the proof of
Lemma~\ref{thm:cutting-support}.
\end{proof}

\begin{prop}
\label{thm:close-stable}
Let $f,g,h$ satisfy
\eqref{fghepscond} and \eqref{fgheps}. Let $\eta \geq \eps$, assume that
$\tau^{-\frac32} \eta^{\frac12}\leq c$
for certain absolute constant $c\in(0,1)$, and that there exist 
log-concave functions $\tilde{f},\tilde{g}$ such that 
\[
\|f-\tilde{f}\|_1 < \eta\|f\|_1\mbox{ \ and \ } \|g-\tilde{g}\|_1 < \eta\|g\|_1.
\]
Then, setting $a=\int_\R g/\int_\R f$, there exist a log-concave function $\tilde{h}$ and a constant $w\in\R$ such that
\begin{eqnarray*}
\int_{\R} |a^\lambda f(x) - \tilde{h}(x-\lambda\,w)| \, d x + 
\int_{\R} |a^{\lambda-1}g(x) -\tilde{h}(x+(1-\lambda)w)|\,dx
&\lesssim &\tau^{-1}\eta^{\frac1{12}}|\log\varepsilon|^{\frac43}
\int_\R h,\\
\int_{\R} |h(x) - \tilde{h}(x)| \, d x &\lesssim& \tau^{-2} \eta^{\frac14}|\log \varepsilon|
\int_\R h.
\end{eqnarray*}
\end{prop}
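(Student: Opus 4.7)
The plan is to construct $\tilde h$ as the Prékopa--Leindler sup-convolution of $\tilde f$ and $\tilde g$,
\[
\tilde h(z):=\sup_{(1-\lambda)x+\lambda y=z}\tilde f(x)^{1-\lambda}\tilde g(y)^{\lambda},
\]
which is log-concave (writing $\tilde f=e^{-\varphi}$, $\tilde g=e^{-\psi}$ with $\varphi,\psi$ convex, one has $\tilde h=e^{-\chi}$ with $\chi$ the infimal convolution of $(1-\lambda)\varphi$ and $\lambda\psi$, hence convex), and then to invoke the log-concave stability Theorem~\ref{PLstab-logconv-dim1} on the triple $(\tilde f,\tilde g,\tilde h)$. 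By construction the pointwise Prékopa--Leindler condition holds, and the classical inequality delivers the lower bound $\int\tilde h\ge(\int\tilde f)^{1-\lambda}(\int\tilde g)^{\lambda}$.

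The heart of the proof, and the step I expect to be the main obstacle, is the matching upper bound
\[
\int\tilde h\le (1+\eps_0)\Bigl(\int\tilde f\Bigr)^{1-\lambda}\Bigl(\int\tilde g\Bigr)^{\lambda},\qquad \eps_0\lesssim\tau^{-c}\eta^{1/4}|\log\eps|^{c'},
\]
which converts the $L^1$ closeness hypotheses into near-optimality of $(\tilde f,\tilde g,\tilde h)$ in Prékopa--Leindler. My plan is to decompose $\tilde f=\min(f,\tilde f)+(\tilde f-f)_+$ and $\tilde g$ analogously, and to exploit the subadditivity $(a+b)^p\le a^p+b^p$ for $p\in(0,1)$. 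This yields the pointwise bound
\[
\tilde h(z)\le h^\flat(z)+S_1(z)+S_2(z)+S_3(z),
\]
where $h^\flat$ is the sup-convolution of $f$ and $g$ (so $h^\flat\le h$), and the $S_i$'s are sup-convolutions in which at least one factor is $(\tilde f-f)_+$ or $(\tilde g-g)_+$. After truncating $f,g$ via Lemma~\ref{thm:cutting-support}---which introduces only a $\tau^{-5/2}|\log\eps|^{4/\tau}$ correction and reduces matters to the regime where both functions are bounded below by a power of $\eta$ on an interval of length $\lesssim \|f\|_1/\|f\|_\infty$---each $\int S_i$ can be bounded via Hölder from above by a small multiple of $(\int f)^{1-\lambda}(\int g)^{\lambda}$, since $\|(\tilde f-f)_+\|_1$ and $\|(\tilde g-g)_+\|_1$ are both bounded by $\eta$ times the respective $L^1$ norms. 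Combined with $\int h^\flat\le\int h\le(1+\eps)(\int f)^{1-\lambda}(\int g)^{\lambda}$ and the elementary $(\int f)^{1-\lambda}(\int g)^{\lambda}\le(1+O(\eta))(\int\tilde f)^{1-\lambda}(\int\tilde g)^{\lambda}$, this gives the claim. An alternative, perhaps cleaner, route is to reason at the level of level sets: $\{\tilde h>t\}$ is a union of Minkowski sums of level sets of $\tilde f$ and $\tilde g$, and the one-dimensional Brunn--Minkowski stability of Theorem~\ref{BrunnMinkowskistabn1} can be invoked together with the closeness of the level sets of $f,\tilde f$ and $g,\tilde g$.

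With this near-optimality in hand, Theorem~\ref{PLstab-logconv-dim1} applied to $(\tilde f,\tilde g,\tilde h)$ produces $w\in\R$ with
\[
\int|a_0^{\lambda}\tilde f-\tilde h(\cdot-\lambda w)|+\int|a_0^{\lambda-1}\tilde g-\tilde h(\cdot+(1-\lambda)w)|\lesssim (\eps_0/\tau)^{1/3}|\log\eps_0|^{4/3}\int\tilde h,
\]
where $a_0=\int\tilde g/\int\tilde f$; the factor $(\eps_0)^{1/3}\lesssim\eta^{1/12}|\log\eps|^{c''}$ then matches the $\eta^{1/12}$ in the first conclusion once one applies the triangle inequality $\int|a^\lambda f-\tilde h(\cdot-\lambda w)|\le\int|a^\lambda f-a_0^\lambda\tilde f|+\int|a_0^\lambda\tilde f-\tilde h(\cdot-\lambda w)|$, using $|a-a_0|\lesssim\eta a$, $\|f-\tilde f\|_1<\eta\|f\|_1$, and $(\int f)^{1-\lambda}(\int g)^\lambda\le\int h$ to bound the first summand by $\lesssim\eta\int h$. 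For the second conclusion I would introduce the auxiliary $\hat h$, the sup-convolution of $\min(f,\tilde f)$ and $\min(g,\tilde g)$: by construction $\hat h\le\min(h,\tilde h)$ pointwise, while Prékopa--Leindler gives $\int\hat h\ge(1-\eta)(\int f)^{1-\lambda}(\int g)^\lambda\ge(1-O(\eta+\eps))\int h$. Combined with the upper bound $\int\tilde h\le(1+O(\eps_0+\eta))\int h$ from Step~2, the identity $\int|h-\tilde h|=\int h+\int\tilde h-2\int\min(h,\tilde h)$ yields $\int|h-\tilde h|\lesssim(\eps_0+\eta+\eps)\int h\lesssim\tau^{-2}\eta^{1/4}|\log\eps|\int h$, as required.
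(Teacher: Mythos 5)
The main obstacle you single out is indeed the crux, but the tool you propose for it does not work. The additive decomposition $\tilde f=\min(f,\tilde f)+(\tilde f-f)_+$ combined with subadditivity gives $\tilde h\le h^\flat+S_1+S_2+S_3$, yet the cross terms $S_i$ are \emph{not} small: a sup-convolution, unlike a pointwise product, admits no H\"older-type upper bound $\int S\lesssim\|u\|_1^{1-\lambda}\|v\|_1^{\lambda}$ (Pr\'ekopa--Leindler goes in the opposite direction), and smallness of $\|(\tilde f-f)_+\|_1$ does not force $\int S_i$ to be small because the sup-convolution spreads a thin tall piece over an interval of macroscopic length. Concretely, take $\lambda=\tau=\tfrac12$, $g=\tilde g=\tilde f=\chi_{[0,1]}$, $f=\chi_{[0,1]\setminus E}$ with $E$ an interval of measure $\tfrac\eta2$ centered at $\tfrac12$, $h=\chi_{[0,1]}$ and $\eps=\eta$: then \eqref{fghepscond}, \eqref{fgheps} and the $L^1$-closeness hypotheses all hold, while $S_2(z)=\sup_{z=\frac{x+y}2}\chi_E(x)^{1/2}\chi_{[0,1]}(y)^{1/2}=\chi_{\frac12(E+[0,1])}(z)$ has $\int S_2\ge\tfrac12$. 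So the bound your decomposition yields for $\int\tilde h$ is off by an amount of order $\tau$, far from the required $1+O(\tau^{-1}\eta^{1/4})$-type control, and the subsequent application of Theorem~\ref{PLstab-logconv-dim1} (and your $\hat h$/$\min$ identity for the second estimate, which is otherwise a nice and arguably cleaner device than the paper's rescaled auxiliary function) cannot get off the ground, since both hinge on exactly this missing step.

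The paper's proof of the near-optimality of the log-concave triple is genuinely different and geometric: it first truncates, replacing $\tilde f,\tilde g$ by $\tilde f_0=\tilde f\chi_{\{\tilde f>\alpha\}}$, $\tilde g_0=\tilde g\chi_{\{\tilde g>\alpha\}}$ with $\alpha\simeq\tau^{-1}\eta^{1/4}$, translates so that the level sets $\{\tilde f>r_0\}$, $\{\tilde g>s_0\}$ near the top are centered at the origin, and then proves a level-set inclusion $(1-\lambda)\{\tilde f_0>r\}+\lambda\{\tilde g_0>s\}\subset(1+2\eta^{\frac14})\{h>r^{1-\lambda}s^{\lambda}-\alpha\}$. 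The mechanism is: by Chebyshev, for most levels the symmetric differences $\H^1(\{f>r\}\Delta\{\tilde f>r\})$ are $\le\eta^{1/2}$; by Lemma~\ref{logconvdim1}~(i) the level sets of $\tilde f$ below $r_0$ have length $\gtrsim\tau^{-1}\eta^{1/4}\gg\eta^{1/2}$, so any point of the slightly shrunk Minkowski sum has an intersection $\H^1\bigl((1-\lambda)\{\tilde f>r\}\cap(x-\lambda\{\tilde g>s\})\bigr)\ge2\eta^{1/2}$ and hence already lies in $(1-\lambda)\{f>r\}+\lambda\{g>s\}\subset\{h>r^{1-\lambda}s^{\lambda}\}$; bad levels are handled by monotonicity in $r,s$, which is where the shift by $\alpha$ enters. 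Integrating in $t$ then gives $\int\tilde h\le(1+2\eta^{1/4})\int h$, i.e. the sought near-equality. Your ``alternative route'' via level sets points in this direction, but as stated it lacks all the ingredients that make it work (the truncation at level $\alpha$, the centering that makes the dilation harmless, the length lower bound from Lemma~\ref{logconvdim1}, and the treatment of the exceptional levels); Freiman's theorem is not what is needed here. To repair your proof you should replace the decomposition step by this containment-with-dilation argument (or an equivalent), after which your use of Theorem~\ref{PLstab-logconv-dim1}, the comparison of $a$ with $a_0$, and your $\hat h$ argument for $\int|h-\tilde h|$ would go through.
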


\begin{proof} 
We may assume that $\min\{\|f\|_{\infty},\|g\|_{\infty}\} = 1$ and $\int_\R f=\int_\R g=1$, so that
Lemma~\ref{thm:control-sup} yields 
\begin{equation}
\label{fgintest-close-stable}
1=\min\left\{\|f\|_{\infty},\|g\|_{\infty}\right\}\leq \max\left\{\|f\|_{\infty},\|g\|_{\infty}\right\}\leq 
1+4\tau^{-\frac32}\eps^{\frac12}<2.
\end{equation}
In particular, the approximating log-concave functions satisfy
\begin{equation}
\label{tildefg-close-stable}
\frac12<\int_\R\tilde{f},\int_\R\tilde{g}<2.
\end{equation}
The main idea of the proof is to show that,
for a suitable log-concave function $\tilde{h}$, the log-concave functions
 $\tilde{f}_0=\tilde{f}\chi_{\{\tilde{f}>\alpha\}}$ and  
$\tilde{g}_0=\tilde{g}\chi_{\{\tilde{g}>\alpha\}}$ satisfy almost equality in the Pr\'ekopa-Leindler inequality for
some value $\alpha\geq \eta$; 
therefore, the stability version Theorem~\ref{PLstab-logconv-dim1} of the Pr\'ekopa-Leindler inequality for log-concave functions implies that $\tilde{f}_0$ and $\tilde{g}_0$ can be expressed in terms of shifts and multiples of $\tilde{h}$. 

As a first step, we claim that
\begin{equation}
\label{logconvmaxclose}
|\|\tilde{f}\|_\infty-\|f\|_\infty|\leq 32\tau^{-\frac32} \eta^{\frac12}\mbox{ \ and \ }|\|\tilde{g}\|_\infty-\|g\|_\infty|
\leq 32\tau^{-\frac32} \eta^{\frac12}. 
\end{equation}
As the roles of $f$ and $g$ are symmetric, we only prove the statement about $f$.

First, we assume that $\|\tilde{f}\|_\infty>\|f\|_\infty$, hence $\|f\|_\infty=\|\tilde{f}\|_\infty-\alpha$ for some $\alpha>0$. 
In this case, Lemma~\ref{logconvdim1} (i) and \eqref{tildefg-close-stable}
imply that $\H^1(\{\tilde{f} > \|\tilde f\|_\infty-s\})\geq \frac{s}{2}\,\|\tilde{f}\|_\infty^{-2}$ for
$s\in(0,\alpha)$, thus the layer-cake representation gives
$$
\eta\geq\int_{\|f\|_\infty}^{\|\tilde{f}\|_\infty}\H^1(\{\tilde{f} > t\})\,dt>\frac{\alpha^2}{4\|\tilde{f}\|_\infty^2}.
$$
Therefore $\|f\|_\infty=\|\tilde{f}\|_\infty-\alpha\geq\|\tilde{f}\|_\infty(1-2\sqrt{\eta})$, and we deduce that
$$
\|\tilde{f}\|_\infty-\|f\|_\infty\leq \|f\|_\infty\left[(1-2\sqrt{\eta})^{-1}-1\right]<8\eta^{\frac12}.
$$

Next we assume that $\|\tilde{f}\|_\infty<\|f\|_\infty$. We consider the function
$$
f_1=f \cdot \chi_{\{f \le \|\tilde{f}\|_{\infty}\}}  + \|\tilde{f}\|_{\infty}\cdot \chi_{\{f > \|\tilde{f}\|_{\infty}\}},
$$
that satisfies 
$$
1\leq \left(\int_\R f_1\right)^{-1}\leq \left(\int_\R f-\int_\R |f-\tilde{f}|\right)^{-1}<1+2\eta.
$$
As $f_1\leq f$, we have $h((1-\lambda)x+\lambda\,y)\geq f_1(x)^{1-\lambda}g(y)^\lambda$ for any $x,y\in\R$ where
$$
\int_\R h\leq (1+\varepsilon)\left(\int_\R f\right)^{1-\lambda}\left(\int_\R g\right)^{\lambda}
\leq (1+4\eta)\left (\int_\R f_1\right)^{1-\lambda}\left(\int_\R g\right)^{\lambda}.
$$
We deduce from Lemma~\ref{thm:control-sup} 
applied to $f$ and $g$ on the one hand, and to $f_1$ and $g$ on the other hand that
$$
\frac{\|f\|_{\infty}}{\|\tilde{f}\|_{\infty}}=\frac{\|f\|_{\infty}}{\|g\|_{\infty}}\cdot \frac{\|g\|_{\infty}}{\|f_1\|_{\infty}}
\leq  \left(1+4\tau^{-\frac32}\varepsilon^{\frac12}\right)\cdot \left(1+4\tau^{-\frac32}\eta^{\frac12}\right)(1+4\eta)
<1+16\tau^{-\frac32}\eta^{\frac12}.
$$
Recalling \eqref{fgintest-close-stable}, this proves
the claim \eqref{logconvmaxclose}. In turn, combining 
\eqref{fgintest-close-stable} and \eqref{logconvmaxclose} leads to
\begin{equation}
\label{allmax:control-sup}
\frac12< \|f\|_\infty,\;\|g\|_\infty,\;\|\tilde{f}\|_\infty,\;\|\tilde{g}\|_\infty 
<2.
\end{equation}

For any $r>0$, we define
$$
A_r=\{f>r\},\mbox{ \ }\widetilde{A}_r=\{\tilde{f}>r\},\mbox{ \ }
B_r=\{g>r\},\mbox{ \ }\widetilde{B}_r=\{\tilde{g}>r\}.
$$
According to the layer-cake representation, 
 \begin{eqnarray*}
 \int_0^{\infty} \H^1(A_r \Delta \widetilde{A}_r) \, d r=\|f-\tilde{f}\|_1
&\leq & \eta\\
 \int_0^{\infty} \H^1(B_r \Delta \widetilde{B}_r) \, d r =\|g-\tilde{g}\|_1
&\leq & \eta.
 \end{eqnarray*}
In particular, the set 
$S\subset(0,\infty)$ 
defined by the property
\begin{equation}
\H^1(A_r \Delta \widetilde{A}_r)+\H^1(B_r \Delta \widetilde{B}_r)\leq
 \eta^{\frac12}\mbox{ \ for $r\in S$}
\end{equation}
satisfies that
\begin{equation}
\label{SdefH1:control-sup}
\H^1((0,\infty)\backslash S)< 4\eta^{\frac12}.
\end{equation}
In particular, if $r,s\in S$, then any $x\in\R$ satisfies
\begin{equation}
\label{SdefDelta:control-sup}
\left|\H^1\left((1-\lambda)A_r \cap (x-\lambda\,B_s)\right)-
\H^1\left((1-\lambda)\widetilde{A}_r \cap(x-\lambda\, \widetilde{B}_s)\right)\right|\leq
 \eta^{\frac12}.
\end{equation}
Consider
\begin{equation}
\label{r0s0def:control-sup}
r_0=\|\tilde{f}\|_\infty-32\tau^{-1}\eta^{\frac14}\quad
\mbox{ \ and \ }\quad s_0=\|\tilde{g}\|_\infty-32\tau^{-1}\eta^{\frac14}.
\end{equation}
Using \eqref{tildefg-close-stable} and \eqref{allmax:control-sup}, we deduce from Lemma~\ref{logconvdim1} (i)
that
\begin{equation}
\label{r0:control-sup}
\H^1(\widetilde{A}_{r_0}),\;
\H^1(\widetilde{B}_{s_0})\geq  4\tau^{-1}\eta^{\frac14}.
\end{equation}
Possibly after shifting $f$ and $g$, we may assume that zero is the common midpoint of the segments
$\widetilde{A}_{r_0}$ and $\widetilde{B}_{s_0}$. In particular, setting 
$$
{\rm cl}\,\widetilde{A}_r=[a_1(r),a_2(r)]\mbox{ \ and \ }{\rm cl}\,\widetilde{B}_s=[b_1(s),b_2(s)]\qquad \text{for $0<r<\|\tilde{f}\|_\infty$ and $0<s<\|\tilde{g}\|_\infty$},
$$
using that $a_1(r),b_1(r)$ are monotone decreasing and $a_2(r),b_2(r)$ are monotone increasing
provided $0<r<\min\{\|\tilde{f}\|_\infty,\|\tilde{g}\|_\infty\}$,
we have
$$
a_2(r),b_2(s)\geq 2\tau^{-1}\eta^{\frac14}\mbox{ \ and \ }a_1(r),b_1(s)\leq -2\tau^{-1}\eta^{\frac14}
\mbox{ \ for $r\in(0,r_0]$, $s \in (0,s_0]$}.
$$
Therefore, if $r\in S\cap (0,r_0)$, $s\in S\cap (0,s_0)$ and
$$
 x\in (1+2\eta^{\frac14})^{-1}\left((1-\lambda)\widetilde{A}_r + (\lambda\, \widetilde{B}_s)\right)
\subset (1-\eta^{\frac14})\left((1-\lambda)\widetilde{A}_r + (\lambda\, \widetilde{B}_s)\right),
$$
then 
$$
\H^1\left((1-\lambda)\widetilde{A}_r \cap(x-\lambda\, \widetilde{B}_s)\right)\geq 2\eta^{\frac12},
$$
and hence \eqref{SdefDelta:control-sup} yields 
$$
x\in(1-\lambda)A_r + (\lambda\,B_s).
$$
In other words, if $r\in S\cap (0,r_0)$ and $s\in S\cap (0,s_0)$, then
\begin{equation}
\label{rsinSrs0:control-sup}
(1-\lambda)\widetilde{A}_r + \lambda\, \widetilde{B}_s\subset
 (1+2\eta^{\frac14})\left((1-\lambda)A_r +\lambda\,B_s\right)\subset
(1+2\eta^{\frac14})\left\{h>r^{1-\lambda}s^\lambda\right\}.
\end{equation}
On the other hand, for any $r\in (\eta^{\frac14},\|\tilde{f}\|_\infty)$ and $s\in (\eta^{\frac14},\|\tilde{g}\|_\infty)$,
\eqref{SdefH1:control-sup} and the definition
of $r_0,s_0$ yield the existence of some
$\tilde{r}\in S\cap (0,\min\{r,r_0\})$ and $\tilde{s}\in S\cap (0,\min\{s,s_0\})$ with
$$
\tilde{r}\geq r-\theta(r)\mbox{ \ and \ }\tilde{s}\geq s-\theta(s)
$$
where $\theta(t)=2^6\tau^{-1}\eta^{\frac14}$ if $t\geq \frac12$, and $\theta(t)=4\eta^{\frac12}$ if $t\in(0,\frac12)$.
 In particular,
$$
\tilde{r}\geq (1-2^7\tau^{-1}\eta^{\frac14})r\mbox{ \ and \ }\tilde{s}\geq (1-2^7\tau^{-1}\eta^{\frac14})s\qquad \text{for }r,s\geq \eta^{\frac14},
$$
thus setting $t=r^{1-\lambda}s^\lambda$, we have
$$
\tilde{r}^{1-\lambda}\tilde{s}^\lambda\geq (1-2^7\tau^{-1}\eta^{\frac14})t\geq t-2^8\tau^{-1}\eta^{\frac14}.
$$
Therefore, if we define
\begin{equation}
\label{alphadef:control-sup}
\alpha=2^8\tau^{-1}\eta^{\frac14},
\end{equation}
then, for any $r\in(\alpha,\|\tilde{f}\|_\infty)$ and $s\in(\alpha,\|\tilde{g}\|_\infty)$, we deduce from
\eqref{rsinSrs0:control-sup} that $t=r^{1-\lambda}s^\lambda$ satisfies
\begin{eqnarray}
\nonumber
(1-\lambda)\widetilde{A}_r + \lambda\, \widetilde{B}_s&\subset&
(1-\lambda)\widetilde{A}_{\tilde{r}} + \lambda\, \widetilde{B}_{\tilde{s}}\subset
(1+2\eta^{\frac14})\left\{h>\tilde{r}^{1-\lambda}\tilde{s}^\lambda\right\}\\
\label{levelset-shift0:control-sup}
&\subset &
(1+2\eta^{\frac14})\{h>t-\alpha\}.
\end{eqnarray}
Next we replace $\tilde{f}$ by $\tilde{f}_0=\tilde{f}\chi_{\{\tilde{f}>\alpha\}}$ and $\tilde{g}$ by 
$\tilde{g}_0=\tilde{g}\chi_{\{\tilde{g}>\alpha\}}$. Then
 Lemma~\ref{logconvdim1}, \eqref{tildefg-close-stable}, and $\frac12<\|\tilde{f}\|_\infty,\|\tilde{g}\|_\infty<2$ (cp. \eqref{logconvmaxclose}), yield 
\begin{eqnarray}
\label{tildeff0int:control-sup}
\|\tilde{f}-\tilde{f}_0\|_1+\|\tilde{g}-\tilde{g}_0\|_1&\leq& 32\alpha\\
\label{tildef0g0:control-sup}
\H^1\big({\rm supp}\,\tilde{f}_0\big)+\H^1\big({\rm supp}\,\tilde{g}_0\big) &\leq& 32|\log \alpha|.
\end{eqnarray}
In particular, we deduce from \eqref{tildeff0int:control-sup} that
\begin{equation}
\label{0tilde0int:control-sup}
\|f-\tilde{f}_0\|_1+\|g-\tilde{g}_0\|_1\leq 2^6\alpha,
\end{equation}
hence
\begin{equation}
\label{tildefgintlow:control-sup}
\int_\R\tilde{f}_0,\;\int_\R\tilde{g}_0\geq 1-2^6\cdot\alpha,
\end{equation}
Consider now the log-concave function $\tilde{h}$ defined as
$$
\tilde{h}(z)=\sup_{z=(1-\lambda)x+\lambda\,y}\tilde{f}_0(x)^{1-\lambda}\tilde{g}_0(y)^{\lambda},
$$
which satisfies $\tilde{h}(z)\geq \alpha$ for any $z\in {\rm int}\,{\rm supp}\,\tilde{h}$ and
\begin{equation}
\label{tildehsup:control-sup}
\H^1\left({\rm supp}\,\tilde{h}\right) \leq 32|\log \alpha|
\end{equation}
(see \eqref{tildef0g0:control-sup}).
According to \eqref{tildefgintlow:control-sup} and the Pr\'ekopa-Leindler inequality, we have
\begin{equation}
\label{tildehintlow:control-sup}
\int_\R\tilde{h}\geq 1-2^6\alpha.
\end{equation}
It follows from the the definition of $\tilde{h}$ and \eqref{levelset-shift0:control-sup} that, for any $t>\alpha$, we have
\begin{equation}
\label{levelset-shift:control-sup}
\{\tilde{h}>t\}=\bigcup_{t=r^{1-\lambda}s^{\lambda}}
\left((1-\lambda)\widetilde{A}_r + \lambda\, \widetilde{B}_s\right)
\subset (1+2\eta^{\frac14})\{h>t-\alpha\}.
\end{equation}
To relate $\tilde{h}$ to $f$ and $g$, we deduce from \eqref{tildefgintlow:control-sup}
and \eqref{levelset-shift:control-sup} that
\begin{eqnarray}
\nonumber
\int_\R\tilde{h}&=&\int_\alpha^\infty\H^1\left(\{\tilde{h}>t\}\right)\,dt\leq (1+2\eta^{\frac14})\int_\alpha^\infty
\H^1\left(\{h>t-\alpha\}\right)\,dt
=(1+2\eta^{\frac14})\int_\R h\\
\label{tildehintupp:control-sup}
&<&1+4\eta^{\frac14}
\leq (1+2^9\alpha)\left(\int_\R\tilde{f}_0\right)^{1-\lambda}\left(\int_\R\tilde{g}_0\right)^{\lambda}.
\end{eqnarray}
Recalling that $\alpha=2^8\tau^{-1}\eta^{\frac14}$, thanks to Theorem~\ref{PLstab-logconv-dim1} 
 there exists $w \in \R$ such that
\[
 \int_{\R^n} |a_0^{\lambda}\tilde{f}_0-   \tilde{h}(\cdot +\lambda\,w)| + 
\int_{\R^n} |a_0^{\lambda-1}\tilde{g}_0- \tilde{h}(\cdot + (\lambda-1)w)| \lesssim \tau^{-\frac{2}3}
\eta^{\frac1{12}}|\log \alpha|^{\frac43}\int_{\R^n}\tilde{h}
\]
where  $a_0=\int_{\R^n} \tilde{g}_0/\int_{\R^n} \tilde{f}_0$.
 Also, by \eqref{tildefgintlow:control-sup} and the conditions $\int_\R\tilde{f},\,\int_\R\tilde{g}\leq 1+\eta$, it holds
 $$
1-2^{14}\tau^{-1}\eta^{\frac14}\leq \int_\R\tilde{f}_0,\;\int_\R\tilde{g}_0\leq 1+\eta,
$$
In particular $|a_0-1|\lesssim \tau^{-1}\eta^{\frac14}$, therefore
\[
 \int_{\R^n} |\tilde{f}_0-   \tilde{h}(\cdot +\lambda\,w)| + 
\int_{\R^n} |\tilde{g}_0- \tilde{h}(\cdot + (\lambda-1)w)| \lesssim \tau^{-\frac{2}3}
\eta^{\frac1{12}}|\log \alpha|^{\frac43}\int_{\R^n}\tilde{h}.
\]
Recalling \eqref{0tilde0int:control-sup}, this proves the first bound in the statement of
Proposition~\ref{thm:close-stable}.

To relate $\tilde{h}$ to $h$, consider the
auxiliary function 
$$
\tilde{h}_0(x)=
\left\{
\begin{array}{ll}
\tilde{h}((1+2\eta^{\frac14})x)-\alpha &\mbox{ if $x\in{\rm int}\,{\rm supp}\,\tilde{h}$}, \\
0& \mbox{ otherwise,}
\end{array}
\right.
$$
so that, if $t>\alpha$, then
\begin{equation}
\label{tildeh0level:control-sup}
\{\tilde{h}>t\}=(1+2\eta^{\frac14})\{\tilde{h}_0>t-\alpha\}.
\end{equation}
Comparing \eqref{tildeh0level:control-sup} and \eqref{levelset-shift:control-sup}, it follows that
$\tilde{h}_0\leq h$. In addition, \eqref{tildehintlow:control-sup} implies that
$$
1-2^7\alpha<(1+2\eta^{\frac14})^{-1}\int_\R \tilde{h}=\int_\R \tilde{h}_0\leq \int_\R h<1+\varepsilon,
$$
therefore 
\begin{equation}
\label{htildeh0:control-sup}
\|h-\tilde{h}_0\|_1<2^8\alpha.
\end{equation}
Next we claim that
\begin{equation}
\label{tildehtildeh0error:control-sup}
\tilde{h}((1+2\eta^{\frac14})x)< \tilde{h}(x)+2^7\tau^{-2} \eta^{\frac14}\qquad \text{for any $x\in {\rm supp}\,\tilde{h}$.}
\end{equation}
We observe that $t_0=r_0^{1-\lambda}s_0^\lambda\geq 1-2^6\tau^{-\frac32} \eta^{\frac14}$ according to
\eqref{fgintest-close-stable},
\eqref{logconvmaxclose}, and \eqref{r0s0def:control-sup}. Since $\tilde{f}$ and $\tilde{g}$ were translated to ensure
 $\tilde{f}_0(0)\geq r_0$ and 
$\tilde{g}_0(0)\geq s_0$, we deduce that $\tilde{h}(0)\geq t_0$. Using that $\tilde{h}$ is log-concave, we deduce that
that if $\tilde{h}(x)\leq t_0$, then $\tilde{h}((1+2\eta^{\frac14})x)\leq \tilde{h}(x)$ and the follows. 
On the other hand, if $\tilde{h}(x)> t_0$ then \eqref{tildehtildeh0error:control-sup} follows from $\|\tilde{h}\|_\infty\leq 1+32\tau^{-\frac32} \eta^{\frac12}$ (see \eqref{fgintest-close-stable} and
\eqref{logconvmaxclose})
and the bound $t_0\geq 1-2^6\tau^{-\frac32} \eta^{\frac14}$.

Thanks to \eqref{tildehtildeh0error:control-sup}, since
$\alpha\leq 2^7\tau^{-2} \eta^{\frac14}$ we get
\begin{eqnarray*}
\|\tilde{h}-\tilde{h}_0\|_1&=&\int_{{\rm supp}\,\tilde{h}}\left|\tilde{h}(x)-\tilde{h}((1+2\eta^{\frac14})x)+\alpha\right|\,dx\\
&=&\int_{{\rm supp}\,\tilde{h}}\left|\tilde{h}(x)+2^7\tau^{-2} \eta^{\frac14}-\tilde{h}((1+2\eta^{\frac14})x)+
(\alpha-2^7\tau^{-2} \eta^{\frac14})\right|\,dx\\
&\leq & \int_{{\rm supp}\,\tilde{h}}\tilde{h}(x)+2^7\tau^{-2} \eta^{\frac14}-\tilde{h}((1+2\eta^{\frac14})x)\,dx
+\int_{{\rm supp}\,\tilde{h}}2^7\tau^{-2} \eta^{\frac14}\,dx\\
&=&\left(1-\frac1{1+2\eta^{\frac14}}\right)\int_{{\rm supp}\,\tilde{h}}\tilde{h}(x)\,dx+
2\cdot \H^1({\rm supp}\,\tilde{h})\cdot 2^7\tau^{-2} \eta^{\frac14}.
\end{eqnarray*}
Since $\int_\R\tilde{h}<2$  and
 $\H^1({\rm supp}\,\tilde{h})\leq 32|\log \alpha|$ (see \eqref{tildehintupp:control-sup} and \eqref{tildehsup:control-sup}), we conclude that
$\|\tilde{h}-\tilde{h}_0\|_1<2^{14}\tau^{-2} \eta^{\frac14}|\log \alpha|$.
Combining this estimate with \eqref{htildeh0:control-sup} implies that
$\|h-\tilde{h}\|_1<2^{15}\tau^{-2} \eta^{\frac14}|\log \alpha|$. As $\alpha=2^8\tau^{-1}\eta^{\frac14}$, we have $|\log \alpha| \lesssim \max\{ |\log \tau|, |\log \eps|\} \lesssim |\log \eps|.$ Plugging this into the statements above, we obtain the original claim, which finishes the proof.  
\end{proof}

\section{The case of symmetric-rearranged functions}\label{sec:rearranged-version} For this part and for the remainder of the paper, we assume all the reductions and results from \S 2 to hold. 

As noticed in the beginning of the previous section, the \emph{symmetric decreasing rearrangements} of functions $f,g,h$ satisfying \eqref{eq:condition} and \eqref{eq:almost-eq}, denoted by $f^*,g^*,h^*$, also satisfy \eqref{eq:condition} and \eqref{eq:almost-eq} with the same constant, as rearrangements preserve $L^p-$norms. By changing these functions on a zero-measure
set, we may suppose that their level sets are all open. The main result of this section lays out the foundation for the analysis in the following ones, and can be summarized as follows:

\begin{theorem}\label{thm:rearranged-1-dim} There is an absolute constant $c > 0$ such that the following holds. Suppose $f,g,h : \R \to \R_{\ge 0}$ satisfy \eqref{eq:condition} and \eqref{eq:almost-eq} for $0 < \eps < c e^{-\frac{1000 |\log \tau|^4}{\tau^4}}.$ Then there exist even log-concave functions $\tilde{f}, \tilde{g}$ such that 
\[
\| f^* - \tilde{f}\|_1 + \|g^* - \tilde{g}\|_1 \lesssim \tau^{-\omega} \eps^{\frac{\tau}{2^{21} |\log \tau|}},
\]
where $\omega$ is an absolute constant given by $\omega = 6 + \frac{3\omega_0}{2},$ with $\omega_0$ as in Lemma~\ref{thm:near-concave-lemma}.
\end{theorem}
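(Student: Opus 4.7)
Because $f^*, g^*, h^*$ are even and nonincreasing on $[0,\infty)$, their super-level sets are symmetric open intervals: write $\{f^* > t\} = (-a_f(t), a_f(t))$, and analogously define $a_g(t), a_h(t)$. The rearranged Pr\'ekopa--Leindler hypothesis \eqref{eq:condition} combined with the (trivial) one-dimensional Brunn--Minkowski inequality for intervals yields the pointwise bound $a_h(r^{1-\lambda}s^\lambda) \geq (1-\lambda)a_f(r) + \lambda a_g(s)$. Passing to logarithmic variables $T = \log t$ and setting $\mathcal{A}_f(T) := a_f(e^T)$ (and analogously $\mathcal{A}_g, \mathcal{A}_h$), this becomes
\[
\mathcal{A}_h\bigl((1-\lambda)R + \lambda S\bigr) \;\geq\; (1-\lambda)\mathcal{A}_f(R) + \lambda\,\mathcal{A}_g(S) \qquad \forall\,R,S.
\]
The crucial observation is that the even symmetric-decreasing function $f^*$ is log-concave if and only if $\mathcal{A}_f$ is concave, and similarly for $g^*$. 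The problem is thereby reduced to showing that $\mathcal{A}_f, \mathcal{A}_g$ are quantitatively $L^1(e^T\,dT)$-close to concave functions on the interval where $f^*, g^*$ carry most of their mass.

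After the normalizations $\|f^*\|_\infty = \|g^*\|_\infty = 1$ (legitimate via Lemma~\ref{thm:control-sup}) and $\int f^* = \int g^* = 1$, the layer-cake identity gives $\int f^* = 2\int_{-\infty}^{0} \mathcal{A}_f(T) e^T\,dT$ and the same for $g^*, h^*$. Lemma~\ref{thm:cutting-support} provides a central interval $I \subset (-\infty, 0)$ outside of which $f^*, g^*$ carry only polynomially small mass. On $I \times I$, combining the pointwise bound with the quantitative AM--GM estimate \eqref{Holderstab} and inserting into \eqref{eq:almost-eq} yields an integrated defect estimate of the form
\[
\int\!\!\int_{I \times I}\!\!\Bigl[\mathcal{A}_h\bigl((1-\lambda)R + \lambda S\bigr) - (1-\lambda)\mathcal{A}_f(R) - \lambda\,\mathcal{A}_g(S)\Bigr]\,e^{(1-\lambda)R + \lambda S}\,dR\,dS \;\lesssim\; \varepsilon.
\]
A Chebyshev-type argument converts this into a four-point near-midpoint-concavity inequality for the pair $(\mathcal{A}_f, \mathcal{A}_g)$ on a large subset of $I \times I$, in the spirit of \cite[Lemma~3.6 and Remark~4.1]{FigalliJerison}.

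The main technical step is then to invoke Lemma~\ref{thm:near-concave-lemma}, which upgrades this four-point near-concavity condition into the existence of concave functions $m_f, m_g \colon I \to [0,\infty)$ satisfying
\[
\|\mathcal{A}_f - m_f\|_{L^1(I, e^T dT)} + \|\mathcal{A}_g - m_g\|_{L^1(I, e^T dT)} \;\lesssim\; \tau^{-\omega_0}\,\varepsilon^{\tau/(2^{21}|\log\tau|)}.
\]
This is the main obstacle: extracting a quantitative $L^1$-approximant by a genuinely concave function from only a weighted integrated defect together with a pointwise inequality, with polynomial $\tau$-dependence, appears to require an iterative Freiman-type covering procedure on nested logarithmic scales. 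The loss of a $|\log\tau|^{-1}$ factor and the large constant $2^{21}$ in the final exponent both reflect the combinatorial cost incurred at each iteration.

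Granting these concave majorants, define the even log-concave functions $\tilde f, \tilde g$ by declaring their level sets to be $\{\tilde f > e^T\} := (-m_f(T), m_f(T))$ and $\{\tilde g > e^T\} := (-m_g(T), m_g(T))$ on $I$, extended by zero outside; concavity of $m_f, m_g$ makes $\tilde f, \tilde g$ even log-concave by construction. By the layer-cake formula, $\|f^* - \tilde f\|_1 = 2\int_I |\mathcal{A}_f - m_f|\,e^T\,dT$ plus a tail contribution controlled by Lemma~\ref{thm:cutting-support}, and similarly for $g^*$. Collecting the normalization, truncation and approximation losses produces the bound in Theorem~\ref{thm:rearranged-1-dim} with exponent $\omega = 6 + 3\omega_0/2$, where the additive $6$ absorbs the constants from the tail and AM--GM steps, and the $3\omega_0/2$ accounts for the amplification that occurs when passing from $L^1(e^T\,dT)$ back to $L^1$.
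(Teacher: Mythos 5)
Your overall skeleton---passing to the radius functions of the rearranged level sets in logarithmic variables, deriving an approximate midpoint-concavity condition, invoking Lemma~\ref{thm:near-concave-lemma}, and rebuilding even log-concave functions from the concave approximants via the layer-cake formula---is the same as the paper's. However, the step where you convert \eqref{eq:almost-eq} into a quantitative input for that lemma is wrong as stated. Your claimed bound
\[
\iint_{I\times I}\Bigl[\mathcal{A}_h\bigl((1-\lambda)R+\lambda S\bigr)-(1-\lambda)\mathcal{A}_f(R)-\lambda\,\mathcal{A}_g(S)\Bigr]\,e^{(1-\lambda)R+\lambda S}\,dR\,dS\;\lesssim\;\varepsilon
\]
fails already in the equality case: take $f=g=h$ a standard Gaussian, so that $\varepsilon$ may be taken arbitrarily small, while $\mathcal{A}_f=\mathcal{A}_g=\mathcal{A}_h=\mathcal{A}$ is strictly concave and the (nonnegative) integrand is a fixed positive function off the diagonal $R=S$; the double integral is then of order one, not $O(\varepsilon)$. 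What \eqref{eq:almost-eq} actually gives is only the same-level estimate $\int\bigl[\H^1(\{h^*>e^T\})-(1-\lambda)\H^1(\{f^*>e^T\})-\lambda\H^1(\{g^*>e^T\})\bigr]e^T\,dT\le\varepsilon$. The correct route, which is the one the paper takes, is to apply Chebyshev to this single integral to produce a large set of levels on which the same-level defect is at most $\tau^{-\frac32}\varepsilon^{\frac14}$, and then to combine this with the pointwise inequality $\mathcal{A}_h(T)\ge(1-\lambda)\mathcal{A}_f(R)+\lambda\mathcal{A}_g(S)$ to obtain the approximate three-point inequality only when the combined level $T=(1-\lambda)R+\lambda S$ lies in the good set; the four-point inequality then follows from \cite[Remark~4.1]{FigalliJerison}. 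Your double-integral formulation cannot be rescued by a Chebyshev argument, because the quantity it starts from is simply not small.

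There is a second gap at the end: after obtaining the concave approximants you set $\{\tilde f>e^T\}=(-m_f(T),m_f(T))$ and assert that concavity of $m_f$ alone makes $\tilde f$ even and log-concave ``by construction.'' But these prescribed level sets define a function only if they are nested, i.e. only if $m_f$ is nonincreasing in $T$, and the concave function produced by Lemma~\ref{thm:near-concave-lemma} need not be monotone. The paper devotes a substantial part of the proof to exactly this point: using concavity together with the $L^1$-closeness to the genuinely nonincreasing radius function, it shows the approximant is nearly constant to the left of its maximum, flattens it there, and tracks the resulting extra error; this monotonization, the $e^T$ weight, and the truncation to levels $|T|\le\theta|\log\varepsilon|$ via Lemma~\ref{thm:cutting-support} are precisely what degrade the exponent from the $\alpha_\tau$ of the lemma to the final $\tau/(2^{21}|\log\tau|)$. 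Without the monotonization step, and with the defect estimate corrected as above, your construction of $\tilde f,\tilde g$ and the final layer-cake bound are not yet justified.
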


Here and henceforth, given a family of sets $\{S_\alpha\}$, we shall use the notation $\bigcup^*_{\alpha} S_{\alpha}$ to denote the union $\bigcup_{\alpha \colon S_{\alpha} \neq \emptyset} S_{\alpha}.$

\begin{proof}[Proof of Theorem~\ref{thm:rearranged-1-dim}] First, we may suppose without loss of generality that $\|f\|_{1} = \|g\|_{1} = 1,$ and that $\min\{\|f\|_{\infty}, \|g\|_{\infty}\} = \|f\|_{\infty} = 1.$ These assumptions, together with Lemma \ref{thm:control-sup}, imply that 
	$$
	0 \le \|g\|_{\infty} -1 \le 4 \tau^{-\frac32} \eps^{\frac12}.	
	$$ 
Consider, thus, the functions $a, b,c:\R \to \R_+$ defined so to satisfy,
for any $R\in \R$, 
\begin{equation*}
\begin{split} 
\{f^* > e^R\} &= (-a(R),a(R))=:\AC_R,\cr
\{g^* > e^R\} &= (-b(R),b(R)) =:\BC_R,\cr
\{h^* > e^R\} &= (-c(R),c(R)) =: \CC_R. \cr
\end{split}
\end{equation*}
By \eqref{eq:condition} applied to $h^*,$ we have
\begin{equation}\label{eq:pl-cond-sets}
\CC_T \supseteq \bigcup^{*}_{(1-\lambda )R+ \lambda S = T } \left\{(1- \lambda) \AC_R + \lambda \BC_S \right\}.
\end{equation}
Thus, as $\int f^* = \int g^* = 1,$ by a change of variables, we have 
\[
\eps \ge \int_{-\infty}^{\infty} \left(\H^1(\CC_{T}) - ((1-\lambda )\H^1(\AC_T) +  \lambda \H^1(\BC_T)) \right) e^T \, d T. 
\]
Notice that the map $T \mapsto \H^1(\CC_{T}) - (1-\lambda) \H^1(\AC_T)-\lambda \H^1(\BC_T)$ is, by \eqref{eq:pl-cond-sets} and the Brunn--Minkowski inequality, nonnegative for all $T>0$ for which $\AC_T,\BC_T \neq \emptyset.$ Reversing 
$$\AC_T = A_{e^T},\, \BC_T= B_{e^T},\, \CC_T = C_{e^T},$$
changing variables $e^T = t,$ we claim that we may find a set $F \subset \R_{+}$ such that $\H^1({\R_+}\setminus F) \lesssim \eps^{\frac14}$ and
\begin{equation}\label{eq:bound-level-sets}
 \left|\H^1(C_t) - (1-\lambda) \H^1(A_t)-\lambda\H^1(B_t)\right| \le \tau^{-\frac32} \eps^{\frac14}\qquad \forall\, t \in F.
\end{equation}
Indeed, let $S_1 = \{ t \ge 0; \H^1(C_t) \ge (1-\lambda) \H^1(A_t) + \lambda \H^1(B_t)\}.$ By the reductions made, we know that $S_1 \supseteq [0,1].$ If we denote by $S_2 = \R\setminus S_1,$ then 
\[
|\H^1(C_t) - (1-\lambda) \H^1(A_t) - \lambda \H^1(B_t) | \le \max\{\H^1(A_t),\H^1(B_t)\} \lesssim 1 \, \forall \, t \in S_1. 
\]
Thus, 
\[
\int_{S_1} |\H^1(C_t) - (1-\lambda) \H^1(A_t) - \lambda \H^1(B_t) | \, dt \lesssim \int_1^{1+4\tau^{-\frac32} \eps^{\frac12}} 1 \, dt \lesssim \tau^{-\frac32} \eps^{\frac12}. 
\]
By the fact that the integral $\int_{\R_{+}} (\H^1(C_t) - (1-\lambda) \H^1(A_t) - \lambda \H^1(B_t) ) \, dt \le \eps,$ we obtain that 
\[
\int_{\R_+} |\H^1(C_t) - (1-\lambda) \H^1(A_t) - \lambda \H^1(B_t) | \, dt \lesssim \tau^{-\frac32} \eps^{\frac12}.  
\]
By using Chebyshev's inequality, we obtain that the set of $t \ge 0$ where the integrand is larger than $ c \tau^{-\frac32} \eps^{\frac14}$  has measure at most $ c \cdot \eps^{\frac14},$ which finishes the proof of the claim. In particular, if $\AC_R, \BC_S \neq \emptyset, (1-\lambda) R+\lambda S=T,$ and $e^T = t \in F,$ we have 
\begin{equation}\label{eq:three-point-1}
(1-\lambda)a(R) + \lambda b(S) \le \left((1-\lambda)a + \lambda b\right)(T) + \tau^{-\frac32}\eps^{\frac14}. 
\end{equation}
Fix thus $M = \theta \log(1/\eps),$ with $\theta > 0$ small to be chosen later. Denote by $F_{M} = F \cap [e^{-M},e^M].$ With this definition, we have that the set 
$$\log(F_M) = \{T \in \R \colon e^T \in F_M\}$$ 
has large measure within $[-M,M].$ Indeed, recalling that $\H^1({\R_+}\setminus F) \le \eps^{\frac14}$, 
\begin{equation*}
\int_{\R} \chi_{[-M,M] \setminus\log( F_M)}(T) \, d T  \le e^M \int_{\R} \chi_{[-M,M] \setminus \log(F_M)}(T) \, e^T d T = \eps^{-\theta} \H^1([e^{-M},e^M]\setminus F) \le  \eps^{\frac14-\theta}. 
\end{equation*}
Thus, if $\theta < 1/8,$ $\H^1([-M,M]\setminus \log(F_M)) \le \eps^{\frac18}.$ 

Therefore, if $T_1,T_2 \in \log(F_M),$ and additionally 
$$T_{1,2} = \frac{1}{2-\lambda} T_1  + \frac{1-\lambda}{2-\lambda} T_2 \in \log(F_M), \qquad T_{2,1} = \frac{1}{2-\lambda} T_2 + \frac{1-\lambda}{2-\lambda} T_1 \in \log(F_M),$$
then the reduction in \cite[Remark~4.1]{FigalliJerison} shows that the following \emph{four-point inequalities} hold:
\begin{equation}\begin{split}\label{eq:four-point-1}
a(T_1) + a(T_2) &\le a(T_{1,2}) + a(T_{2,1}) + \frac{2}{\lambda} \tau^{-\frac32} \eps^{\frac14}, \cr 
b(T_1) + b(T_2) &\le b(T_{1,2}) + b(T_{2,1}) + \frac{2}{\lambda} \tau^{-\frac32} \eps^{\frac14}. \cr 
\end{split}\end{equation}
Inspired by this, we recall the statement of Lemma 3.6 in \cite{FigalliJerison} in the one-dimensional case: 

\begin{lemma}[Lemma 3.6 in \cite{FigalliJerison}]\label{thm:near-concave-lemma} Let $G \subset \R$ be a measurable subset and $\psi:G \to \R$ be a function, such that the following properties hold: 
\begin{enumerate}
 \item The four-point inequality 
 \begin{equation}\label{eq:four-point-inequality-2}
 \psi(T_1) + \psi(T_2) \le \psi(T_{1,2}) + \psi(T_{2,1}) + \sigma 
 \end{equation}
 holds, whenever $T_1,T_2,T_{1,2},T_{2,1} \in G$; 
 \item The convex hull $\text{co}(G) = \Omega$ satisfies $\H^1(\Omega\setminus G) \le \zeta;$ 
 \item There is $r \in (1/2,2)$ with $[-r,r] = \Omega;$
 \item The inequalities $-\kappa \le \psi(T) \le \kappa$ hold for all $T \in F,$ for some $\kappa \ge 1;$ 
 \item There is $H \subset \R$ such that 
 \begin{equation}\label{eq:small-measure-hull}
 \int_{H} \H^1(\text{co}(\{\psi > s\}) \setminus \{\psi > s\}) \, d s + \int_{\R \setminus H} \H^1(\{\psi > s\}) \le \zeta.
 \end{equation}
\end{enumerate}
Then there exist a concave function $\Psi : \Omega \to [-2\kappa,2\kappa]$, and an absolute constant $c>0$, such that
\begin{equation}\label{eq:near-concave}
\int_{G} |\Psi(T) - \psi(T)| \, d T \le c \kappa \tau^{-\omega_0} (\sigma + \zeta)^{\alpha_{\tau}},
\end{equation}
where we let $\alpha_{\tau} = \frac{\tau}{16 |\log \tau|},$ and $\omega_0 > 0$ is an absolute constant. 
\end{lemma}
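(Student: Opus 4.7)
Following the strategy of \cite[Lemma~3.6]{FigalliJerison} adapted to the one-dimensional setting, I would organize the proof around three steps.

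\emph{Step 1 (Bubble-shaped replacement).} Use hypothesis (5) to replace $\psi$ by the ``bubble'' function $\hat\psi(T):=\sup\{s\in H:T\in\mathrm{co}(\{\psi>s\})\}$, whose super-level sets are the convex hulls of those of $\psi$. The layer-cake formula combined with (5) yields $\|\hat\psi-\psi\|_{L^1(G)}\lesssim \kappa\,\zeta$, so it is enough to approximate $\hat\psi$ by a concave function. By construction $\hat\psi$ has interval super-level sets and is therefore unimodal, which is a first (weak) step toward concavity.

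\emph{Step 2 (Iteration of the four-point inequality).} The algebraic identity $T_{1,2}-T_1=T_2-T_{2,1}=\tfrac{1-\lambda}{2-\lambda}(T_2-T_1)$ lets us rewrite \eqref{eq:four-point-inequality-2} as a quantitative decreasing-slopes estimate
\[
\frac{\psi(T_{1,2})-\psi(T_1)}{T_{1,2}-T_1}\ \geq\ \frac{\psi(T_2)-\psi(T_{2,1})}{T_2-T_{2,1}}\ -\ \frac{(2-\lambda)\,\sigma}{(1-\lambda)\,(T_2-T_1)},
\]
so that a single application yields concavity only at the fixed weight $(1-\lambda)/(2-\lambda)$. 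To reach general weights $\mu\in(0,1)$, I would chain the inequality at geometrically decreasing scales: after $k\simeq |\log\tau|/\tau$ iterations, the accessible weights become dense in $(0,1)$, while the additive error grows by $k\sigma$ and, by a Fubini/pigeonhole argument using (2), the auxiliary points can be kept inside $G$ outside a bad set of total measure $O(k\zeta)$. Optimizing the iteration depth yields an averaged concavity inequality
\[
\hat\psi((1-\mu)T+\mu T')\ \geq\ (1-\mu)\hat\psi(T)+\mu\hat\psi(T')\ -\ C\kappa\,\tau^{-\omega_0}(\sigma+\zeta)^{\alpha_\tau}
\]
valid for most $(T,T',\mu)$, with $\alpha_\tau=\tau/(16|\log\tau|)$ emerging from balancing iteration depth against error growth, and $\omega_0>0$ an absolute constant encoding the $\tau$-loss per step.

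\emph{Step 3 (Concave envelope and $L^1$ bound).} Let $\Psi:\Omega\to[-2\kappa,2\kappa]$ be the least concave majorant of a suitable truncation of $\hat\psi$. Then $\Psi\geq\hat\psi$ pointwise, and the averaged concavity from Step~2 controls each ``gap'' on which $\Psi>\hat\psi$: a maximal chord of $\Psi$ sitting strictly above $\hat\psi$ witnesses a failure of approximate concavity at one of its endpoints and is thus bounded by the error above. Summing over all such gaps and combining with Step~1 gives
\[
\int_G|\Psi-\psi|\,dT\ \lesssim\ \kappa\,\tau^{-\omega_0}(\sigma+\zeta)^{\alpha_\tau},
\]
as desired.

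The main obstacle is Step~2. The iteration must be designed so that (i) the multiplicative errors per step do not explode, even though each step involves a factor of $1/(1-\lambda)\leq 1/\tau$; (ii) the set of starting points for which the chain of four-point inequalities stays inside $G$ retains a positive fraction after $k\simeq|\log\tau|/\tau$ steps, which is where the hypothesis $\mathcal{H}^1(\Omega\setminus G)\leq\zeta$ is used decisively via pigeonhole; and (iii) the dyadic weights produced by the iteration fill out $(0,1)$ densely enough to pass from many particular concavity inequalities to genuine concavity. Balancing these three requirements is precisely what dictates the particular exponent $\alpha_\tau=\tau/(16|\log\tau|)$ and the prefactor $\tau^{-\omega_0}$ appearing in the statement.
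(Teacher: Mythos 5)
You should first note that the paper itself contains no proof of this statement: it is quoted (in its one-dimensional form) directly from \cite{FigalliJerison} and used as a black box, so the only meaningful benchmark is the original Figalli--Jerison proof of their Lemma~3.6. Measured against that, your proposal correctly identifies the overall skeleton of their argument: replace $\psi$ by a function with convexified level sets using hypothesis (5), exploit the four-point inequality at many scales to upgrade the single fixed weight $\tfrac{1-\lambda}{2-\lambda}$ to an approximate concavity statement at a dense family of weights, and finally compare with a concave envelope. That is indeed the right roadmap, and the way you expect $\alpha_\tau$ and $\tau^{-\omega_0}$ to arise from balancing iteration depth against error accumulation is qualitatively the correct picture.

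However, as a proof the proposal has genuine gaps exactly where the real difficulty lies. In Step~2, the assertions that after $k\simeq|\log\tau|/\tau$ iterations the accessible weights are dense, that the additive error only grows like $k\sigma$, and that a ``Fubini/pigeonhole'' argument keeps all intermediate points $T_{1,2},T_{2,1}$ inside $G$ up to a bad set of measure $O(k\zeta)$, are precisely the technical content of \cite{FigalliJerison}; there the corresponding construction is a careful multi-scale induction taking several auxiliary lemmas, and none of your three claims is justified by what you write (in particular, each application of the four-point inequality requires \emph{all four} points to lie in $G$, and controlling the set of admissible starting configurations after $k$ nested applications is not a one-line pigeonhole). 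In Step~3, the argument that ``a maximal chord of $\Psi$ sitting strictly above $\hat\psi$ witnesses a failure of approximate concavity at one of its endpoints and is thus bounded by the error'' does not work as stated: approximate concavity is available only outside an exceptional set and only with an additive error per application, so a single chord cannot be dismissed by one application, and summing ``over all gaps'' needs a quantitative covering or layer-cake argument of the type FJ actually carry out. Finally, the specific values $\alpha_\tau=\tau/(16|\log\tau|)$ and the existence of an absolute $\omega_0$ are asserted to ``emerge from balancing'' without any computation, whereas they are the output of the bookkeeping you have skipped. So the proposal is a reasonable reconstruction of the strategy, but it is a roadmap rather than a proof, and the missing pieces are exactly the ones the cited reference was invoked to supply.
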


We are almost ready to apply Lemma~\ref{thm:near-concave-lemma}: we change variables and set $\tilde{a}(T') = a(M T').$ 

If $T'_1,T'_2,T'_{1,2},T'_{2,1} \in \log(F_M)/M$ and $\lambda \in [\tau, 1-\tau],$ then the four-point inequality 
\eqref{eq:four-point-inequality-2} holds for $\tilde{a},$ with $\sigma = \frac{2\eps^{\frac14}}{\tau^{5/2}}.$ Moreover, from the properties of $\log(F_M),$ we obtain 
$$\H^1(\text{co}(\log(F_M)/M) \setminus (\log(F_M)/M)) \le \eps^{\frac18}.$$

From that, we see that $\tilde{\Omega}_M := \text{co}(\log(F_M)/M)$ is an interval that differs by at most $\eps^{\frac18}$ from the interval $[-1,1],$ and thus can be written as $T_0 + I,$ with $I=[-r,r]$ and $|r-1| \le 2\eps^{\frac18}$,
and $T_0 \in \R$ with $|T_0| \le \eps^{\frac18}.$ 

Defining the function $\tilde{a}'(T'') = \tilde{a}(T' + T_0)$ preserves conditions (1), (2), (4), and (5), in Lemma~\ref{thm:near-concave-lemma}. In addition, now also condition (3)  is fulfilled. Furthermore, by Lemma~\ref{thm:cutting-support}, we have $\tilde{a}'$ is bounded in absolute value by $\kappa = \frac{c}{\tau^4} |\log\eps|^{\frac4\tau},$ with $c$ an absolute constant. 

Finally, as the function $a$ is nonincreasing on 
$\R,$ the level sets of $\tilde{a}'$ are all intervals.
Hence we may take $H$ to be the support of $\tilde{a}'$ in \eqref{eq:near-concave}, and $\zeta = 4\eps^{\frac18}$. 

Therefore, by Lemma~\ref{thm:near-concave-lemma}, there is a concave function $\tilde{\mathfrak{a}}' :\tilde{\Omega}'_M := \tilde{\Omega}_M - T_0 \to [-2\kappa, 2\kappa]$ such that 
\[
\int_{\log(F_M)/M - T_0} |\tilde{\mathfrak{a}}'(T) - \tilde{a}'(T)| \, d T \le \kappa \tau^{-\omega_0} \cdot \frac{\eps^{\frac{\alpha_{\tau}}8}}{\tau^{5\alpha_{\tau}/2}}.
\]
Thus, the function $\tilde{\mathfrak{a}}(T) = \tilde{\mathfrak{a}}'(T - T_0)$ satisfies 
\[
\int_{\log(F_M)/M} |\tilde{\mathfrak{a}}(T) - \tilde{a}(T)| \, d T \lesssim  |\log\eps|^{\frac4\tau} \frac{\eps^{\frac{\alpha_{\tau}}8}}{\tau^{4+\omega_0}}.
\]
This follows from the definition of $\kappa$ and the fact that $\tau^{\alpha_{\tau}} = e^{-\tau/16},$ which is bounded from below and above whenever $\tau \in (0,1/2].$ Changing variables $T = T'/M$ above yields that $\mathfrak{a}(T) = \tilde{\mathfrak{a}}(T/M)$ satisfies  (recall that $M = \theta \log(1/\eps)$)
\begin{equation}\label{eq:almost-concave-a}
\int_{\log(F_M)} |\mathfrak{a}(T') - a(T')| \, d T' \lesssim |\log\eps|^{1+\frac4\tau} \frac{\eps^{\frac{\alpha_{\tau}}8}}{\tau^{4+\omega_0}}.
\end{equation}
We observe that, if we denote by $\Omega_M = M \tilde{\Omega}_M$ the domain of definition of $\mathfrak{a}$,
then it follows from the considerations above that $\H^1([-M,M] \setminus \Omega_M) \lesssim  |\log\eps| \eps^{\frac18}.$ 

Notice that the process above can be adapted verbatim to $b,$ and we find a concave function $\mathfrak{b}:\Omega_M \to [-2\kappa, 2\kappa]$ such that 
\begin{equation}\label{eq:almost-concave-b} 
\int_{\log(F_M)} |\mathfrak{b}(T') - b(T')| \, d T' \lesssim  |\log\eps|^{1+\frac4\tau} \frac{\eps^{\frac{\alpha_{\tau}}8}}{\tau^{4+\omega_0}}.
\end{equation}

Let, for shortness, $\omega_1 := 4+\omega_0.$ We must now ensure that $\mathfrak{a}, \mathfrak{b}$ satisfy the requirements of distribution functions. Indeed, in case $\mathfrak{a}$, $\mathfrak{b}$ are both nonincreasing on the subinterval $I_M = [-3M/4, 3M/4] \subset \Omega_M,$ we do not change them. 

On the other hand, if either $\mathfrak{a}$ or $\mathfrak{b}$ are not nonincresing on such a large interval, we use Chebyshev's inequality in conjunction with \eqref{eq:almost-concave-a} and \eqref{eq:almost-concave-b}. 

This implies that there is a set $\mathcal{F} \subset \log(F_M)$ such that $\H^1(\log(F_M) \setminus \mathcal{F}) \le \tau^{-\frac{\omega_1}{2}}\eps^{\frac{\alpha_{\tau}}{32}},,$ and 
\[
|\mathfrak{b}(T) - b(T)| + |\mathfrak{a}(T)-a(T)| \lesssim \tau^{-\frac{\omega_1}{2}}\eps^{\frac{\alpha_{\tau}}{32}}, \qquad \forall\, T \in \mathcal{F}.
\]
Changing $\mathfrak{a},\mathfrak{b}$ on a zero measure set, we may suppose that both are lower semicontinuous. Suppose then without loss of generality that $\mathfrak{a}$ attains its maximum at a point $T_0 \in I_M.$ 

As $\H^1(\Omega_M \setminus \mathcal{F}) \lesssim \tau^{-\frac{\omega_1}{2}}\eps^{\frac{\alpha_{\tau}}{32}},,$ there is a point $T_1 \in \mathcal{F}$ such that 
$$|T_0 - T_1| \lesssim \tau^{-\frac{\omega_1}{2}}\eps^{\frac{\alpha_{\tau}}{32}},.$$ 
Analogously, there is a point $T_2 \in \mathcal{F}$ such that $|T_2 + M| \lesssim \tau^{-\frac{\omega_1}{2}}\eps^{\frac{\alpha_{\tau}}{32}},,$ thus,
\begin{equation}\begin{split}\label{eq:tightness-concave}
\mathfrak{a}(T_0) - \mathfrak{a}(T_2) &\le |\mathfrak{a}(T_2) - a(T_2)| + a(T_1) - a(T_2) + |a(T_1) - \mathfrak{a}(T_1)| + |\mathfrak{a}(T_1) - \mathfrak{a}(T_0)| \cr
				      &\le  c \tau^{-\frac{\omega_1}{2}}\eps^{\frac{\alpha_{\tau}}{32}},+ |\mathfrak{a}(T_1) - \mathfrak{a}(T_0)|. \cr
\end{split}\end{equation}
On the other hand, by concavity,
\begin{equation}\label{eq:concavity-use-1}
\mathfrak{a}(T_1) \ge \gamma \mathfrak{a}(T_0) + (1-\gamma) \mathfrak{a}(T_2), \qquad \text{with $\gamma \in (0,1)$ such that }\gamma T_0 + (1-\gamma)T_2 = T_1.
\end{equation}
By the way we chose $T_0,T_1,T_2,$ we get 
\[
\tau^{-\frac{\omega_1}{2}}\eps^{\frac{\alpha_{\tau}}{32}}, \gtrsim |T_1 - T_0| = (1-\gamma)|T_0 - T_2| \ge \left(\frac{M}{4} - c \tau^{-\frac{\omega_1}{2}}\eps^{\frac{\alpha_{\tau}}{32}}, \right)(1-\gamma).
\]
Thus, if $\eps>0$ is sufficiently small, we have 
\[
\gamma \ge 1 - 10 \tau^{-\frac{\omega_1}{2}}\eps^{\frac{\alpha_{\tau}}{64}},.
\]
Also, by boundedness of $\mathfrak{a},$ 
\begin{equation}\label{eq:bound-difference-a}
|\mathfrak{a}(T_1) - \mathfrak{a}(T_0)| \lesssim |\log\eps|^{\frac4\tau} \tau^{-\frac{3\omega_1}{2}}\eps^{\frac{\alpha_{\tau}}{64}},.
\end{equation}
Combining \eqref{eq:bound-difference-a} and \eqref{eq:tightness-concave} implies 
$$\mathfrak{a}(T_0) \le \mathfrak{a}(T_2) + c |\log\eps|^{\frac4\tau} \tau^{-\frac{3\omega_1}{2}}\eps^{\frac{\alpha_{\tau}}{64}},,$$
where $c > 0$ is an absolute constant, and so, by monotonicity,
\begin{equation}\label{eq:almost-constant-concave}
\mathfrak{a}(T_0) \le \mathfrak{a}(T) + c |\log\eps|^{\frac4\tau} \tau^{-\frac{3\omega_1}{2}}\eps^{\frac{\alpha_{\tau}}{64}},, \qquad \forall \, T \in I_M, \,T < T_0.
\end{equation}
We thus define 
\[
\tilde{\mathfrak{a}}(T) = \begin{cases}
                          \mathfrak{a}(T),\, & \text{ if } T \in I_M, T \ge T_0; \cr 
                          \mathfrak{a}(T_0),\, & \text{ if } T \in I_M, T < T_0. \cr
                          \end{cases} 
\]
This new function, besides being concave, is also nonincreasing on $I_M,$ and, by \eqref{eq:almost-concave-a} and \eqref{eq:almost-constant-concave},
\[
\int_{\log(F_M) \cap I_M} |\tilde{\mathfrak{a}}(T) - a(T)| \, d T \lesssim  |\log\eps|^{1+\frac4\tau}  \tau^{-\frac{3\omega_1}{2}}\eps^{\frac{\alpha_{\tau}}{64}},.
\]
As both $a, \tilde{\mathfrak{a}}$ are bounded by $c|\log\eps|^{\frac4\tau}/\tau^4 $ on $I_M$ and $\H^1(I_M \setminus \log(F_M)) \le \eps^{\frac18},$ we conclude moreover that 
\[
\int_{I_M} |\tilde{\mathfrak{a}}(T) - a(T)| \, d T \lesssim |\log\eps|^{1+\frac4\tau} \tau^{-\frac{3\omega_1}{2}}\eps^{\frac{\alpha_{\tau}}{64}},.
\]
By symmetry, the same method can be applied to the function $b.$ Given the two resulting concave functions $\tilde{\mathfrak{a}}, \tilde{\mathfrak{b}},$ they define an almost-everywhere
unique pair $\tilde{f},\tilde{g}$ of functions such that 
\begin{equation*}
\{x \in \R \colon \tilde{f}(x) > t\} = (-\tilde{\mathfrak{a}}(\log t), \tilde{\mathfrak{a}}(\log t)),\qquad
\{x \in \R \colon \tilde{g}(x) > t\} = (-\tilde{\mathfrak{b}}(\log t), \tilde{\mathfrak{b}}(\log t)),
\end{equation*}
whenever $\log t \in I_M$ (that is, $t \in (\eps^{\frac{3\theta}4},\eps^{-\frac{3\theta}4})$), 
\begin{equation*}
\text{supp}(\tilde{f}) = \bigcup_{t \in (\eps^{\frac{3\theta}4},\eps^{-\frac{3\theta}4})} (-\tilde{\mathfrak{a}}(\log t), \tilde{\mathfrak{a}}(\log t)), \qquad
\text{supp}(\tilde{g}) = \bigcup_{t \in (\eps^{\frac{3\theta}4},\eps^{-\frac{3\theta}4})} (-\tilde{\mathfrak{b}}(\log t), \tilde{\mathfrak{b}}(\log t)), 
\end{equation*}
and
$\{x \in \R \colon \tilde{f}(x) > t\} = \{x \in \R \colon \tilde{g}(x) > s\}  = \emptyset$
for $t,s > \eps^{-\frac{3\theta}4}$ or whenever $\tilde{\mathfrak{a}}(\log t) = 0 = \tilde{\mathfrak{b}}(\log s).$

We claim that these functions
are log-concave. 
Indeed, if $\tilde{f}(x_1) > s_1$ and $\tilde{f}(x_2) > s_2$ with $s_1,s_2 \in (\eps^{\frac{3\theta}4},\eps^{-\frac{3\theta}4})$ then 
$$x_1 \in (-\tilde{\mathfrak{a}}(\log s_1), \tilde{\mathfrak{a}}(\log s_1)), \,\,x_2 \in (-\tilde{\mathfrak{a}}(\log s_2), \tilde{\mathfrak{a}}(\log s_2)).$$
By concavity, for any $t \in (0,1),$ 
\begin{equation*}\begin{split}
tx_1 + (1-t)x_2 &\in (-t\tilde{\mathfrak{a}}(\log s_1)-(1-t)\tilde{\mathfrak{a}}(\log s_2), t\tilde{\mathfrak{a}}(\log s_1) + (1-t)\tilde{\mathfrak{a}}(\log s_2))  \cr
                &\subseteq (-\tilde{\mathfrak{a}}(\log(s_1^t s_2^{1-t})),\tilde{\mathfrak{a}}(\log(s_1^t s_2^{1-t}))). \cr
\end{split}\end{equation*}
Thus $\tilde{f}(tx_1 + (1-t)x_2) > s_1^t s_2^{1-t},$ which concludes in this case.

The case $\max\{s_1,s_2\} > \eps^{-\frac{3\theta}4}$ or $\tilde{\mathfrak{a}}(\max\{\log s_1,\log s_2\}) = 0$ is trivial by definition.
Also, 
if $s_1 \in (0,\eps^{\frac{3\theta}4}),$ then $x_1 \in (-\tilde{\mathfrak{a}}(\log t_0),\tilde{\mathfrak{a}}(\log t_0)),$ for $t_0 \in (\eps^{\frac{3\theta}4},\eps^{-\frac{3\theta}4}),$ and thus we reduce to the previous one. By symmetry, the same holds for $\tilde{g},$ and the claim is proved. 

Finally, it remains to prove that $\|f-\tilde{f}\|_1 + \|g-\tilde{g}\|_1$ is small. By layer-cake representation, choosing $\theta = \alpha_{\tau}/100$ we have 
\begin{equation}\begin{split}\label{eq:layer-cake-1} 
\|f - \tilde{f}\|_1 & = \int_0^{\infty} \H^1(\{ f > t\} \Delta \{ \tilde{f} > t\}) \, d t = \int_{\R} |a(T) - \tilde{\mathfrak{a}}(T)| \, e^T \, d T \cr 
		    & \le \int_0^{\eps^{\frac{3\theta}4}} \left(\H^1(\{ f > t\}) + \H^1(\{\tilde{f} > t\})\right) \, d t + \eps^{-\frac{3\theta}4} \int_{I_M} |a(T) - \tilde{\mathfrak{a}}(T)| \, d T \cr 
		    & \lesssim \frac{\eps^{\frac{3\theta}4} |\log\eps|^{\frac4\tau}}{\tau^4} +  |\log\eps|^{1+\frac4\tau} \eps^{\frac{\alpha_\tau}{64}-\frac{3\theta}{4}}\tau^{-\frac{3\omega_1}{2}} \lesssim  \eps^{\frac{\alpha_{\tau}}{128}}|\log\eps|^{1+\frac4\tau} \tau^{-\frac{3\omega_1}{2}},
\end{split}\end{equation}
where we used  $\|f\|_{\infty},\|g\|_{\infty} \le 2$ and Lemma~\ref{thm:cutting-support}. Naturally, all such considerations hold in the exact same manner for $g, \tilde{g}.$

We now notice that, if $\eps >0 $ satisfies the smallness condition as in the statement of the result, then we may bound 
\[
|\log \eps| ^{1+\frac4\tau} \eps^{\frac{\alpha_{\tau}}{128}} \le \eps^{\frac{\alpha_{\tau}}{256}}. 
\] 
By Proposition~\ref{thm:close-stable}, this is enough to conclude the case of symmetrically decreasing functions. As we do not need an explicit estimate on the distance between $h$ and a log-concave function, we omit the final bound one could obtain using that proposition, limiting ourselves thus to the statement of Theorem~\ref{thm:rearranged-1-dim}  
\end{proof}

\section{The general case}\label{sec:general-case}

We now turn to the general case, assuming the results in the previous subsection. We shall prove the following result:

\begin{theorem}\label{thm:dim-1-stab} There is an explicitly computable constant $c_0 > 0$ such that the following holds. For $\tau \in (0,\frac{1}{2}]$ and $\lambda \in [\tau,1-\tau],$ if $f,g,h: \R \to \R_{\ge 0}$ are measurable functions for which \eqref{eq:condition} and \eqref{eq:almost-eq} hold, with $0 < \eps < c_0 e^{-M(\tau)},$ then there exist a log-concave function $\tilde{h}$ and $w \in \R$ such that 
\[
\int_{\R} |h-\tilde{h}| + \int_{\R} |a^{\lambda}f-   \tilde{h}(\cdot + \lambda\,w)| + 
\int_{\R} |a^{\lambda-1}g- \tilde{h}(\cdot + (\lambda-1)w)| <c_0\,
\frac{\varepsilon^{Q(\tau)}}{\tau^{\omega}} \int_{\R} h,
\]
where $\omega = \frac{5}{2} + \frac{\omega_0}{8},$ with $\omega_0$ being the exponent of $\tau$ in Lemma~\ref{thm:near-concave-lemma}, $M(\tau) =  10^{40} (\omega_0 + 4) \frac{|\log(\tau)|^4}{\tau^4} ,$ and $Q(\tau) = \frac{\tau^4}{2^{100} |\log \tau|^4}.$
\end{theorem}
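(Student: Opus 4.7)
The plan is to bootstrap from the symmetrized case (Theorem~\ref{thm:rearranged-1-dim}) level-by-level, using a 1D Brunn--Minkowski stability statement (Freiman's Theorem~\ref{BrunnMinkowskistabn1}) to pass from distributional information to shape information, and then repeating the four-point machinery of the previous section on the non-symmetric level sets. First, apply Theorem~\ref{thm:rearranged-1-dim} to the rearrangements $(f^*, g^*, h^*)$ — which satisfy the same hypotheses — to obtain even log-concave $\tilde f^*, \tilde g^*$ with $\|f^*-\tilde f^*\|_1+\|g^*-\tilde g^*\|_1\lesssim \tau^{-\omega}\eps^{\tau/(2^{21}|\log\tau|)}$. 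Since the distribution functions $t\mapsto \H^1(A_t)$ and $t\mapsto \H^1(B_t)$ are rearrangement invariant, this already pins down $\H^1(A_t), \H^1(B_t)$ up to a small error, and shows that they are essentially the measures of intervals.

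Next, rerun the argument in \eqref{eq:bound-level-sets}: there is a set $F\subset \R_+$ with $\H^1(\R_+\setminus F)\lesssim \eps^{1/4}$ on which
\[
\H^1(C_t)-(1-\lambda)\H^1(A_t)-\lambda \H^1(B_t)\le \tau^{-3/2}\eps^{1/4}.
\]
For every $t\in F$, combining this with the inclusion $(1-\lambda)A_t+\lambda B_t\subset C_t$ and Freiman's theorem gives intervals $I_t^A\supset A_t$, $I_t^B\supset B_t$ with $\H^1(I_t^A\setminus A_t)+\H^1(I_t^B\setminus B_t)\lesssim \tau^{-O(1)}\eps^{1/4}$. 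The principal technical difficulty is to align these intervals consistently across $t$ so that they form nested families with common (or nearly common) midpoints; I would do this by taking $I_t^A$ to be the convex hull of $A_t$, exploiting the monotonicity $A_{t_1}\supset A_{t_2}$ for $t_1<t_2$, and using the almost-equality in Brunn--Minkowski to show that the translation needed to nest $I_{t}^A$ inside $I_{t'}^A$ is of size $\tau^{-O(1)}\eps^{O(1)}$. Define the resulting bubble-shaped functions $\phi$ and $\psi$ by $\{\phi>t\}=I_t^A$, $\{\psi>t\}=I_t^B$; integrating the level-set errors via layer-cake yields $\|f-\phi\|_1+\|g-\psi\|_1\lesssim \tau^{-O(1)}\eps^{Q(\tau)}$.

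With $\phi,\psi$ now bubble-shaped, write $\{\phi>t\}=(a_f(t),b_f(t))$ and $\{\psi>t\}=(a_g(t),b_g(t))$. Since $\phi,\psi$ are bubble-shaped, the sup-convolution $\mu(z)=\sup_{(1-\lambda)x+\lambda y=z}\phi(x)^{1-\lambda}\psi(y)^\lambda$ is measurable and, together with $\phi,\psi$, satisfies the Prékopa--Leindler condition and the almost-equality \eqref{eq:almost-eq} with a slightly worse but still admissible $\eps$. Now repeat the analysis of the proof of Theorem~\ref{thm:rearranged-1-dim}, but carried out separately for the left- and right-endpoint functions. In logarithmic coordinates $\mathcal{A}_{f,g}(T)=a_{f,g}(e^T)$, $\mathcal{B}_{f,g}(T)=b_{f,g}(e^T)$, the argument leading to \eqref{eq:four-point-1} gives four-point concavity inequalities for $\mathcal{B}_f,\mathcal{B}_g$ and four-point convexity inequalities for $\mathcal{A}_f,\mathcal{A}_g$ (the sign flips because the left endpoint reverses orientation under Minkowski addition), with defect $\tau^{-O(1)}\eps^{1/4}$ on a set $\log F_M$ of large measure. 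Apply Lemma~\ref{thm:near-concave-lemma} (and its trivial convex variant via $\psi\mapsto-\psi$) to obtain convex $m_f,m_g$ and concave $n_f,n_g$ approximating $\mathcal{A}_{f,g},\mathcal{B}_{f,g}$ in $L^1$ on $\log F_M$. The pair $(m_f,n_f)$ defines a log-concave function $\tilde\phi$ via $\{\tilde\phi>t\}=(m_f(\log t), n_f(\log t))$, and analogously $\tilde\psi$; the monotonicity/tightness fix used at the end of the proof of Theorem~\ref{thm:rearranged-1-dim} is applied in the same way to guarantee that the approximations are actually distribution functions.

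A layer-cake computation identical to \eqref{eq:layer-cake-1}, combined with the truncation estimates of Lemma~\ref{thm:cutting-support}, yields $\|\phi-\tilde\phi\|_1+\|\psi-\tilde\psi\|_1\lesssim \tau^{-\omega}\eps^{Q(\tau)}$; combining this with the previous paragraph gives $\|f-\tilde\phi\|_1+\|g-\tilde\psi\|_1$ of the same order. Finally, feed $(f,g,h,\tilde\phi,\tilde\psi)$ into Proposition~\ref{thm:close-stable}: its output is a log-concave $\tilde h$ and $w\in\R$ satisfying the three $L^1$-bounds asserted in the theorem, with exponents in $\eps$ and $\tau$ matching the stated $Q(\tau)$ and $\omega$ (up to routine bookkeeping of the constants coming from Freiman, from Lemma~\ref{thm:near-concave-lemma}, and from Theorem~\ref{PLstab-logconv-dim1} inside Proposition~\ref{thm:close-stable}). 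The main obstacle is Step~3, the alignment of the Freiman-provided intervals $I_t^A$ across $t$: Freiman's theorem is level-by-level and unique only up to translation, so producing a single global bubble-shaped $\phi$ with interval super-level sets that are simultaneously close to the $A_t$'s requires using the PL inequality itself to couple the translations, and this is where most of the $\eps$-exponent budget is consumed.
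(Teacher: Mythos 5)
Your proposal follows essentially the same route as the paper's proof: use the rearranged case to control the sizes of level sets, apply Freiman's theorem to the convex hulls of the level sets (which resolves the alignment issue automatically, since convex hulls of nested sets are nested) to build bubble-shaped approximants $\overline f,\overline g$ and their sup-convolution $\overline h$, then run the four-point/Lemma~\ref{thm:near-concave-lemma} machinery separately on the left endpoints (convex side) and right endpoints (concave side) to produce log-concave $\tilde f,\tilde g$, and finish with Proposition~\ref{thm:close-stable}. This matches the paper's Steps 1--4, including the verification that the sup-convolution satisfies \eqref{eq:almost-eq} with a degraded power of $\eps$, so the argument is correct and not a genuinely different approach.
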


As pointed out in the introduction, in order to prove such a result we shall break the proof into several steps.\\

\noindent\textbf{$\bullet$ Step 1: finding regular functions $\overline{f},\overline{g},\overline{h}$ that satisfy \eqref{eq:condition} and \eqref{eq:almost-eq} with a possibly smaller power of $\eps$.} Once more, we assume the reductions made in Sections~\ref{sec:tail-estimates} and~\ref{sec:rearranged-version} to hold. That is, we have $\|f\|_1 = \|g\|_1 = 1, \, \min\{\|f\|_{\infty}, \|g\|_{\infty}\} = \|f\|_{\infty} = 1.$ Lemma~\ref{thm:control-sup} yields then that 
$$\|g\|_{\infty} \in (1,1+c\tau^{-\frac32} \eps^{\frac12}).$$
Also, as $\|f\|_1 = \|g\|_1 = 1,$ using notation from Lemma~\ref{thm:cutting-support}, 
\[
\eps > \int_0^{\infty} \left( \H^1 (C_t) - (1-\lambda) \H^1(A_t) - \lambda \H^1(B_t) \right) \, d t \ge 0. 
\]
Thus, by an argument entirely analogous to that used to prove \eqref{eq:bound-level-sets}, we obtain once more 
\[
\tau^{-\frac32} \eps^{\frac12} \gtrsim \int_0^{\infty} \left| \H^1 (C_t) - (1-\lambda)\H^1(A_t) - \lambda \H^1(B_t) \right| \, d t.
\]
We obtain once more that the maximal measurable set $F \subset (0,+\infty)$ of $t \in \R_+$ so that 
\begin{equation}\label{eq:almost-brunn-minkowski-1d}
\left|\H^1(C_t) -(1-\lambda)\H^1(A_t) -\lambda(B_t) \right| \le \tau^{-\frac32} \eps^{\frac14}, \,\, \forall \, t \in F
\end{equation}
satisfies $\H^1(\R_+ \setminus F) \lesssim \eps^{\frac14}.$ Moreover, if $t < 1 - c \tau^{-\frac32} \eps^{\frac12},$ we know that $C_t \supset (1-\lambda)A_t + \lambda B_t.$ Thus, 
\begin{equation}\label{eq:near-eq-brunn-minkowski}
0 \le \H^1(C_t) - (1-\lambda)\H^1(A_t) - \lambda \H^1(B_t) \lesssim \tau^{-\frac32}  \eps^{\frac14}, \, \, \forall \, t \in F \cap (0,1-c\tau^{-\frac32} \eps^{\frac12}).
\end{equation}
We now wish to employ Freiman's theorem in order to conclude that the convex hull of the level sets $A_t,B_t$ are not to far off from $A_t, B_t$ themselves. 

In fact, by the considerations in Section~\ref{sec:rearranged-version}, we know that there are log-concave functions $\tilde{f}^*, \tilde{g}^*$such that 
\[
\|f^* - \tilde{f}^*\|_1 + \|g^* - \tilde{g}^*\|_1 \lesssim \tau^{-\frac{3\omega_1}{2}} \eps^{\frac{\alpha_{\tau}}{256}},
\]
where $f^*,g^*$ denote the symmetric decreasing rearrangements of $f,g$, respectively. By the reductions in the proof of Proposition~\ref{thm:close-stable},
we may suppose that \eqref{logconvmaxclose} holds for the functions $\tilde{f}^*,\tilde{g}^*.$ In particular, applying it in conjunction with Lemma~\ref{logconvdim1}  to these functions, we conclude that 
\[
\H^1(\{t > 0 \colon \H^1(\{\tilde{f}^* > t\}) \le \eps^{\delta} \}) \lesssim \eps^{\delta},
\]
for all $\delta>0.$ By writing 
\[
\|f^* - \tilde{f^*}\|_1 = \int_0^{\infty} \H^1(\{f^* > t\} \Delta \{\tilde{f^*} > t\}) \, d t \lesssim \tau^{-\frac{3\omega_1}{2}} \eps^{\frac{\alpha_{\tau}}{256}}
\]
and using the argument with Chebyshev's inequality we have extensively employed throughout this manuscript, we obtain 
\[
\H^1(\{t > 0 \colon \H^1(\{f^* > t\}) \le \eps^{\delta} \}) \lesssim \eps^{\delta}
\]
for all $\delta\in (0, \frac{\alpha_{\tau}}{1024}),$ and $\varepsilon >0$ sufficiently small (independently of $\tau>0$). Thus, by  equimeasurability of the rearrangement, 
\[
\H^1(\{t > 0 \colon \H^1(\{f > t\}) \le \eps^{\delta} \}) \lesssim \eps^{\delta},
\]
for all $\delta < \alpha_{\tau}/1024.$ In particular, we see that 
$$\H^1(A_t) > \eps^{\frac{\alpha_{\tau}}{2048}}, $$
whenever $t \in F' \subseteq F \cap (0,1-c\tau^{-\frac32}\eps^{\frac12}),$ where $\H^1(F\setminus F') \le \eps^{\frac{\alpha_{\tau}}{2048}}.$ The same holds for $g,$ and thus we denote still by $F'$ the set where the above properties hold for both $f$ and $g$.

Noticing that, for $\eps \leq  \tau^4 \ll 1$,
$$\min\{\H^1(A_t),\H^1(B_t)\} > \eps^{\frac{\alpha_{\tau}}{2048}} \gg \tau^{-\frac32} \eps^{\frac14}, \qquad \forall \, t \in F',$$
thanks to \eqref{eq:almost-brunn-minkowski-1d} we can apply Freiman's theorem. This yields that
\begin{equation}\label{eq:freiman-consequence}
\H^1(\text{co}(A_t) \setminus A_t) + \H^1(\text{co}(B_t) \setminus B_t) \lesssim \tau^{-\frac32} \eps^{\frac14},
\end{equation}
for all  $t \in F'.$ Notice also that, since the sets $\{A_t\}_{t > 0}$ are nested, the same property holds for their convex hulls $\{\text{co}(A_t)\}_{t>0}.$ 

With this in mind, we set 
\begin{equation}\label{eq:convex-hulls}
\text{co}(A_t) = (a_f^1(t),b_f^1(t)), \qquad \text{co}(B_t) = (a_g^1(t),b_g^1(t)). 
\end{equation}
The main idea is to  slightly change the functions $a_f^1,a_g^1,b_f^1,b_g^1$, in order to construct two functions $\overline{f},\overline{g}$ close to $f,g$ respectively, and whose level sets are intervals coinciding with $\text{co}(A_t), \text{co}(B_t)$ for the vast majority of levels $t > \eps^{\theta},$ where $\theta > 0$ will be a small constant to be chosen later. 

By redefining on a set of zero measure, we may assume that the functions $a_f^1,a_g^1,b_f^1,b_g^1$ are all right-continuous. Then we define
\begin{equation}\begin{split}
b_f(t) = \sup_{t'>t, t' \in F'} b_f^1(t'), & \qquad b_g(t) = \sup_{t' > t, t' \in F'} b_g^1(t'), \cr 
a_f(t) = \inf_{t'>t, t' \in F'} a_f^1(t'), & \qquad a_g(t) = \inf_{t' > t, t' \in F'} a_g^1(t'). \cr
\end{split}\end{equation}

The functions $a_f,a_g,b_f,b_g$ defined in such a way are all, by definition, monotone. Moreover, modifying on a zero-measure set, we may suppose them to be right-continuous as well. 

Let now $\theta > 0$ be a fixed parameter, whose exact value we shall determine later. We define 
$$(\overline a_f,\overline b_f) = (a_f(\eps^{\theta}),b_f(\eps^{\theta})).$$
As $\H^1((0,1-c\tau^{-\frac32} \eps^{\frac12}) \setminus F') \le \eps^{\frac{\alpha_{\tau}}{2048}},$ as long as we choose $\theta < \alpha_{\tau}/2^{12}$ we may always find a point $t_0 \in F'$ so that $\frac{1}{100}\eps^{\theta} <t_0 < \eps^{\theta}.$ Thus, for all $t \ge \eps^{\theta},$ \eqref{eq:freiman-consequence} yields 
\begin{equation}
(b_f(t) - a_f(t)) \le (b_f(t_0) - a_f(t_0)) \le \H^1(A_{t_0}) + c\tau^{-\frac32} \eps^{\frac14} \lesssim \tau^{-4} |\log\eps|^{\frac4\tau},
\end{equation}
where we used Lemma~\ref{thm:cutting-support} in the last inequality. We then build the function $\overline{f}$ (the construction of $\overline{g}$ being analogous) 
as 
\[
\overline{f}(x) = \begin{cases} 
		0, & \, \text{ if } x \not \in (\overline a_f,\overline b_f), \cr
                \sup \{t \colon a_f(t) < x \} & \, \text{ if } x \le a_f(1), \cr 
                1 & \, \text{ if } x \in (a_f(1),b_f(1)), \cr 
                \sup \{t \colon x < b_f(t) \} & \, \text{ if } x \ge b_f(1). \cr 
               \end{cases}
\]
Notice now that, for $s \in (0,1),$ 
\begin{equation*}\begin{split}
& \{x \in \R \colon \overline{f}(x) > s\} =\cr 
\{ x \in \R \colon \exists\, t > s \text{ so that either } & a_f(t) < x  \text{ and } x \le a_f(1) \text{ or } b_f(t) > x \ge b_f(1)\} \cup (a_f(1),b_f(1))  \cr 
= \bigcup_{t > s} (a_f(t),b_f(t)) & = \left(\inf_{t > s} a_f(t), \sup_{t> s} b_f(t)\right) = (a_f(s),b_f(s)). \cr 
\end{split}\end{equation*}
Notice that we used the hypothesis of right-continuity of $a_f,b_f$ in order to obtain the last equality above. Thus, we have 
\[
\overline{A}_t =: \{ \overline{f} > t \} = \text{co}(A_t), \qquad \forall \, t \in F'. 
\]
This allows us to estimate 
\begin{multline}\label{eq:distance-regular-function}
\int_{\R} |\overline{f}(x) - f(x)| \, d x = \int_0^{\infty} \H^1(A_t \Delta \overline{A}_t) \, d t \le \int_0^{\eps^{\theta}} \left( \H^1(A_t) + \H^1(\overline{A}_{t_0})\right) \, d t \\
+ \int_{(\eps^{\theta},1) \cap F'} \H^1(\text{co}(A_t) \setminus A_t) \, d t 
					      + \int_{(\eps^{\theta},1) \setminus F'} \left( \H^1(A_t) + \H^1(\overline{A}_t)\right) \, d t  \lesssim \tau^{-4} \eps^{\theta} |\log\eps|^{\frac4\tau}, \cr 
\end{multline}
where we used \eqref{eq:freiman-consequence}, $\theta <\alpha_{\tau}/2^{12}$, and once more Lemma~\ref{thm:cutting-support}. The same conclusion holds in an entirely analogous way for $\|g-\overline{g}\|_1$.

We now build a function $\overline{h}$ so that \eqref{eq:condition} and \eqref{eq:almost-eq} are satisfied. In fact, we take the most natural choice 
\[
\overline{h}(z) = \sup_{(1-\lambda)x + \lambda y = z} \overline{f}(x)^{1-\lambda} \overline{g}(y)^{\lambda}.
\]
The level sets $\overline{C}_t = \{x \in \R \colon \overline{h}(x) > t\}$ satisfy, by definition,
\[
\overline{C}_t = \bigcup_{r^{1-\lambda}s^{\lambda} = t}^* ((1-\lambda)\overline{A}_r + \lambda \overline{B}_s).
\]
As the level sets of $\overline{f},\overline{g}$ are intervals, the function $\overline{h}$ is measurable. It remains to verify that we have a control of the form 
\[
\int_{\R} \overline{h} \le 1 + c(\tau)\eps^{\gamma}, 
\]
for some $\gamma>0$ and some function $c(\tau)>0.$ The strategy here is similar to the proof of Proposition~\ref{thm:close-stable}. 

First, we may choose $\theta = \alpha_{\tau}/2^{13}$ in \eqref{eq:distance-regular-function}, so that we obtain 
\begin{equation}\label{eq:close-to-bubble}
\| f - \overline{f}\|_1 = \int_0^{\infty} \H^1(\{f > t\} \Delta \{\overline{f} > t\}) \, d t \lesssim \tau^{-4}\eps^{\frac{\alpha_{\tau}}{2^{13}}}|\log\eps|^{\frac4\tau},
\end{equation}
(with the same estimate holding for $g, \overline{g}$) and then use Chebyshev's inequality in order to conclude that 
\begin{equation}\label{eq:level-set-control-bubble}
\H^1(\{t > 0 \colon \H^1(\{\overline{f} > t\}) \le \eps^{\delta} \}) \lesssim \eps^{\delta},
\end{equation}
for all $\delta < \alpha_{\tau}/2^{15}.$ Then, we fix $\gamma_0 < \alpha_{\tau}/2^{15}$ and define $\overline{S} \subset (0,+\infty)$ to be the largest measurable subset of $(0,+\infty)$ satisfying:
\begin{enumerate}
 \item $\min\{\H^1(\{\overline{f} > t\}),\H^1(\{\overline{g} > t\})\} > \eps^{\gamma_0}$ for all $t \in \overline{S} \cap (0,1+c\tau^{-4}\eps^{\frac12})$);
 \item $\H^1(\{f > t\} \Delta \{\overline{f} > t\}) + \H^1(\{g > t\} \Delta \{ \overline{g} > t\}) \, \lesssim \eps^{\frac{\alpha_{\tau}}{2^{15}}}$ for all $t \in \overline{S}.$
\end{enumerate}
By \eqref{eq:close-to-bubble} and \eqref{eq:level-set-control-bubble}, we have $\H^1(\R_+ \setminus \overline{S}) \lesssim \tau^{-4}\eps^{\gamma_0}.$ Thus, for some absolute constant $c > 0$, 
there is an element $r_0 \in (1-c\tau^{-4}\eps^{\gamma_0},1+c\tau^{-4}\eps^{\gamma_0}) \cap \overline{S}.$ Fix this element until the end of the proof. 

Note that transformations of the form
$$(f,g,h) \mapsto (f(\cdot-x_0),g(\cdot+x_0),h),\qquad (f,g,h) \mapsto (f(\cdot-x_0),g(\cdot-x_0),h(\cdot-x_0))$$
preserve \eqref{eq:condition} and \eqref{eq:almost-eq} with the same constant. Also, they leave the set $\overline{S}$ defined above unaltered. Hence, with no loss of generality, we may suppose that the barycenters of $\{\overline{f} > r_0\}$ and $\{\overline{g} > r_0\}$ both coincide 
with the origin. Assume this additional fact until the end of the proof as well. 

Now we employ the same strategy as in the final part of the proof of Proposition~\ref{thm:close-stable}. Fix $t > \eps^{\frac{\tau\gamma_0}2}.$ It is not hard to see that the set $\{\overline{h} > t\}$ splits as 
\begin{multline*}
\overline{C}_t  = \bigcup^*_{\substack{{r^{1-\lambda}s^{\lambda}=t} \\{r,s \in \overline{S}} \\{r_0 > r,s > \eps^{\gamma_0}}}} ((1-\lambda)\overline{A}_r + \lambda\overline{B}_s) \cup \bigcup^*_{\substack{{r^{1-\lambda}s^{\lambda}=t} \\{r,s \in \overline{S}} \\{\text{ either } r> r_0 \text{ or } s > r_0 }}} ((1-\lambda)\overline{A}_r + \lambda\overline{B}_s) \\
\cup \bigcup^*_{\substack{{r^{1-\lambda}s^{\lambda}=t} \\{\text{ either } r \not \in \overline{S} \text{ or } s \not \in \overline{S}}}} ((1-\lambda)\overline{A}_r + \lambda\overline{B}_s) =: \overline{C}_t^1 \cup \overline{C}_t^2 \cup \overline{C}_t^3. 
\end{multline*}

\noindent\textit{Case 1: Analysis of $\overline{C}_t^1.$} By Young's convolution inequality and the definition of $\overline{S},$ we have 
\begin{equation}\begin{split}\label{eq:convolution-bound-2}
\| \chi_{(1-\lambda)A_r} * \chi_{\lambda B_s} - \chi_{(1-\lambda)\overline{A}_r} * \chi_{\lambda\overline{B}_s} \|_{\infty} &\le \|\chi_{(1-\lambda)A_r} - \chi_{(1-\lambda)\overline{A}_r}\|_1 + \|\chi_{\lambda B_s} - \chi_{\lambda \overline{B}_s}\|_1 \cr
&\lesssim \eps^{\frac{\alpha_{\tau}}{2^{15}}} \qquad \forall \, r,s \in  \overline{S}. \cr 
\end{split}\end{equation}
On the other hand, by the definition of $\overline{S}$ and the fact that we are analyzing $\overline{C}_t^1,$ we have that  $$\min\{(1-\lambda)\H^1(\overline{A}_r),\lambda \H^1(\overline{B}_s)\} \ge \tau \eps^{\gamma_0}.$$ 
We thus have the convolution estimate 
\begin{equation}\label{eq:convolve-estimate}
\chi_{(1-\lambda)\overline{A}_r} * \chi_{\lambda \overline{B}_s}(x) > 3 \eps^{2\gamma_0}
\end{equation}
whenever 
\[
x \in \left( (1-\lambda)a_f(r) + \lambda a_g(s)+ 3 \eps^{2\gamma_0} , (1-\lambda)b_f(r) + \lambda b_g(s) - 3 \eps^{2\gamma_0}\right). 
\]
Since $(1-\lambda)a_f(r) + \lambda a_g(s) \le - \eps^{\gamma_0},$ $(1-\lambda)b_f(r) + \lambda b_g(s) \ge  \eps^{\gamma_0}$, and $r,s \in (\eps^{\gamma_0},r_0)$, due to the fact that the barycenters of 
$\overline{A}_{r_0}$ and $\overline{B}_{r_0}$ coincide with  the origin, we have that  the set
\[
\left( (1-\lambda)a_f(r) + \lambda a_g(s)+ 3 \eps^{2\gamma_0} , (1-\lambda)b_f(r) + \lambda b_g(s) - 3 \eps^{2\gamma_0}\right) 
\]
contains $(1-\eps^{\frac{\gamma_0}4}) \left( (1-\lambda)\overline{A}_r + \lambda \overline{B}_s\right)$
whenever $\gamma_0 < \alpha_{\tau}/2^{15}.$ 

On the other hand, \eqref{eq:convolution-bound-2} and \eqref{eq:convolve-estimate} imply that 
$$x \in \supp(\chi_{(1-\lambda)A_r} * \chi_{\lambda B_s}) = (1-\lambda)A_r + \lambda B_s.$$ 
Thus, 
\[
(1-\lambda)\overline{A}_r + \lambda\overline{B}_s \subset \frac{1}{1-\eps^{\frac{\gamma_0}4}} ((1-\lambda)A_r+ \lambda B_s )\subset \frac{1}{1-\eps^{\frac{\gamma_0}4}} \{ h > t \},
\]
hence
\[
\overline{C}_t^1 \subset \frac{1}{1-\eps^{\frac{\gamma_0}4}} C_t. 
\]

\noindent\textit{Case 2: Analysis of $\overline{C}_t^2 \cup \overline{C}_t^3.$} Recall that, by assumption, $t > \eps^{\frac{\tau\gamma_0}2}.$ Hence, since $\|\overline{f}\|_{\infty}, \|\overline{g}\|_{\infty} \le 2,$ we readily obtain 
$$r, s \gtrsim \eps^{\frac{\gamma_0}2}.$$
Since $\H^1(\R_+ \setminus \overline{S}) \le \eps^{\gamma_0},$ there exist $r',s' \in \overline{S}$, with $r',s' \in( \eps^{\gamma_0},r_0),$ such that 
$|r-r'| + |s-s'| \le \eps^{\gamma_0}$ and $r > r', \, s > s'.$ Therefore, 
\[
(1-\lambda)\overline{A}_r + \lambda\overline{B}_s \subset (1-\lambda)\overline{A}_{r'} + \lambda\overline{B}_{s'}  \subset \frac{1}{1-\eps^{\frac{\gamma_0}4}} \{ h >(r')^{1-\lambda} (s')^{\lambda} \} \subset \frac{1}{1-\eps^{\frac{\gamma_0}4}} \{ h > t - \eps^{\tau \gamma_0}\},
\]
which implies 
\[
\overline{C}_t \subseteq \frac{1}{1- \eps^{\frac{\gamma_0}4}} \{ h > t - \eps^{\tau \gamma_0}\}, \qquad \forall \, t > \eps^{\frac{\tau\gamma_0}2}. 
\]
Moreover, since $\supp (\overline{h}) \subset (1-\lambda)\supp(\overline{f}) + \lambda \supp(\overline{g})$ and all sets involved are intervals, $\H^1(\supp(\overline{h})) \lesssim \tau^{-4} |\log\eps|^{\frac4\tau}.$ Thus, 
\begin{multline*}
\int_{\R} \overline{h} = \int_0^{\infty} \H^1(\{\overline{h} > t\}) \, d t \le \int_0^{\frac12 \eps^{\frac{\tau\gamma_0}2}}\H^1(\supp(\overline{h})) \, d t \\+ \frac{1}{1-\eps^{\frac{\gamma_0}4}} \int_{\frac12 \eps^{\frac{\tau\gamma_0}2}}^{\infty} \H^1(\{ h > t\})\, d t \le 1+ \frac{c}{\tau^4} \eps^{\frac{\tau\gamma_0}2} |\log\eps|^{\frac4\tau},
\end{multline*}
for some absolute constant $c>0$. This concludes Step 1, as long as we take  $\gamma \in (0,\frac{\tau\gamma_0}2)$ and $c(\tau) = \tau^{-4}.$ \\

\noindent\textbf{$\bullet$ Step 2: the functions $a_f,a_g,b_f,b_g$ are suitably close to satisfying $4-$point inequalities.} We now use similar methods to the ones employed in Section~\ref{sec:rearranged-version} in order to conclude that the functions we constructed are close to being concave. 

Indeed, for notational simplicity, we reset our construction from the beginning, additionally assuming the reductions and conclusions of  Step 1 to hold. In other words, we assume that 
$f,g,h$ satisfy \eqref{eq:condition} and \eqref{eq:almost-eq}, and moreover the level sets of $f,g$ are intervals. We further assume that $\|f\|_{\infty} = 1, \int_{\R} f = \int_{\R} g = 1,$ as in Section~\ref{sec:tail-estimates}. 

Once more, we employ the arguments in Sections~\ref{sec:tail-estimates} and~\ref{sec:rearranged-version}, together with Chebyshev's inequality, in order to show that there is a set of times $F \subset (0,+\infty)$ such that $\H^1(\R_+ \setminus F) \lesssim \eps^{\frac14},$ and moreover 
\[
\left|\H^1(C_t) -(1-\lambda) \H^1(A_t) -\lambda  \H^1(B_t)\right| \lesssim \tau^{-\frac32} \eps^{\frac14}, \qquad \forall \, t \in F. 
\]
Repeating the argument at the beginning of Step 1, we obtain the existence of a set $F' \subset (0,+\infty)$ such that 
\begin{enumerate}
\item 
$\H^1(\R_+ \setminus F') \lesssim \eps^{\delta}, \text{ whenever } \delta < \alpha_{\tau}/1024;$
 \item 
$
\left|\H^1(C_t) -(1-\lambda) \H^1(A_t) -\lambda  \H^1(B_t)\right| \lesssim \tau^{-\frac32} \eps^{\frac14}$  for all $t \in F'$; 
 \item 
 $
 \min\{\H^1(A_t),\H^1(B_t)\} \ge \eps^{\delta}$ for all $t \in (0,1+c\tau^{-\frac32} \eps^{\frac12}) \cap F'$, $\delta \le \alpha_{\tau}/1024$. 
\end{enumerate}
Hence, we define the set $\mathcal{F}_M' ":= \log(F') \cap[-M,M],$ $M = \theta\ \log(1/\eps)$ ($\theta < \delta/2$ to be chosen later). We see, from this definition and a change of variables, $\H^1([-M,M] \setminus \mathcal{F}_M') \lesssim \eps^{\frac{\delta}2},$ and  $\mathcal{F}_M'$ is such that the sets 
\begin{equation*}
\mathcal{A}_R = A_{e^R}=(\mathbf{a}_f(R),\mathbf{b}_f(R)),\quad
\mathcal{B}_S = B_{e^S}=(\mathbf{a}_g(S),\mathbf{b}_g(S)),\quad
\mathcal{C}_T = B_{e^T}=(\mathbf{a}_h(T),\mathbf{b}_h(T)), 
\end{equation*}
 satisfy
  \begin{equation}\label{eq:level-sets-general-almost}
  \left|\H^1(\mathcal{C}_T) - (1-\lambda)\H^1(\mathcal{A}_T) -\lambda  \H^1(\mathcal{B}_T )\right| \lesssim \tau^{-\frac32} \eps^{\frac14}, \qquad \forall \, T \in \mathcal{F}_M'
  \end{equation}
and 
 \begin{equation}\label{eq:level-sets-bound-below-general}
  \min\{\H^1(\mathcal{A}_T),\H^1(\mathcal{B}_T)\} \ge \eps^{\delta}, \qquad \forall \,T\,  \in (-\infty,\log(1+c\tau^{-\frac32} \eps^{\frac12})) \cap \mathcal{F}_M'.
 \end{equation}
We claim that, for $R,S,T \in \mathcal{F}_M'$ are so that $\mathcal{A}_R,\mathcal{B}_S \neq \emptyset,\, (1-\lambda)R+\lambda S=T,$ then  \begin{equation}\label{eq:almost-contained}
(1-\lambda)\mathcal{A}_R + \lambda \mathcal{B}_S \subset \left((1-\lambda)\a_f(T) + \lambda \a_g(T) - \frac{1}{1000} \eps^{\delta},(1-\lambda)\a_f(T) + \lambda \a_g(T)+ \frac{1}{1000}\eps^{\delta}\right). 
\end{equation}

Indeed, if this is not the case, then, by \eqref{eq:level-sets-bound-below-general} and the Brunn--Minkowski inequality, 
$$\H^1\left((1-\lambda)\mathcal{A}_R + \lambda\mathcal{B}_S\right) \ge \eps^{\delta},$$
and thus, as all sets involved are intervals, 
\[
\H^1\left(\left((1-\lambda)\mathcal{A}_R + \lambda\mathcal{B}_S\right) \setminus \left((1-\lambda)\mathcal{A}_T + \lambda \mathcal{B}_T \right) \right) \ge \frac{1}{1000}\eps^{\delta}.
\]
This implies, on the other hand, that
\[
\H^1\left( \mathcal{C}_T \setminus  \left( (1-\lambda) \mathcal{A}_T + \lambda \mathcal{B}_T\right)  \right) \ge \frac{1}{1000} \eps^{\delta},
\]
which, together with \eqref{eq:level-sets-general-almost} and the one-dimensional Brunn--Minkowski inequality, contradicts the definition of $\mathcal{F}_M',$ as long as we take $\eps \ll\tau^3$. Thus, whenever $R,S,T \in \mathcal{F}_M', 
(1-\lambda)R + \lambda S= T,  \mathcal{A}_R, \mathcal{B}_S \neq \emptyset,$ we have 
\begin{equation}\begin{split}\label{eq:three-point-double}
(1-\lambda)\a_f(R) + \lambda \a_g(S) &\ge (1-\lambda)\a_f(T) +\lambda \a_g(T) -\frac{1}{1000}\eps^{\delta},\cr
(1-\lambda) \b_f(R) + \lambda \b_g(S) &\le (1-\lambda) \b_f(T) + \lambda \b_g(T) + \frac{1}{1000} \eps^{\delta},\cr
\end{split}\end{equation}
which proves \eqref{eq:almost-contained}.

As indicated in Section~\ref{sec:rearranged-version}, we can apply \cite[Remark~4.1]{FigalliJerison} to translate the three-point inequalities presented in \eqref{eq:three-point-double} into the following \emph{four-point inequalities}:
\begin{equation}\begin{split}\label{eq:four-point-double}
\a_f(T_1) + \a_f(T_2) &\ge \a_f(T_{1,2}) + \a_f(T_{2,1}) - \frac{1}{\lambda} \eps^{\delta}, \cr 
\a_g(T_1) + \a_g(T_2) &\ge \a_g(T_{1,2}) + \a_g(T_{2,1}) - \frac{1}{\lambda} \eps^{\delta}, \cr 
\end{split}\end{equation}
\begin{equation}\begin{split}\label{eq:four-point-double-2}
\b_f(T_1) + \b_f(T_2) &\le \b_f(T_{1,2}) + \b_f(T_{2,1}) + \frac{1}{\lambda}\eps^{\delta}, \cr
\b_g(T_1) + \b_g(T_2) &\le \b_g(T_{1,2}) + \b_g(T_{2,1}) + \frac{1}{\lambda} \eps^{\delta}, \cr
\end{split}\end{equation}
whenever 
$$T_1,T_2 \in \mathcal{F}_M',\quad T_{1,2} = \frac{1}{2-\lambda} T_1  + \frac{1-\lambda}{2-\lambda} T_2 \in \mathcal{F}_M', \quad T_{2,1} = \frac{1}{2-\lambda} T_2 + \frac{1-\lambda}{2-\lambda} T_1 \in \mathcal{F}_M'.$$
This shows concludes this step, as the functions $a_f,a_g,b_f,b_g$ are close to $\a_f,\a_g,\b_f,\b_g,$ which themselves satisfy four-point inequalities. 

\noindent\textbf{$\bullet$ Step 3: Constructing the log-concave approximations.} We now employ Lemma~\ref{thm:near-concave-lemma} to the functions $\a_f,\a_g,\b_f,\b_g$. 

Indeed, fixing a level $r_0 > 1 - c \eps^{\delta}$ with $\min\{\H^1(\{f>r_0\}),\H^1(\{g>r_0\})\} \ge \eps^{\delta},$ we may suppose that the barycenters of the intervals $\{f > r_0\},\{g > r_0\}$ coincide with the origin; the existence of such a level follows once again by the definition and properties of the set $\mathcal{F}_M'.$ 

 After this reduction, the definition of $\mathcal{F}_M'$ and Lemma~\ref{thm:cutting-support} ensure that the additional hypothesis 
\[
|\a_f(T)| +|\b_f(T)| + |\a_g(T)| + |\b_g(T)| \lesssim \tau^{-4} |\log\eps|^{\frac4\tau}
\]
hold on a subset $\mathfrak{F} \subset \mathcal{F}_M'$ so that $\H^1(\mathcal{F}_M' \setminus \mathfrak{F}) \lesssim \eps^{\delta}$. We thus replace $\mathcal{F}_M'$ by $\mathfrak{F}$, and henceforth still denote it by $\mathcal{F}_M'.$ Notice also that, in such a set, one has $\a_f,\a_g$ nonpositive and $\b_f,\b_g$ nonnegative.

At the present point, one notices that all other prerequisites for Lemma~\ref{thm:near-concave-lemma} are satisfied, thus we may apply it to $\b_f,\b_g,$ and to $-\a_f,-\a_g$ (thanks to \eqref{eq:four-point-double} and \eqref{eq:four-point-double-2}). 

Applying Lemma~\ref{thm:near-concave-lemma} and
arguing as in Section~\ref{sec:rearranged-version}, we find functions $\mathfrak{b}_f,\mathfrak{b}_g,\mathfrak{a}_f,\mathfrak{a}_g$, defined on an interval 
$\Omega_M$ satisfying $\H^1((-M,M) \setminus \Omega_M) \lesssim \eps^{\frac{\delta}2},$ such that 
\begin{equation}\begin{split}\label{eq:first-concave-bound-general}
\int_{\mathcal{F}_M'} |\mathfrak{b}_f(T) - \b_f(T)| \, d T + \int_{\mathcal{F}_M'}|\mathfrak{a}_f(T) - \a_f(T)| \, d T \lesssim \frac{|\log\eps|^{\frac4\tau}}{\tau^{\omega_1}} \eps^{\frac{\delta\alpha_\tau}2},\cr
\int_{\mathcal{F}_M'} |\mathfrak{b}_g(T) - \b_g(T)| \, d T + \int_{\mathcal{F}_M'}|\mathfrak{a}_g(T) - \a_g(T)| \, d T \lesssim  \frac{|\log\eps|^{\frac4\tau}}{\tau^{\omega_1}} \eps^{\frac{\delta\alpha_\tau}2}.\cr
\end{split}\end{equation}
 Moreover, $\mathfrak{b}_f,\mathfrak{b}_g$ are b\emph{concave}, $\mathfrak{a}_f,\mathfrak{a}_g$ are \emph{convex}, and they are all 
bounded in absolute value by $c \tau^{-4} |\log\eps|^{\frac4\tau}$. 

Again, the considerations in Section~\ref{sec:rearranged-version} applied almost verbatim to $\b_f,\b_g, -\a_f,-\a_g$ imply that, by potentially decreasing the power of $\eps$ in the left-hand side of \eqref{eq:first-concave-bound-general}, we may suppose that 
$\mathfrak{a}_f,\mathfrak{a}_g,\mathfrak{b}_f,\mathfrak{b}_g$ are all \emph{monotone} on a smaller interval $I_M = (-3M/4,3M/4),$ and thus, as $\a_f,\a_g,\b_f,\b_g$ are themselves 
bounded by $c \tau^{-4} |\log\eps|^{\frac4\tau},$ 
\begin{equation}\begin{split}
 \int_{I_M}|\mathfrak{a}_f(T) - \a_f(T)| \, d T
 +\int_{I_M} |\mathfrak{b}_f(T) - \b_f(T)| \, d T  \lesssim \frac{|\log\eps|^{1+\frac4\tau}}{\tau^{\frac{3\omega_1}2}} \eps^{\frac{\delta\alpha_\tau}{16}},\cr
 \int_{I_M}|\mathfrak{a}_g(T) - \a_g(T)| \, d T
 +\int_{I_M} |\mathfrak{b}_g(T) - \b_g(T)| \, d T  \lesssim \frac{|\log\eps|^{1+\frac4\tau}}{\tau^{\frac{3\omega_1}2}} \eps^{\frac{\delta\alpha_\tau}{16}}.\cr
\end{split}\end{equation}
Similarly as before, we pick the unique pair $\tilde{f},\tilde{g}$ of functions such that 
\begin{equation*}\begin{split}
\{x \in \R \colon \tilde{f}(x) > t\} = (\mathfrak{a}_f(\log t), \mathfrak{b}_f(\log t)),\qquad
\{x \in \R \colon \tilde{g}(x) > t\} = (\mathfrak{a}_g(\log t), \mathfrak{b}_g(\log t)),
\end{split}\end{equation*}
whenever $\log t \in I_M$ (that is, $t \in (\eps^{\frac{3\theta}4},\eps^{-\frac{3\theta}4})$), 
\begin{equation*}\begin{split}
\text{supp}(\tilde{f}) = \bigcup_{t \in (\eps^{\frac{3\theta}4},\eps^{-\frac{3\theta}4})} (\mathfrak{a}_f(\log t), \mathfrak{b}_f(\log t)), \qquad
\text{supp}(\tilde{g}) = \bigcup_{t \in (\eps^{\frac{3\theta}4},\eps^{-\frac{3\theta}4})} (\mathfrak{a}_g(\log t), \mathfrak{b}_g(\log t)), 
\end{split}\end{equation*}
and  $\{x \in \R \colon \tilde{f}(x) > t\} = \{x \in \R \colon \tilde{g}(x) > s\}  = \emptyset$ for $t,s > \eps^{-\frac{3\theta}4}$ or whenever $\mathfrak{a}_f(\log t) = \mathfrak{b}_f(\log t) = 0 = \mathfrak{a}_g (\log s) = \mathfrak{b}_g(\log s).$

It follows from the convexity of $\mathfrak{a}_f,\mathfrak{a}_g,$ concavity of $\mathfrak{b}_f,\mathfrak{b}_g$ and the argument in Section~\ref{sec:rearranged-version} that these functions are log-concave. \\

\noindent\textbf{$\bullet$ Step 4: Conclusion.} We can finally conclude the proof.  Assume, as in previous sections, that $\|f\|_1 = \|g\|_1 = 1$ and $\min\{\|f\|_{\infty},\|g\|_{\infty}\} = \|f\|_{\infty} = 1.$ Moreover, we assume that \textbf{Steps 1, 2, 3} hold. Thus, using the functions $\tilde{f},\tilde{g}$ and the way we built them, we are led to estimate: 
\begin{equation}\begin{split}\label{eq:layer-cake-3} 
\|f - \tilde{f}\|_1 & = \int_0^{\infty} \H^1(\{ f > t\} \Delta \{ \tilde{f} > t\}) \, d t \cr 
   				    & \le \int_{I_M} |\a_f(T) - \mathfrak{a}_f(T)| \, e^T \, d T + \int_{I_M} |\b_f(T) - \mathfrak{b}_f(T)| e^T \, d T 
     				     + \int_0^{\eps^{\theta}} \H^1(\{ f > t \}) d t\\
     				     & \le \eps^{-\frac{3\theta}4} \left(\int_{I_M} |\a_f(T) - \mathfrak{a}_f(T)| \, d T +\int_{I_M} |\b_f(T) - \mathfrak{b}_f(T)| \, d T\right)  + 	\frac{c}{\tau^4} \eps^{\theta} |\log\eps|^{\frac4\tau} \\
     				     & \lesssim |\log\eps|^{1+\frac4\tau}\tau^{-\frac{3\omega_1}2}\eps^{\frac{\delta\alpha_\tau}{32}} \lesssim  \tau^{-\frac{3\omega_1}2} \eps^{\frac{\delta\alpha_\tau}{64}} , \cr 
\end{split}\end{equation}
by choosing $\theta = \frac{4}{3} \frac{\delta\alpha_\tau}{32}$ and using $\eps \ll e^{-10^{10} \frac{|\log \tau|^4}{\tau^4}}.$
Note that, in this computation, we  assumed $f$ and $g$ to fulfill the requirements in Steps 1-3. In doing so, we lose powers of $\eps$ along the way. More precisely, combining estimates from Section~\ref{sec:rearranged-version} and Steps 1-3, we have: 
\begin{enumerate}
\item We must not incorporate any further power from Section~\ref{sec:rearranged-version}, as it has only been used in the reduction to the case of functions whose level sets are intervals; 
\item In Steps 1-3, we must substitute $\eps \mapsto \frac{c}{\tau^4} \eps^{\frac{\tau \alpha_{\tau}}{2048}},$ by the reduction made in Step 1. 
\end{enumerate}
Thus, we conclude that if the functions $f,g,h$ satisfy \eqref{eq:condition} and \eqref{eq:almost-eq}, then there are log-concave functions $\tilde{f},\tilde{g}$ such that 
$$ \|f-\tilde{f}\|_1 + \|g-\tilde{g}\|_1 \le c\tau^{-\frac{3\omega_1}2} \eps^{\frac{\tau \alpha_{\tau}^3}{2^{30}}} =:  c\tau^{-\frac{3\omega_1}2} \eps^{Q_0(\tau)}.  $$
We are now in a position to use Proposition~\ref{thm:close-stable}. We choose $\eta = c\tau^{\frac{-3\omega_1}{2}} \eps^{Q_0(\tau)}.$ The condition $\eta < c' \tau^3$ for some $c' \in (0,1)$ becomes 
\begin{equation}\label{eq:smallness-epsilon} \eps \le c e^{-M(\tau)},
\end{equation}
where we define $M(\tau) = 10^{40} \omega_1 \frac{|\log(\tau)|^4}{\tau^4},$ and $c>0$ is an absolute constant. Under that condition, notice that all the smallness conditions in the proof above are also fulfilled.

Hence, thanks to Proposition~\ref{thm:close-stable} and the smallness condition \eqref{eq:smallness-epsilon}, there exists a log-concave function $\tilde{h}$ such that, for $f,g,h$ satisfying \eqref{eq:condition} and \eqref{eq:almost-eq}, if we let $a = \|g\|_1/\|f\|_1,$ then there is $w \in \R$ for which 
\begin{eqnarray*}
&\int_{\R}  |a^\lambda f(x) - \tilde{h}(x-\lambda\,w)| \, d x \lesssim \tau^{-\omega_2} \eps^{\frac{Q_0(\tau)}{32}}  \int_\R h,\\
&\int_{\R}  |a^{\lambda-1}g(x) -\tilde{h}(x+(1-\lambda)w)|  \lesssim \tau^{-\omega_2} \eps^{\frac{Q_0(\tau)}{32}}   \int_\R h,\\
&\int_{\R}  |h(x) - \tilde{h}(x)| \, d x \lesssim \tau^{-\omega_2} \eps^{\frac{Q_0(\tau)}{8}} 
\int_\R h.
\end{eqnarray*}
Here, we have let $\omega_2 = \frac{\omega_1}{8} + 2.$ Thus, noting the choices of $Q(\tau),M(\tau)$ in the statement of Theorem~\ref{thm:dim-1-stab}, we notice that this finishes the proof of that result, and thus also the proof of Theorem~\ref{PLstab} in dimension $n=1.$ 

\section{The high-dimensional case}\label{sec:hi-d-case} With the one-dimensional case already resolved in the previous section, we now employ a recent strategy by the first author and A. De \cite{BoroczkyDe} in order to reduce the higher-dimensional version to the one-dimensional one, with the aid of the stability version of the Brunn--Minkowski inequality proved by the second author and D. Jerison \cite{FigalliJerison}. Indeed, we note that the main result in one-dimension implies the following result:

\begin{corollary}\label{thm:multiplicative} Let $F,G,H:\R_+ \to \R_+$ be measurable functions such that 
\begin{equation}\label{eq:condition-2}
H(r^{1-\lambda} s^{\lambda}) \ge F(r)^{1-\lambda} G(s)^{\lambda}, \qquad \forall \, r,s \ge 0,
\end{equation}
where $\lambda \in [\tau,1-\tau]$ for some $\tau \in (0,1/2].$ Suppose that 
\begin{equation}\label{eq:almost-eq-2}
\int_{\R_+} H \le (1+\eps) \left(\int_{\R_+} F\right)^{1-\lambda} \left( \int_{\R_+} G \right)^{\lambda}
\end{equation}
holds for $0<\eps < e^{-M(\tau)}$. Then there are constant $a,b >0$, with $a/b = \|F\|_1/\|G\|_1$, such that 
\[
\int_{\R_+} |a^{-\lambda} F(b^{-\lambda} t) - H(t)| \, d t + \int_{\R_+} |a^{(1-\lambda)} G(b^{(1-\lambda)}t) - H(t)| \, d t
\lesssim \tau^{-\omega} \eps^{Q(\tau)}  \int_{\R_+} H. 
\]
Here, $\omega$ and $Q(\tau)$ are the same as in Theorem~\ref{thm:dim-1-stab}. 
\end{corollary}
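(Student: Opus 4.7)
The plan is to reduce \eqref{eq:condition-2}--\eqref{eq:almost-eq-2} to the additive one-dimensional Pr\'ekopa--Leindler stability in Theorem~\ref{thm:dim-1-stab} by means of the logarithmic change of variables that converts geometric means into arithmetic means. Concretely, I would set
$$\tilde F(x):=e^x F(e^x),\qquad \tilde G(y):=e^y G(e^y),\qquad \tilde H(z):=e^z H(e^z),\qquad x,y,z\in\R.$$
Substituting $r=e^x$ and $s=e^y$ in \eqref{eq:condition-2} and multiplying both sides by $e^{(1-\lambda)x+\lambda y}$ shows that
$$\tilde H\bigl((1-\lambda)x+\lambda y\bigr)\ \ge\ \tilde F(x)^{1-\lambda}\,\tilde G(y)^{\lambda}\qquad\forall\,x,y\in\R,$$
i.e.\ $(\tilde F,\tilde G,\tilde H)$ satisfy \eqref{eq:condition}. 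The same change of variables yields $\int_\R\tilde F=\int_{\R_+}F$, and likewise for $G,H$, so \eqref{eq:almost-eq-2} translates verbatim into \eqref{eq:almost-eq} for $(\tilde F,\tilde G,\tilde H)$ with the same $\eps$.

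Since $0<\eps<e^{-M(\tau)}$, Theorem~\ref{thm:dim-1-stab} applied to $(\tilde F,\tilde G,\tilde H)$ produces a log-concave function $\tilde h$ and a shift $w\in\R$ such that, with $A:=\int_{\R}\tilde G/\int_{\R}\tilde F=\|G\|_1/\|F\|_1$,
$$\int_\R|\tilde H-\tilde h|+\int_\R|A^{\lambda}\tilde F-\tilde h(\cdot+\lambda w)|+\int_\R|A^{\lambda-1}\tilde G-\tilde h(\cdot+(\lambda-1)w)|\ \lesssim\ \frac{\eps^{Q(\tau)}}{\tau^{\omega}}\int_\R\tilde H.$$
Moving the translation from $\tilde h$ onto $\tilde F$ and $\tilde G$ by the substitution $y=x+\lambda w$ (respectively $y=x+(\lambda-1)w$) and applying the triangle inequality with the $\tilde H-\tilde h$ term gives
$$\int_\R|A^{\lambda}\tilde F(y-\lambda w)-\tilde H(y)|\,dy+\int_\R|A^{\lambda-1}\tilde G(y+(1-\lambda)w)-\tilde H(y)|\,dy\ \lesssim\ \frac{\eps^{Q(\tau)}}{\tau^{\omega}}\int_\R\tilde H.$$

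Finally I would undo the logarithmic substitution by $y=\log t$, which gives $dy=dt/t$ and $\tilde H(y)\,dy=H(t)\,dt$, and similarly $\tilde F(y-\lambda w)\,dy=e^{-\lambda w}F(e^{-\lambda w}t)\,dt$. Hence, setting $b:=e^w$ and $a:=b/A$ (so that $a/b=1/A=\|F\|_1/\|G\|_1$ as required, and moreover $A^{\lambda}e^{-\lambda w}=a^{-\lambda}$ and $A^{\lambda-1}e^{(1-\lambda)w}=a^{1-\lambda}$), the two displayed $L^1$ bounds become exactly
$$\int_{\R_+}\bigl|a^{-\lambda}F(b^{-\lambda}t)-H(t)\bigr|\,dt+\int_{\R_+}\bigl|a^{1-\lambda}G(b^{1-\lambda}t)-H(t)\bigr|\,dt\ \lesssim\ \frac{\eps^{Q(\tau)}}{\tau^{\omega}}\int_{\R_+}H,$$
using that $\int_\R\tilde H=\int_{\R_+}H$. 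This is the claimed inequality.

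There is no genuine analytic obstacle here beyond Theorem~\ref{thm:dim-1-stab} itself; the only care needed is bookkeeping, namely verifying that the logarithmic change of variables is exactly the isomorphism between the multiplicative and additive forms of Pr\'ekopa--Leindler (both as regards the pointwise inequality and the integrals), and identifying the scalings $a$ and $b$ in terms of $A=\|G\|_1/\|F\|_1$ and $w$ so that the normalization $a/b=\|F\|_1/\|G\|_1$ holds.
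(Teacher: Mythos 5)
Your proposal is correct and follows essentially the same route as the paper: the exponential change of variables $\tilde F(x)=e^xF(e^x)$ (etc.) to pass to the additive form, an application of Theorem~\ref{thm:dim-1-stab}, a triangle inequality to replace the log-concave comparison function by $\tilde H$, and then undoing the substitution with the identifications $b=e^{w}$ and $a=e^{w}\|F\|_1/\|G\|_1$, exactly as in the paper's proof. The bookkeeping of the scalings ($A^{\lambda}e^{-\lambda w}=a^{-\lambda}$, $A^{\lambda-1}e^{(1-\lambda)w}=a^{1-\lambda}$) checks out, so no gap remains.
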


\begin{proof} We change variables and define $f(x) = F(e^x) e^x, \, g(x) = G(e^x) e^x, \, h(x) = H(e^x) e^x.$ These functions satisfy \eqref{eq:condition}, and, as 
\[
\int_{\R} f = \int_{\R_+} F, \, \int_{\R} g = \int_{\R_+} G, \, \int_{\R} h = \int_{\R_+} H,
\]
they also satisfy \eqref{eq:almost-eq}. By the result in Section~\ref{sec:general-case}, there is a constant $\eta \in \R$ such that 
\begin{equation*}\begin{split}
\int_{\R} |f(x) - (\|f\|_1/\|g\|_1)^{\lambda} h(x + \lambda \eta)| \, d x \lesssim \tau^{-\omega} \eps^{Q(\tau)} \|f\|_1, \cr
\int_{\R} |g(x) - (\|g\|_1\|f\|_1)^{1-\lambda} h(x+(\lambda-1)\eta)| \, d x \lesssim \tau^{-\omega} \eps^{Q(\tau)} \|g\|_1,
\end{split}\end{equation*}
for $Q(\tau)$ as in the statement of Theorem \ref{thm:dim-1-stab}. Changing variables back, we obtain 
\begin{equation*}\begin{split}
\int_{\R} |F(t) - e^{\lambda \eta}(\|F\|_1/\|G\|_1)^{\lambda} H(te^{\lambda \eta})| \, d t \lesssim \tau^{-\omega} \eps^{Q(\tau)} \|F\|_1, \cr
\int_{\R} |G(t) - e^{(\lambda - 1)\eta}(\|G\|_1\|F\|_1)^{1-\lambda} H(te^{(\lambda - 1)\eta})| \, d t \lesssim \tau^{-\omega} \eps^{Q(\tau)}  \|G\|_1,
\end{split}\end{equation*}
which implies that 
\begin{equation*}\begin{split}
\int_{\R} |e^{-\lambda \eta}(\|G\|_1/\|F\|_1)^{\lambda} F(e^{-\lambda\eta}s) - H(s)| \, d t \lesssim \tau^{-\omega} \eps^{Q(\tau)}  \|F\|_1^{1-\lambda} \|G\|_1^{\lambda}, \cr
\int_{\R} |e^{(1-\lambda)\eta}(\|F\|_1/\|G\|_1)^{1-\lambda}G(e^{(1-\lambda)\eta}s) - H(s)| \, d t \lesssim \tau^{-\omega} \eps^{Q(\tau)}  \|F\|_1^{1-\lambda} \|G\|_1^{\lambda}. \cr
\end{split}\end{equation*}
Taking $a =  \frac{e^{\eta} \|F\|_1}{\|G\|_1}, \, b = e^{\eta}$ and using the Pr\' ekopa--Leindler inequality on the right-hand side of the last expression implies the result. 
\end{proof}
 
Let $f,g,h:\R^n \to \R_+$ satisfy the $n-$dimensional version of \eqref{eq:condition}. We use Corollary~\ref{thm:multiplicative} for the triple $F,G,H$ defined by 
\begin{equation*}\begin{split}
\H^n(\{ x \in \R^n \colon f(x) > t\}) &= F(t), \cr 
\H^n(\{ x \in \R^n \colon g(x) > t\}) &= G(t), \cr 
\H^n(\{ x \in \R^n \colon h(x) > t\}) &= H(t). \cr
\end{split}\end{equation*}
By \eqref{eq:condition} and the $n-$dimensional Brunn-Minkowski inequality, we have 
$$H(r^{1-\lambda} s^{\lambda}) \ge \left((1-\lambda)F(r)^{1/n} + \lambda G(s)^{1/n} \right)^n,$$ 
whenever $F(s),G(r) > 0.$ Thus, using the weighted inequality between arithmetic and geometric means, we get the condition \eqref{eq:condition-2} for $F(s),G(r) > 0.$ Whenever
one of them is zero, \eqref{eq:condition-2} holds trivially, and thus we have verified \eqref{eq:condition-2}. By layer-cake representation, \eqref{eq:almost-eq-2} follows at 
once from \eqref{eq:almost-eq}. 

As conditions are verified, we are in position to use the following result:

\begin{lemma}\label{thm:level-set-measure-close}
If  $\eps \in (0,e^{-M_n(\tau)})$, and $f,g,h:\R^n\to\R_+$ satisfy \eqref{eq:condition}, \eqref{eq:almost-eq} and $\int_{\R^n}f=\int_{\R^n}g=1$,
then there is a dimensional constant $c_n > 0$ such that 
\begin{equation}
\label{eq:levelset-difference}
\int_0^\infty\left|F(t)-H(t)\,\right|\,d t + \int_0^\infty\left|G(t)-H(t)\,\right|\,d t \leq  c_n \tau^{-\frac{\omega}2-1} \eps^{\frac{Q(\tau)}2} .
\end{equation}
\end{lemma}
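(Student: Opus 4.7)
The strategy is to apply Corollary~\ref{thm:multiplicative} to the one-dimensional triple $(F,G,H)$ of distribution functions, and then to argue that the resulting rescaling constant is close to~$1$. First I would verify that $(F,G,H)$ satisfies the multiplicative Pr\'ekopa--Leindler condition \eqref{eq:condition-2}: by \eqref{eq:condition}, the level sets $A_r=\{f>r\}$, $B_s=\{g>s\}$, $C_t=\{h>t\}$ satisfy $(1-\lambda)A_r+\lambda B_s\subseteq C_{r^{1-\lambda}s^\lambda}$ whenever $A_r,B_s\neq\emptyset$, and the $n$-dimensional Brunn--Minkowski inequality combined with AM--GM yields $H(r^{1-\lambda}s^\lambda)\geq F(r)^{1-\lambda}G(s)^\lambda$. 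The normalization $\int_{\R^n}f=\int_{\R^n}g=1$ gives $\|F\|_1=\|G\|_1=1$ via the layer-cake formula, while $\|H\|_1\leq 1+\eps$. Thus Corollary~\ref{thm:multiplicative} applies with $a/b=1$, producing $a>0$ such that, after the change of variables $u=a^{-\lambda}t$ (resp.\ $u=a^{1-\lambda}t$),
\[
\|F-\alpha H(\alpha\,\cdot)\|_1 + \|G-\beta^{-1}H(\beta^{-1}\,\cdot)\|_1 \lesssim \tau^{-\omega}\eps^{Q(\tau)},
\]
where $\alpha:=a^{\lambda}$, $\beta:=a^{1-\lambda}$, so that $\alpha\beta=a$.

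Next I would quantitatively bound $|a-1|$. The first ingredient is an $n$-dimensional analog of Lemma~\ref{thm:control-sup}, proved via the $n$-dimensional Brunn--Minkowski inequality and the AM--GM stability \eqref{Holderstab}, giving $|\,\|f\|_\infty/\|g\|_\infty-1\,|\lesssim_n \tau^{-3/2}\eps^{1/2}$. The second ingredient is a support-endpoint matching argument: the $L^1$-closeness of $F$ (supported on $[0,\|f\|_\infty]$) to $\alpha H(\alpha\,\cdot)$ (supported on $[0,\|h\|_\infty/\alpha]$), together with the decay of these monotone nonincreasing functions near their right endpoints, should force these endpoints to agree up to an error of order $(\tau^{-\omega}\eps^{Q(\tau)})^{1/2}$, and similarly for $G$. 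Combining, one obtains $a=\alpha\beta\approx\|g\|_\infty/\|f\|_\infty\approx 1$, with the quantitative bound $|a-1|\lesssim_n\tau^{-\omega/2-1}\eps^{Q(\tau)/2}$.

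Given this control on $|a-1|$, since $H$ is nonnegative, nonincreasing, and satisfies $\int H\leq 1+\eps$, a direct computation (splitting $\alpha H(\alpha t)-H(t)=(\alpha-1)H(\alpha t)+(H(\alpha t)-H(t))$ and using monotonicity) yields $\|\alpha H(\alpha\,\cdot)-H\|_1\lesssim |\alpha-1|$, and analogously for the $\beta$-rescaling of $H$. The triangle inequality then gives
\[
\int_0^\infty |F-H|\,dt \leq \|F-\alpha H(\alpha\,\cdot)\|_1 + \|\alpha H(\alpha\,\cdot)-H\|_1 \lesssim \tau^{-\omega}\eps^{Q(\tau)} + \tau^{-\omega/2-1}\eps^{Q(\tau)/2},
\]
together with the analogous bound for $\int_0^\infty|G-H|\,dt$. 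Since the second term dominates for small $\eps$, this produces the claimed estimate \eqref{eq:levelset-difference}.

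The main obstacle is the quantitative bound on $|a-1|$: while the $n$-dimensional control-sup estimate is straightforward to derive, relating the abstract scaling $a$ returned by Corollary~\ref{thm:multiplicative} to the geometric ratio $\|f\|_\infty/\|g\|_\infty$ via the support-endpoint argument requires a careful analysis of the behavior of $H$ near $\|h\|_\infty$, and it is precisely the decay rate of $H$ at the endpoint (combined with the monotonicity of $F,G,H$) that accounts for the loss of a square root in the $\eps$-exponent relative to Corollary~\ref{thm:multiplicative}.
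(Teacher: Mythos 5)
Your overall skeleton --- apply Corollary~\ref{thm:multiplicative} to $(F,G,H)$, show that the rescaling constant it produces is close to $1$, and finish with a triangle inequality using the monotonicity of $F,G,H$ --- is the same as the paper's, and the first and last steps are fine. The gap is the middle step, the quantitative bound on the scaling constant, and the mechanism you propose for it does not work. The constant produced by Corollary~\ref{thm:multiplicative} is $e^{\eta}$ where $\eta$ is the \emph{uncontrolled} translation coming from the one-dimensional theorem, so bounding it requires a genuinely new input from the $n$-dimensional data. Your ``support-endpoint matching'' cannot supply this: $L^1$-closeness of two nonincreasing functions gives no control on the right endpoints of their supports, because the distribution functions $F,G,H$ may be arbitrarily small near the top of their range. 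A thin spike of $f$ (or of $h$) of negligible measure moves $\|f\|_\infty$ (or $\|h\|_\infty$) arbitrarily while perturbing $F$ (or $H$) only slightly in $L^1$; in particular $\|h\|_\infty$ is essentially unconstrained by \eqref{eq:almost-eq}, since the extra mass of a spike can be far below $\eps$. Hence the claimed endpoint agreement ``up to $(\tau^{-\omega}\eps^{Q(\tau)})^{1/2}$'' has no justification, and the square root you invoke does not come from endpoint decay. (Also, the auxiliary $n$-dimensional analog of Lemma~\ref{thm:control-sup} is less immediate than you suggest: the one-dimensional proof uses the additive Brunn--Minkowski inequality under the integral, and in $\R^n$ the corresponding step would require a lower bound of $\int F^{1-\lambda}G^{\lambda}$ by $\bigl(\int F\bigr)^{1-\lambda}\bigl(\int G\bigr)^{\lambda}$ after rescaling, a reverse-H\"older-type inequality that is false in general and must again be extracted from the PL structure.)

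What the paper does instead is the missing idea. It sets $\widetilde A_t = b^{\lambda/n}A_{b^{\lambda}t}$ and $\widetilde B_t = b^{-(1-\lambda)/n}B_{b^{-(1-\lambda)}t}$, so that $|\widetilde A_t|=b^{\lambda}F(b^{\lambda}t)$, $|\widetilde B_t|=b^{-(1-\lambda)}G(b^{-(1-\lambda)}t)$ and $(1-\lambda)b^{-\lambda/n}\widetilde A_t+\lambda b^{(1-\lambda)/n}\widetilde B_t\subset C_t$. Applying the $n$-dimensional Brunn--Minkowski inequality and the AM--GM stability \eqref{Holderstab} to this containment gives, on the set $I$ of levels where $|\widetilde A_t|$ and $|\widetilde B_t|$ are comparable to $H(t)$ (a set carrying at least half of the mass of $H$, by \eqref{tildePhiH}), a pointwise deficit of order $\frac{\tau}{8n}\bigl(\sqrt b-\tfrac1{\sqrt b}\bigr)^2 H(t)$ up to errors controlled by $|\,|\widetilde A_t|/H(t)-1|+|\,|\widetilde B_t|/H(t)-1|$. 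Integrating in $t$ and using \eqref{tildePhiH} yields $(\sqrt b-1)^2\lesssim n\,\tau^{-1}a(\tau,\eps)$, i.e.\ $b\le 1+50\,n^{1/2}\tau^{-1/2}a(\tau,\eps)^{1/2}$; this Brunn--Minkowski deficit argument, not endpoint decay, is the true source of the square-root loss in the $\eps$-exponent. To complete your proof you would need to replace the endpoint-matching step by an argument of this kind; the rest of your outline then goes through essentially as in the paper.
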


\begin{proof} In what follows, we let, in analogy to the notation employed in sections~\ref{sec:tail-estimates},~\ref{sec:rearranged-version} and~\ref{sec:general-case}, 
\begin{equation*}\begin{split}
\{ x \in \R^n \colon f(x) > t\} &= A_t, \cr 
\{ x \in \R^n \colon g(x) > t\} &= B_t, \cr 
\{ x \in \R^n \colon h(x) > t\} &= C_t \cr
\end{split}\end{equation*}
denote the level sets of $f,g,h$, respectively. Since $\|f\|_1= \|g\|_1 = 1,$  $\int_0^\infty H=\int_{\R^n}  h\leq 1+\varepsilon$,
it follows from Corollary~\ref{thm:multiplicative} that
there exists some $b>0$ such that
\begin{equation}
\label{FHGH}
\int_0^\infty|b^{\lambda} F(b^{\lambda}t)-H(t)|\,dt+\int_0^\infty|b^{-(1-\lambda)}G(b^{-(1-\lambda)}t)-H(t)|\,dt \leq  a(\tau,\eps),
\end{equation}
where we denote $a(\tau,\eps) = c\tau^{-\omega} e^{Q(\tau)}.$ We may assume, without loss of generality, that $b\geq 1$.

For $t>0$, let
\begin{eqnarray*}
\widetilde{A}_t&=&
b^{\frac{\lambda}{n}}A_{b^{\lambda}t}
\mbox{ \ \ if $\widetilde{A}_t\neq\emptyset$} \\
\widetilde{B}_t&=&
b^{\-\frac{-(1-\lambda)}{n}}B_{b^{-(1-\lambda)}t} 
\mbox{ \ \ if $\widetilde{B}_t\neq\emptyset$}.
\end{eqnarray*}
These sets satisfy $|\widetilde{A}_t|=b^{\lambda} F(b^{\lambda} t)$, $|\widetilde{B}_t|=b^{-(1-\lambda)}G(b^{-(1-\lambda)}t)$ and
\begin{equation}
\label{tildePhiH}
\int_0^\infty|\,|\widetilde{A}_t|-H(t)|\,dt+
\int_0^\infty|\,|\widetilde{B}_t|-H(t)|\,dt\leq  a(\tau,\eps).
\end{equation}
In addition, we also know from the Pr\'ekopa--Leindler condition that
\begin{equation}
\label{widetildeOmega}
(1-\lambda) b^{\frac{-\lambda}n}\widetilde{A}_t
+\lambda b^{\frac{1-\lambda}n}\widetilde{B}_t\subset C_t.
\end{equation}

We proceed to divide the positive line $[0,\infty)$ into two sets where the measures of $\tilde{A}_t, \tilde{B}_t$ are either both close to that of $H(t),$ and otherwise. Indeed, we write  $[0,+\infty) = I \cup J$,
where $t\in I$ if $\frac34\,H(t)<|\widetilde{A}_t|<\frac54\,H(t)$
and $\frac34\,H(t)<|\widetilde{B}_t|<\frac54\,H(t)$, and
$t\in J$ otherwise. 
For $J$, since $\eps < e^{-M_n(\tau)}$, 
\eqref{tildePhiH} yields 
\begin{equation}
\label{Jsize}
\int_J H(t)\,d t\leq 4\int_J\left(|\,|\widetilde{A}_t|-H(t)|+
|\,|\widetilde{B}_t|-H(t)|\right)\,dt
\leq 8a(\tau,\eps) <\frac12.
\end{equation}

Turning to $I$, it follows from the Pr\'ekopa-Leindler inequality
and \eqref{Jsize} that
\begin{equation}
\label{Isize}
\int_I H(t)\,dt\geq 1-\int_J H(t)\,dt>\frac12.
\end{equation}
For $t\in I$, we define
 $\alpha(t)=|\widetilde{A}_t|/H(t)$ and
$\beta(t)=|\widetilde{B}_t|/H(t)$, and hence
$\frac34<\alpha(t),\beta(t)<\frac54$, and
\eqref{tildePhiH}  implies
\begin{equation}
\label{alphabeta}
\int_0^\infty H(t)\cdot\left(|\alpha(t)-1|+|\beta(t)-1|\right)\,dt\leq 2a(\tau,\eps).
\end{equation}

We then proceed by estimating, by the Brunn--Minkowski inequality, 
\begin{eqnarray}
\nonumber
H(t)&\geq& \left((1-\lambda) |A_{b^{\lambda} t}|^{\frac1n}+\lambda |B_{b^{\lambda -1}t}|^{\frac1n}\right)^n
=\left((1-\lambda)b^{\frac{-\lambda}n}|\widetilde{A}_{t}|^{\frac1n}+ \lambda
b^{\frac{1-\lambda}n}|\widetilde{B}_{t}|^{\frac1n}\right)^n\\
\nonumber
&=& |\widetilde{A}_{t}|^{1-\lambda} \cdot |\widetilde{B}_{t}|^{\lambda} 
\left( (1-\lambda) b^{-\frac{\lambda}n} \frac{|\widetilde{A}_t|^{\frac{\lambda}n}}{|\widetilde{B}_t|^{\frac{\lambda}n}} + \lambda b^{\frac{1-\lambda}n} \frac{|\widetilde{B}_t|^{\frac{1-\lambda}{n}}}{|\widetilde{A}_t|^{\frac{1-\lambda}{n}}} \right)^n \\
\label{Halphabetaeta}
&= &H(t)\cdot \alpha(t)^{1-\lambda} \cdot \beta(t)^{\lambda} \left( (1-\lambda) \gamma^{\frac{\lambda}{n}} + \lambda \gamma^{-\frac{1-\lambda}{n}} \right)^n,
\end{eqnarray}
where we let $\gamma = \frac{|\widetilde{A}_t|}{b |\widetilde{B}_t|}.$ Then \eqref{Holderstab} yields 
\[
(1-\lambda)\gamma^{\frac{\lambda}n} + \lambda \gamma^{-\frac{1-\lambda}{n}} \ge 1 + \tau \left( \gamma^{\frac{\lambda}{2n}} - \gamma^{-\frac{1-\lambda}{2n}}\right)^2 \ge 1 + \tau \left(\gamma^{\frac{1}{4n}} - \gamma^{-\frac{1}{4n}}\right)^2.
\]
We now note that for $s\geq 1$, we have 
$$
s^{\frac1{4n}}-s^{-\frac{1}{4n}}=s^{-\frac{1}{4n}}(s^{\frac1{2n}}-1)\geq
s^{-\frac{1}{4n}}\cdot \frac{s^{\frac1{2n}-1}}{2n}(s-1)\geq \frac1{2n}\left(s-\frac1{s}\right),
$$
and thus \eqref{Halphabetaeta} implies 
\begin{equation}\label{eq:followup} 
H(t) \geq H(t)\cdot \alpha(t)^{1-\lambda} \cdot \beta(t)^{\lambda} \left( 1+ \frac{\tau}{4n} \left(\gamma - \gamma^{-1}\right)^2 \right). 
\end{equation}
We claim that if $t\in I$, then
\begin{equation}
\label{alphabetaetabclaim}
\alpha(t)^{1-\lambda} \cdot \beta(t)^{\lambda} \left( 1+ \frac{\tau}{4n} \left(\gamma - \gamma^{-1}\right)^2 \right) \geq 
1-2|\alpha(t)-1|-2|\beta(t)-1|+
\tau \frac{(\sqrt{b}-1)^2}{8n\cdot b}.
\end{equation}
Since
$\alpha(t)^{1-\lambda} \cdot \beta(t)^{\lambda} \geq 1-|\alpha(t)-1|-|\beta(t)-1|$,
\eqref{alphabetaetabclaim} readily holds if $|\alpha(t)-1|+|\beta(t)-1|\geq \frac{(\sqrt{b}-1)^2}{16n\cdot b}$.
Therefore we may assume that
\begin{equation}
\label{alphabetaetabclaimcond}
|\alpha(t)-1|+|\beta(t)-1|\leq
\frac{(\sqrt{b}-1)^2}{16n\cdot b}<\frac12,
\end{equation}
which condition in turn yields that
\begin{equation}
\label{balphabetalow}
\frac{b\beta(t)}{\alpha(t)}\geq
\frac{b\left(1-\frac{(\sqrt{b}-1)^2}{16n^2\cdot b}\right)}{1+\frac{(\sqrt{b}-1)^2}{16n\cdot b}}\geq
b\left(1-2\cdot \frac{(\sqrt{b}-1)^2}{32n\cdot b}\right)\geq
b\left(1-\frac{\sqrt{b}-1}{\sqrt{b}}\right)=\sqrt{b}.
\end{equation}
We deduce first applying  \eqref{alphabetaetabclaimcond}, and then \eqref{balphabetalow} and the fact that $\gamma = \frac{\alpha(t)}{b \beta(t)}$, that
\begin{eqnarray*}
\alpha(t)^{1-\lambda} \cdot \beta(t)^{\lambda} \left( 1+ \frac{\tau}{4n} \left(\gamma - \gamma^{-1}\right)^2 \right)&\geq&
(1-|\alpha(t)-1|-|\beta(t)-1|)\left( 1+ \frac{\tau}{4n} \left(\gamma - \gamma^{-1}\right)^2 \right)\\
&\geq & 1-|\alpha(t)-1|-|\beta(t)-1|+\frac{\tau}{8n} \left(\gamma - \gamma^{-1}\right)^2\\
&\geq & 1-|\alpha(t)-1|-|\beta(t)-1|+\frac{\tau}{8n}\left(\sqrt{b}-\frac1{\sqrt{b}} \right)^2,
\end{eqnarray*}
proving \eqref{alphabetaetabclaim} also under the assumption \eqref{alphabetaetabclaimcond}, as well.

It follows first from \eqref{Isize}, after that from 
\eqref{Halphabetaeta} and \eqref{alphabetaetabclaim} and finally from \eqref{alphabeta} that
\begin{eqnarray*}
\frac{(\sqrt{b}-1)^2}{16n\cdot b}&\leq&\int_I H(t)\cdot \frac{(\sqrt{b}-1)^2}{8n\cdot b}\,dt\leq
\frac{1}{\tau} \int_I H(t)\cdot (2|\alpha(t)-1|+2|\beta(t)-1|)\,dt\\
&\leq & \frac{4 a(\tau,\eps)}{\tau}.
\end{eqnarray*}
Since $\eps < e^{-M_n(\tau)}$, we deduce that $b<2$; therefore, one easily deduces that
\begin{equation}
\label{best}
b\leq 1+50n^{\frac12}\tau^{-\frac12}a(\tau,\eps)^{\frac12}. 
\end{equation}

Next we claim that
\begin{equation}
\label{PhitildePhi}
\int_0^\infty\left|\,|A_t|-|\widetilde{A}_t|\,\right|\,dt+ \int_0^\infty\left|\,|B_t|-|\widetilde{B}_t|\,\right|\,dt\leq 200 n^{\frac12}\tau^{-\frac12}a(\tau,\eps)^{\frac12}. 
\end{equation}
Since $|A_{b^{\lambda}t}|\leq |A_t|$, we have
\begin{eqnarray*}
\int_0^\infty\left||A_t|-|\widetilde{A}_t|\right|\,dt&=&\int_0^\infty\left||A_t|-b^{\lambda}|A_{b^{\lambda}t}|\right|\,dt\\
&\leq &\int_0^\infty\left||A_t|-b^{\lambda}|A_{t}|\right|\,dt+b^{\lambda} \int_0^\infty\left||A_t|-|A_{b^{\lambda}t}|\right|\,dt\\
&= & (b^{\lambda}-1)+
b^{\lambda} \int_0^\infty\left(|A_t|-|A_{b^{\lambda}t}|\right)\,dt\\
&=& 2(b^{\lambda}-1)\leq 100 \lambda 2^{\lambda-1} n^{\frac12}\tau^{-\frac12}a(\tau,\eps)^{\frac12} \le 100 n^{\frac12}\tau^{-\frac12}a(\tau,\eps)^{\frac12}. 
\end{eqnarray*}
Similarly, $|B_t|\leq |B_{b^{\lambda-1}t}|$, and hence
\begin{eqnarray*}
\int_0^\infty\left||B_t|-|\widetilde{B}_t|\right|\,dt&=&\int_0^\infty\left||B_t|-b^{\lambda-1}|B_{b^{\lambda-1}t}|\right|\,dt\\
&\leq &\int_0^\infty\left||B_t|-b^{\lambda-1}|B_{t}|\right|\,dt+
b^{\lambda-1} \int_0^\infty\left||B_t|-|B_{b^{\lambda-1}t}|\right|\,dt\\
&= & (1-b^{\lambda-1})+
b^{\lambda-1}\int_0^\infty\left(|B_{b^{\lambda-1}t}|-|B_t|\right)\,dt\\
&=& 2(1-b^{\lambda-1})\leq 100 n^{\frac12}\tau^{-\frac12}a(\tau,\eps)^{\frac12},
\end{eqnarray*}
proving \eqref{PhitildePhi}. We conclude the proof by combining \eqref{tildePhiH} and \eqref{PhitildePhi}.
\end{proof}

As a by-product of Lemma~\ref{thm:level-set-measure-close}, notice that, by setting $\min(\|f\|_{\infty},\|g\|_{\infty}) = \|f\|_{\infty} = 2,$ then 
\[
\tau^{-\frac12}a(\tau,\eps)^{\frac12} \gtrsim_n \int_2^{\max{\|g\|_{\infty},\|h\|_{\infty}}} \left( G(t) + H(t) \right) \, d t.
\]
In particular, we know that 
\begin{equation}\label{eq:level-set-high-d-1}
C_t \supset (1-\lambda)A_t + \lambda B_t
\end{equation}
whenever $t \in (0,2).$ We claim, before proceeding with the proof, that under such conditions, 
\begin{equation}\label{eq:bound-size-g}
\|g\|_{\infty} \le \frac{2e\cdot 3^{n+1}}{\tau^{n+1}}.
\end{equation} Indeed, if $y_0 \in \R^n$ is fixed, we have 
\[
C_t \supset (1-\lambda) A_{t^{\frac{1}{1-\lambda}}/g(y_0)^{\frac{\lambda}{1-\lambda}}} + \lambda y_0.
\]
In particular, 
\begin{equation*}\begin{split}
\int_0^{t} F(s) \, ds & = \frac{1}{1-\lambda} \int_0^{t^{1-\lambda} g(y_0)^{\lambda}} F\left( \frac{r^{1/(1-\lambda)}}{g(y_0)^{\lambda/(1-\lambda)}}\right) \left( \frac{r}{g(y_0)} \right)^{\lambda/(1-\lambda)} \, dr \cr
 & \le \frac{1}{1-\lambda} \left( \frac{t}{g(y_0)} \right)^{\lambda} \int_0^{t^{1-\lambda} g(y_0)^{\lambda}} F\left( \frac{r^{1/(1-\lambda)}}{g(y_0)^{\lambda/(1-\lambda)}}\right) \, dr\cr
 & \le \frac{1}{(1-\lambda)^{n+1}} \left( \frac{t}{g(y_0)} \right)^{\lambda} \int_0^{t^{1-\lambda} g(y_0)^{\lambda}}  H(r) \, dr.
\end{split}\end{equation*}
Therefore, by picking $t = 2$ and using that $\int H \le 1 + \varepsilon, \, \int_0^2 F(s) \, ds = 1,$
\[
g(y_0) \le \frac{2\cdot (1+\eps)^{1/\lambda} }{(1-\lambda)^{(n+1)/\lambda}}.
\] 
A quick analysis shows that, for $\lambda \in (0,1),$ the inequality 
$$
(1-\lambda)^{1/\lambda} \ge \frac{1}{3} (1-\lambda)
$$
holds. If $\eps < \tau,$ then the numerator is at most $2e,$ and thus, as $y_0$ was arbitrary above, we conclude the claim. Using now \eqref{eq:level-set-high-d-1}, we get  
\[
H(t) \ge \left((1-\lambda)F(t)^{1/n} + \lambda G(t)^{1/n}\right)^n \ge \frac{F(t) + G(t)}{2} - \frac{|F(t)-G(t)|}{2} \qquad \forall \, t \in (0,2).
\]
Notice also that, by Lemma~\ref{thm:level-set-measure-close} ,
\[
\int_0^{\infty} |F(t) - G(t)| \, d t \lesssim_n\tau^{-\frac12}a(\tau,\eps)^{\frac12}.
\]
Thus, by these considerations and the almost-optimality of $f,g,h$ for the Pr\'ekopa--Leindler inequality, we obtain 
\begin{equation}\label{eq:upper-bound-high-dim}
c_n\tau^{-\frac12}a(\tau,\eps)^{\frac12} \ge \int_0^{\alpha} \left( H(t) - \frac{F(t)+G(t)}{2} + \frac{|F(t) - G(t)|}{2}\right) \, d t \qquad \forall \, \alpha \ge 0. 
\end{equation}
On the other hand, notice that \eqref{eq:power-containment-levels} implies, together with a limiting argument and the Brunn--Minkowski inequality, 
$$H(t) \ge \max\left\{\left(\lambda G\left(t^{\frac{1}{\lambda}}\right)^{1/n} + (1-\lambda) F(1)^{1/n}\right)^n, \left((1-\lambda)F\left(t^{\frac{1}{1-\lambda}}\right)^{1/n} +\lambda G(1)^{1/n}\right)^n\right\},$$
for all $t \in (0,2)$ so that 
$H(t) > 0.$ Thus, \eqref{eq:upper-bound-high-dim} implies 
\begin{equation}\label{eq:iteration-high-dim}
c_n\tau^{-\frac12}a(\tau,\eps)^{\frac12}  \ge \int_0^{\alpha} \left( \frac{1}{2}\left( (1-\lambda)^n F\left(t^{\frac{1}{1-\lambda}}\right) + \lambda^n G\left(t^{\frac{1}{\lambda}}\right) \right) - \frac{F(t) + G(t)}{2}\right) \, d t. 
\end{equation}
We thus let, in analogy to Lemma~\ref{thm:cutting-support}, 
$$\Gamma(\alpha) = \int_0^{\alpha} ((1-\lambda)^n F(t) + \lambda^n G(t)) \, d t.$$
Again in analogy to Lemma \ref{thm:cutting-support}, we may suppose without loss of generality that $\lambda \le 1/2.$  Then \eqref{eq:iteration-high-dim} implies 
\[
\frac{1-\lambda}{2} \Gamma(\alpha^{\frac{1}{1-\lambda}})\alpha^{-\frac{\lambda}{1-\lambda}} \le c_n\tau^{-\frac12}a(\tau,\eps)^{\frac12}   + \frac{\Gamma(\alpha)}{2\tau^n}.
\]
As in the proof of Lemma~\ref{thm:cutting-support}, we let $\beta = \alpha^{\frac{1}{1-\lambda}}.$ We thus have 
$$
\frac{\Gamma(\beta)}{\beta} \le 2c_n \tau^{-\frac32}a(\tau,\eps)^{\frac12} \cdot \frac{1}{ \beta^{1-\lambda}} + \frac{1}{\tau^{n+1}} \frac{\Gamma(\beta^{1-\lambda})}{\beta^{1-\lambda}},
$$
and therefore 
$$
\frac{\Gamma(\beta)}{\beta} \le \left(2c_n \tau^{-\frac32}a(\tau,\eps)^{\frac12}  \sum_{i=1}^k\frac{(1/\tau^{n+1})^{i-1}}{\beta^{(1-\lambda)^i}}\right)
+(1/\tau^{n+1})^{k}\frac{\Gamma(\beta^{(1-\lambda)^k})}{\beta^{(1-\lambda)^k}}.
$$
We now select $k \in \N$ to be the first natural number such that $\beta^{(1-\lambda)^k} > e^{-1}.$ This implies that
$$
\Gamma(\beta) \lesssim  (1/\tau^{n+1})^k \left( 1 + c_n \frac{\sqrt{a(\tau,\eps)}}{\beta^{1-\lambda} \tau^{\frac32}} \right) \beta .
$$
If $\beta > \eps^{\frac{Q(\tau)}2},$ then the estimate above yields 
$$
\Gamma(\beta) \le c_n \tau^{-\frac{\omega + 3}{2}  } \beta |\log(\beta)|^{\frac{4(n+3) |\log \tau|}{\tau}}.
$$  
In particular, one concludes directly from the definition of $\Gamma$ that 
\begin{equation}\label{eq:control-levels-high-d}
F(\beta) + G(\beta) \le c_n \tau^{-\frac{\omega + 3+n}{2}} |\log\eps|^{\frac{4(n+3)|\log \tau|}{\tau}}, \qquad \forall \, \beta > \eps^{\frac{Q(\tau)}2}.
\end{equation}
We are now ready to give the proof of Theorem~\ref{PLstab} in dimensions $n \ge 2.$ For that, we use the shorthand $\rho_n(\tau) = \frac{4(n+10)|\log \tau|}{\tau}.$ 
\begin{proof}[Proof of Theorem~\ref{PLstab}, $n \ge 2$]
Let $\theta > 0$ be small, to be chosen later. Define the (truncated) log-hypographs of $f,g,h$ as 
\begin{equation*}\begin{split}
\mathcal{S}_f = \{ (x,T) \in \R^{n+1} \colon x \in \{ f > \eps^{\theta}\}, \, \eps^{\theta} \le e^T < f(x)\},\cr
\mathcal{S}_g = \{ (x,T) \in \R^{n+1} \colon x \in \{ g > \eps^{\theta}\}, \, \eps^{\theta} \le e^T < g(x)\},\cr
\mathcal{S}_h = \{ (x,T) \in \R^{n+1} \colon x \in \{ h > \eps^{\theta}\}, \, \eps^{\theta} \le e^T < h(x)\}. 
\end{split}\end{equation*}
We first claim that the measure of the two first of such sets is well-controlled. Indeed, it follows directly from the definition of such sets and \eqref{eq:control-levels-high-d} that, for $\theta < Q(\tau)/4,$
\begin{equation}\label{eq:upper-bound-S_f}
c_n \theta \tau^{-\frac{\omega+3+n}2} |\log\eps|^{\rho_n(\tau)} \ge  \theta |\log\eps| \cdot \H^n(\{f > \eps^{\theta}\}) \ge \H^{n+1}(\mathcal{S}_f).
\end{equation}
On the other hand, by a change of variables and the normalization chosen for $f,$ one obtains
\begin{equation}\label{eq:lower-bound-S_f}
\H^{n+1}(\mathcal{S}_f) = \int_{\theta \log \eps}^{\log \|f\|_{\infty}} F(e^s) \, ds > \frac{1}{2}.
\end{equation}
The same estimates together with \eqref{eq:bound-size-g} show that 
\begin{equation}\label{eq:lower-and-upper-S_g}
c_n \theta \tau^{-\frac{\omega+3+n}2} |\log\eps|^{\rho_n(\tau)} \ge \H^{n+1}(\mathcal{S}_g) > \frac{\tau^{(n+1)}}{2e \cdot 3^{n+1}}.
\end{equation}
holds as well. Employing Lemma~\ref{thm:level-set-measure-close}, we obtain that 

\begin{equation}\begin{split}\label{eq:close-volumes-high-d}
& |\H^{n+1}(\mathcal{S}_f) - \H^{n+1}(\mathcal{S}_h)| + |\H^{n+1}(\mathcal{S}_g) - \H^{n+1}(\mathcal{S}_h)| \cr 
& \le  \int_{\theta \log \eps}^{\infty} \left( |F(e^s) - H(e^s)| +|G(e^s) - H(e^s)| \right) \, ds \cr
& \le  \eps^{-\theta} \left( \int_0^{\infty} \left(|F(t) - H(t)| + |G(t) - H(t)|\right) \, ds \right) \cr 
& \le c_n \tau^{-\frac{\omega+3}{2}} \eps^{\frac{Q(\tau)}2 - \theta} =: \tau^n \cdot \delta(\eps,\tau,\theta).
\end{split}\end{equation}
We denote, until the end of the proof, $\delta = \delta(\eps,\tau,\theta)$ for shortness.  By \eqref{eq:condition}, we have
\begin{equation}\label{eq:Brunn-Minkowski-containment-graph} 
(1-\lambda) \mathcal{S}_f + \lambda \mathcal{S}_g \subset \mathcal{S}_h.
\end{equation}
In particular, \eqref{eq:close-volumes-high-d} and \eqref{eq:Brunn-Minkowski-containment-graph} imply that 
\begin{equation}\label{eq:lower-upper-s-h}
2c_n \tau^{-\frac{\omega+3+n}2} |\log\eps|^{\rho_n(\tau)} \ge \H^{n+1}(\mathcal{S}_h) \ge \frac{\tau^n}{2}.
\end{equation}
We are in position to use Theorem~\ref{BrunnMinkowskistab}. That result states that, under the conditions satisfied by the sets $\mathcal{S}_f, \mathcal{S}_g$ and $\mathcal{S}_h$ in \eqref{eq:upper-bound-S_f}, \eqref{eq:lower-bound-S_f}, \eqref{eq:lower-and-upper-S_g}, \eqref{eq:close-volumes-high-d} and \eqref{eq:Brunn-Minkowski-containment-graph}, then for $\delta < e^{-A_n(\tau)}$, the sets $\mathcal{S}_f,\mathcal{S}_g$ are both close (in quantitative terms of 
$\delta = \delta(\eps,\tau,\theta)$) to their convex hulls. Here, we let $A_n(\tau) = \frac{2^{3^{n+2}} n^{3^n} |\log \tau|^{3^n}}{\tau^{3^n}}$, in accordance to Theorem 1.3 in \cite{FigalliJerison}. 

In more effective terms, Theorem~\ref{BrunnMinkowskistab} implies that there exist an absolute constant $c_n > 0$ and an exponent $\gamma_n(\tau) = \frac{\tau^{3^n}}{2^{3^{n+1}} n^{3^n} |\log \tau|^{3^n}}$ such that the following holds. Denote the closure of the convex hull of $\mathcal{S}_f,\mathcal{S}_g, \mathcal{S}_h$ by $S_f,S_g,S_h$ respectively. There are $\tilde{w} = (w,\omega) \in \R^{n+1},$ and a convex set $\mathbb{S}_h \supset S_h$ with 
\begin{equation}\begin{split}\label{eq:brunn-minkowski-optimal-n+1}
\mathbb{S}_h &\supset (\mathcal{S}_f - \tilde{w}) \cup (\mathcal{S}_g + \tilde{w}),\cr
\H^{n+1}(S_h \setminus \mathcal{S}_h) + \H^{n+1}(S_f \setminus \mathcal{S}_f) & + \H^{n+1}(S_g \setminus \mathcal{S}_g) \le c_n \tau^{-N_n-\frac{\omega+3+n}2} |\log\eps|^{\rho_n(\tau)}\delta^{\gamma_n(\tau)},\cr
\H^{n+1}(\mathbb{S}_h \setminus \mathcal{S}_h) + \H^{n+1}(\mathbb{S}_h \setminus (\mathcal{S}_f - \tilde{w})) & + \H^{n+1}(\mathbb{S}_h \setminus (\mathcal{S}_g + \tilde{w})) \le c_n\tau^{-N_n-\frac{\omega+3+n}2} |\log\eps|^{\rho_n(\tau)} \delta^{\gamma_n(\tau)}.\cr
\end{split}\end{equation}
We thus use the shorthand $N_n' = N_n + \frac{\omega+3+n}{2}.$ Now \eqref{eq:brunn-minkowski-optimal-n+1} readily implies that $\H^{n+1}(\mathbb{S}_h \setminus S_h) \le 2c_n \tau^{-N_n'} |\log\eps|^{\rho_n(\tau)} \delta^{\gamma_n(\tau)},$ and thus 
\begin{equation}\label{eq:close-convex-sets}
\H^{n+1}(S_h \Delta (S_f - \tilde{w})) + \H^{n+1}(S_h \Delta (S_g - \tilde{w})) \le 6 c_n \tau^{-N_n'}|\log\eps|^{\rho_n(\tau)}  \delta^{\gamma_n(\tau)}.
\end{equation}
We now employ the analysis of \cite[Lemma~6.1]{BoroczkyDe}. Explicitly, suppose first $\tilde{w} = (w,\omega), \, \omega > 0.$ We let 
$$S_f^{\omega} = \{(x,T) \in S_f \colon \theta \log \eps \le T \le \theta \log \eps + \omega\}.$$ 
By convexity of $S_f,$ it follows that $\H^{n+1}(S_f \Delta (S_f + (0,\omega))) = 2 \H^{n+1}(S_f^{\omega}).$ But it also follows that 
$S_f^{\omega} \subset S_f \setminus (S_h+\tilde{w}),$ which, by \eqref{eq:brunn-minkowski-optimal-n+1} and \eqref{eq:close-convex-sets}, implies that 
$$\H^{n+1}(S_f^{\omega}) \le 6c_n \tau^{-N_n'} |\log\eps|^{\rho_n(\tau)}  \delta^{\gamma_n(\tau)}.$$
Thus, by triangle inequality, 
\begin{equation*}\begin{split}
\H^{n+1}(S_f \Delta (S_h + (w,0))) \le 2\H^{n+1}(S_f^{\omega}) + \H^{n+1}(S_f \Delta (S_h + \tilde{w})) \le 18 c_n \tau^{-N_n'} |\log\eps|^{\rho_n(\tau)}  \delta^{\gamma_n(\tau)}.
\end{split}\end{equation*}
A similar argument works in case $\omega < 0,$ if one considers $S^{|\omega|}_h$ instead of $S^{\omega}_f.$ In the end, this allows one 
to conclude that there is $w \in \R^n$ so that
\begin{equation}\label{eq:close-convex-sets-dim-n}
\H^{n+1}(S_h \Delta (S_f - w)) + \H^{n+1}(S_h \Delta (S_g + w)) \le 72 c_n \tau^{-N_n'} |\log\eps|^{\rho_n(\tau)}  \delta^{\gamma_n(\tau)}.
\end{equation}
We now note that, as  
$\{f > \eps^{\theta}\} \times \{ T = \theta \log \eps \} \subset \mathcal{S}_f,$ then 
\[
S_f \supset \text{co}(\{f > \eps^{\theta}\}) \times \{ T= \theta \log \eps \}.
\]
We associate to each $x \in \text{co}(\{f > \eps^{\theta}\})$ the function 
$$T_f(x) = \sup\{ T \in \R \colon (x,T) \in S_f\}.$$
This satisfies clearly $T_f(x) \ge \theta \log \eps, \forall x \in \text{co}(\{ f> \eps^{\theta}\}).$ We claim that this function is, moreover, concave. Indeed, if $(x,T_1),(y,T_2) \in S_f,$ by convexity of that set we get 
$$(tx+(1-t)y,tT_1 + (1-t)T_2) \in S_f.$$
Thus,
\begin{equation*}\begin{split} T_f(tx +(1-t)y) &= \sup\{ T \in \R \colon (tx + (1-t)y,T) \in S_f\}\cr
			       &\ge t \sup\{ T \in \R \colon (x,T_1) \in S_f\} + (1-t) \sup\{T \in \R \colon (y,T_2) \in S_f\} \cr 
			       & = t T_f(x) + (1-t)T_f(y), \, \forall \, t \in (0,1).\cr
\end{split}\end{equation*}
By definition of $S_f,$ it also follows that $T_f(x) \ge \log f(x), \, \forall x \in \text{co}(\{f > \eps^{\theta}\}).$ Let 
$$\tilde{f}(x) = \begin{cases}
		  e^{T_f(x)}, & \text{ if } x \in \text{co}(\{ f>\eps^{\theta}\}); \cr
		  0, & \text{ otherwise }.\cr
                 \end{cases}
$$
Now notice that $(x,r)$ belongs to the interior of $S_f$ if and only if $T_f(x) > r > \theta \log \eps$ and $x$ belongs to the interior of $\text{co}(\{ f > \eps^{\theta}\}).$ Writing $A(r) = \{ (x,T) \in A, T = r\}$ for horizontal slices of a set $A \subset \R^{n+1},$ we compute, by Fubini,
\begin{equation}\begin{split}\label{eq:almost-there-high-d-f}
\H^{n+1}(S_f \setminus \mathcal{S}_f) &= \int_{-\infty}^{\infty} \H^n(S_f(r) \setminus \mathcal{S}_f(r)) \, d r \cr 
				      &= \int_{\theta \log \eps}^{\log 2} \H^n(\{\log \tilde{f} > r\} \setminus \{ \log f > r\}) \, d r \cr
				      &= \int_{\eps^{\theta}}^2 \H^n(\{ \tilde{f} > s\} \Delta \{ f > s\}) \, \frac{ d s}{s} \cr
				      &\ge \frac{1}{2} \int_{\eps^{\theta}}^2 \H^n(\{ \tilde{f} > s\} \Delta \{ f > s\}) \, d s. \cr 
\end{split}\end{equation}
By Chebyshev's inequality and \eqref{eq:brunn-minkowski-optimal-n+1}, there is 
$$s_0 \in (\eps^{\theta},   \eps^{\theta} + c_n\tau^{-\frac{N_n'}2} \delta^{\frac{\gamma_n(\tau)}2})$$
$$ \text{ so that } \H^n(\{ \tilde{f} > s_0\} \Delta \{ f > s_0\}) \le  \tau^{-\frac{N_n'}2} |\log\eps|^{\rho_n(\tau)} \delta^{\frac{\gamma_n(\tau)}2}. $$ 
Recalling the definition of $\delta,$ one notices that, if $\frac{Q(\tau)}{4} > \theta,$ and $\eps <(c_n)^{-1} e^{\frac{2^{10} N_n \log (\tau)}{\gamma_n(\tau) Q(\tau)}} $ we may take $s_0 \in (\eps^{\theta}, 2 \eps^{\theta})$ so that 
\begin{equation}\label{eq:difference-levels-f-high-d}
\H^n(\{ \tilde{f} > s_0\} \Delta \{ f > s_0\}) \lesssim \tau^{-N_n^{'}/2} |\log\eps|^{\rho_n(\tau)} \eps^{\frac{\gamma_n(\tau) Q(\tau)}{8}}.
\end{equation}
Define then the function $\tilde{f_1}$ to be zero whenever $\tilde{f} \le s_0,$ and equal to $\tilde{f}$ otherwise. This new function is again log-concave.

We claim that this new function is still sufficiently close to $f.$ Indeed, by gathering \eqref{eq:almost-there-high-d-f}, \eqref{eq:difference-levels-f-high-d} and  \eqref{eq:control-levels-high-d}, we have
\begin{equation}\begin{split}\label{eq:final-high-dim-f}
\| \tilde{f_1} - f\|_1 &= \int_0^2 \H^n(\{ \tilde{f_1} > t\} \Delta \{ f > t\}) \, d t \cr
		     &\le \int_0^{s_0} \left(\H^n(\{ \tilde{f_1} > s_0\}) + \H^n(\{ f > t\})\right) \, d t 
		     + \int_{s_0}^2  \H^n(\{ \tilde{f_1} > t\} \Delta \{ f > t\}) \, d t \cr 
		     &\le c_n \tau^{-\frac{\omega+3+n}2} \eps^{\theta} |\log\eps|^{\rho_n(\tau)} + \int_{s_0}^2 \H^n(\{ \tilde{f} > t\} \Delta \{ f > t\}) \, d t \cr 
		     &\le  c_n \tau^{-\frac{\omega+3+n}2} \eps^{\theta} |\log\eps|^{\rho_n(\tau)} + 2\H^{n+1}(S_f \setminus \mathcal{S}_f) \cr 
		     &\lesssim_n \tau^{-N_n^{'}} \eps^{\frac{\gamma_n(\tau)Q(\tau)}{16}}  |\log\eps|^{\rho_n(\tau)},
\end{split}\end{equation}
where we chose $\theta = \frac{\gamma_n(\tau)Q(\tau)}{16}.$ Fix this value, and thus the value of $\delta,$ for the rest of the proof. Such an inequality is evidently not restrictive to $f,$ and the same argument yields that there is a log-concave function $\tilde{g}_1$ so that 
\begin{equation}\label{eq:final-high-dim-g}
\| \tilde{g}_1 - g\|_1 \lesssim_n \tau^{-N_n^{'} - (n+1)} \eps^{\frac{\gamma_n(\tau)Q(\tau)}{16}}  |\log\eps|^{\rho_n(\tau)}. 
\end{equation}

In order to conclude, we only need to prove that both of $\tilde{f}_1,\tilde{g}_1$ are sufficiently close, after a translation, to a log-concave function $\tilde{h}_1.$  In order to prove that, one only needs to construct the function $\tilde{h}$ in entire analogy to what we did for $\tilde{f}, \tilde{g};$ that is, we let 
\[
T_h(x) = \sup\{T \in \R \colon (x,T) \in S_h\}.
\]
One readily verifies that this new function is, again, concave, and that the function 
\[
\tilde{h}(x) = \begin{cases} 
					e^{T_h(x)}, & \text{ if } x \in \text{co}(\{h > e^{\theta}\}); \cr 
					0, & \text{ otherwise,}
					\end{cases}
\]
is log-concave. Using \eqref{eq:close-convex-sets-dim-n} together with an argument similar to \eqref{eq:final-high-dim-f} implies that 
\begin{multline}\label{eq:comparison-to-same-function}
\H^{n+1}(S_h \Delta (S_f -w)) + \H^{n+1}(S_h \Delta (S_g + w)) \ge \\
 \int_{0}^{\|\tilde{h}_1\|_{\infty}} \left( \H^n(\{ \tilde{h} > s\} \Delta \{ \tilde{f}(\cdot + w) > s\}) +  \H^n(\{ \tilde{h} > s\} \Delta \{ \tilde{g} (\cdot - w)> s\})\right) \, \frac{ds}{s}.  \cr
\end{multline}
Notice now that $\|\tilde{f}_1\|_{\infty} =\|f\|_{\infty}, \|\tilde{g}_1\|_{\infty} =\|g\|_{\infty},$ by construction. The idea is then to truncate from below at height $\{\tilde{h} > s_0\}$ and from above at height $\varrho := \max(\|\tilde{f}_1\|_{\infty}, \|\tilde{g}_1\|_{\infty})$ in order to generate a new function, which is again log-concave by construction. Denote this new function by $\tilde{h}_1.$ Moreover, by \eqref{eq:comparison-to-same-function} in conjunction with \eqref{eq:bound-size-g}, we have 
\begin{equation}\begin{split}\label{eq:log-concave-close-high-d}
 & 2e\cdot 3^{n+1} \tau^{-n-1} c_n \tau^{-N_n^{'}} |\log\eps|^{\rho_n(\tau)}  \delta^{\gamma_n(\tau)}  \cr 
 & \ge \int_{s_0}^{\varrho} \left(\H^n(\{ \tilde{h}_1 > s\} \Delta \{ \tilde{f}_1(\cdot + w) > s\}) +  \H^n(\{ \tilde{h}_1 > s\} \Delta \{ \tilde{g}_1 (\cdot - w)> s\})\right) \, ds \cr 
 & = \int_{\R^n} \left( |\tilde{h}_1(x) - \tilde{f}_1(x+w)| + |\tilde{h}_1(x) - \tilde{g}_1(x-w)| \right) \, dx. \cr
\end{split}\end{equation}
Combining \eqref{eq:final-high-dim-f}, \eqref{eq:final-high-dim-g} and \eqref{eq:log-concave-close-high-d} implies that 
\[
\| \tilde{h}_1 (\cdot - w) - f \|_1 + \| \tilde{h}_1 (\cdot + w) - g\|_1 \lesssim_n \tau^{-N_n' - n - 1 } |\log \eps|^{\rho_n(\tau)} \eps^{\frac{\gamma_n(\tau) Q(\tau)}{16}}.
\]

 Finally, in order to prove that $h$ is close to $\tilde{h}_1,$ we estimate 
\begin{equation}\begin{split}\label{eq:integral-layers-h}
\int_{\R^n}  |h(x) - \tilde{h}_1(x)| \, dx &= \int_0^{s_0} \H^n (\{ h > s\}) \, ds  \cr
			    	& \qquad+ \int_{s_0}^{\varrho} \H^n(\{ h > s\} \Delta \{ \tilde{h}> s\}) \, ds + \int_{\varrho}^{\infty} \H^n (\{ h > s\}) \, ds \cr 
			    	& \le c_n \tau^{-\frac{\omega+3+n}2}  \eps^{Q(\tau)\gamma_n(\tau)/16} |\log\eps|^{\rho_n(\tau)} + \int_{s_0}^{\varrho} \H^n(\{ h > s\} \Delta \{ \tilde{h}_1 > s\}) \, ds \cr 
			    	& + c_n \tau^{-\omega/2} \eps^{\frac{Q(\tau)}2}, 
\end{split}\end{equation}
where we used both \eqref{eq:control-levels-high-d} and Lemma~\ref{thm:level-set-measure-close} in the last line. In order to deal with the middle term, we remark that an anrgument entirely analogous to that of \eqref{eq:almost-there-high-d-f} implies that 
\[
\H^n(S_h \setminus \mathcal{S}_h) \ge \frac{1}{\varrho} \int_{s_0}^{\varrho}\H^n(\{ h > s\} \Delta \{ \tilde{h}> s\}) \, ds,
\]
which on the other hand implies 
\begin{equation}\label{eq:middle-term-h}
\int_{s_0}^{\varrho}\H^n(\{ h > s\} \Delta \{ \tilde{h}_1> s\}) \, ds \lesssim_n \tau^{-n-1} \tau^{-N_n^{'}} \eps^{\gamma_n Q(\tau)/16} |\log\eps|^{\rho_n(\tau)}.
\end{equation}
Inserting \eqref{eq:middle-term-h} into \eqref{eq:integral-layers-h} implies 
\begin{equation}\label{eq:final-bound-h-high-d}
\|h-\tilde{h}_1\|_1 \lesssim_n \tau^{-N_n^{'} - (n+1)} \eps^{\frac{\gamma_n(\tau) Q(\tau)}{16}}|\log\eps|^{\rho_n(\tau)}.
\end{equation}
Finally, in order to arrive at the statement of Theorem \ref{PLstab}, we notice that the expression on the right-hand side of \eqref{eq:final-bound-h-high-d} may be bounded by $c_n \tau^{-N_n' - n-1} \eps^{\frac{\gamma_n(\tau) Q(\tau)}{32}},$ as long as $\eps < e^{-c_n \frac{|\log \tau| \rho_n(\tau)^2}{Q_n(\tau)^2}},$ for $c_n \gg 1$ sufficiently large absolute constant.

An inspection of the constants needed for the proof above allows us conclude that Theorem~\ref{PLstab} holds with $\Sigma_n = N_n + \frac{\omega+3+n}{2} + (n+1),$ as $\tau^{\gamma_n(\tau)}$ is bounded by an explicitly computable absolute constant $\tilde{C}_n$ whenever $\tau \in [0,1].$ We also conclude that we may take $Q_n(\tau) = \frac{Q(\tau) \gamma_n(\tau)}{16},$ and  the result holds whenever $\eps < c_n e^{-M_n(\tau)},$ where $c_n > 0$ is an explicitly computable absolute constant, and one may take 
\[
M_n(\tau) = c_n |\log(\tau)| \max\left\{ \frac{A_n(\tau)}{Q(\tau)}, \frac{ \rho_n(\tau)^2}{Q_n(\tau)^2} \right\},
\]
for $c_n > 0$ a sufficiently large absolute constant, depending only on the dimension $n \ge 2.$ This finishes the proof of the higher-dimensional case, and thus also of Theorem \ref{PLstab}. 
\end{proof}

\end{document}